\documentclass[a4paper]{article}

%
\usepackage{a4wide}
\usepackage{amsthm,amsmath,amsfonts,amssymb,bbm,mathrsfs}
\usepackage{authblk}
\usepackage{graphicx}
\usepackage{caption}
\usepackage{cite}
\usepackage{enumitem, upref}
\usepackage{subcaption}
\usepackage{booktabs}
\usepackage[english]{babel}
\usepackage{hyperref} 
\usepackage{multirow,tabularx}
\newtheorem{theorem}{Theorem}

\newtheorem{condition}[theorem]{Condition}

\newtheorem{lemma}[theorem]{Lemma}

\theoremstyle{definition}
\newtheorem{algorithm}{Algorithm}
\newtheorem{example}[theorem]{Example}
\theoremstyle{remark}
\newtheorem{remark}{Remark}


\numberwithin{equation}{section}
\theoremstyle{plain}

\setenumerate{label={\normalfont{(\roman*)}}} 


\newcommand{\sss}[1]{\scriptscriptstyle{#1}}
\newcommand{\inprob}{\overset{\sss{\Prob}}{\longrightarrow}}
\newcommand{\indist}{\overset{d}{\longrightarrow}}
\newcommand{\Prob}{\mathbb{P}}
\newcommand{\Ex}[1]{\mathbb{E}[#1]}
\newcommand\abs[1]{\left|#1\right|}
\newcommand{\prc}{{\sss{(\pi)}}}
\newcommand{\prcn}{{\sss{(\pi_n)}}}
\newcommand{\hie}{{\sss{(\mathrm{H})}}}
\newcommand{\ltd}{\ell^2_{\sss{\downarrow}}}
\newcommand{\me}{\textup{e}}
\newcommand\ind[1]{\mathbbm{1}_{\{#1\}}}

%
%

	\title{Mesoscopic scales in hierarchical configuration models}
	\author{Remco van der Hofstad, Johan S.H. van Leeuwaarden, Clara Stegehuis}
	\affil{Eindhoven University of Technology, Department of Mathematics and Computer Science, P.O. Box 513, 5600 MB Eindhoven, The Netherlands}
	
	\begin{document}
		\maketitle
	\begin{abstract}
		To understand mesoscopic scaling in networks, we study the hierarchical configuration model (HCM), a random graph model with community structure. The connections between the communities are formed as in a configuration model.
		We study the component sizes of the hierarchical configuration model at criticality when the inter-community degrees have a finite third moment. We find the conditions on the community sizes such that the critical component sizes of the HCM behave similarly as in the configuration model. Furthermore, we study critical bond percolation on the HCM.
		We show that the ordered components of a critical HCM on $N$ vertices are of sizes $O(N^{2/3})$. More specifically, the rescaled component sizes converge to the excursions of a Brownian motion with parabolic drift, as for the scaling limit for the configuration model under a finite third moment condition.
	\end{abstract}

\section{Introduction}
Random graphs serve as basic models for networked structures that occur in many sciences, including physics, chemistry, biology, and the social sciences. In these networked structures, it is common to distinguish between two levels, referred to as ``microscopic'' and ``macroscopic''. Vertex degrees and edges between vertices provide a microscopic description, whereas most network functionalities require a macroscopic picture.  Random graph models are typically defined at the microscopic level, in terms of degree distributions and edge probabilities, leading to a collection of local probabilistic rules. This provides  a mathematical handle to characterize the macroscopic network functionality related to global characteristics such as connectivity, vulnerability and information spreading.

Intermediate or ``mesoscopic'' levels are less commonly considered in random graph models and network theory at large, and apply to substructures between the vertex and network levels. Mesoscopic levels are however becoming increasingly in focus, for example because of community structures or hidden underlying hierarchies, common features of many real-world networks. It is not easy to define what is precisely meant with mesoscopic, apart from the obvious definition of something between microscopic and macroscopic. This paper deals with large-network limits, in which the network size $N$ (number of vertices) will tend to infinity. The mesoscopic scale then naturally refers to structures of size $N^\alpha$, where it remains to be determined what values of $\alpha$ need to be considered.

We will associate the mesoscopic level with the community structure, defined as the collection of subgraphs with dense connections within themselves and sparser ones between them. Once the number and sizes of the network communities are identified, not only the community sizes are mescoscopic characteristics, but also the connectivity between communities and their internal organization. 

To investigate the mesoscopic scales, we shall work with the Hierarchical Configuration Model (HCM)~\cite{hofstad2015}, a random graph model that incorporates community structures. On the macroscopic level, the HCM is a configuration model, but every vertex is then replaced by a community, which is a small connected graph. The
HCM crucially deviates from the classical random graph models,
which are all locally tree-like (contain few short cycles). Many real-world networks are not locally tree-like, and have a community structure, with many short cycles inside the communities. 
The HCM allows to study such networks with community structure. Where the classical configuration model (CM) can create a random graph for any given degree sequence, the HCM can generate random graphs with any given community structure. In~\cite{stegehuis2015, stegehuis2016} the HCM is compared to real-world networks, and found to describe real-world networks much better than the CM, while remaining analytically tractable.
As such, the CM model was found in~\cite{stegehuis2015, stegehuis2016} to be a better model for the mesoscopic scale than for the microscopic scale, where mesoscopic means that each vertex in the CM structure represents a community. The CM is a special case of the HCM with all communities of size one.

To better understand the mesoscopic scale, we shall study the HCM in the critical regime, when the random graphs is on the verge of having a giant connected component. This critical regime has been explored for wide classes of random graph models, including the CM.  Indeed, most random graph models undergo a transition in connectivity, a so-called \textit{phase transition}, as illustrated in Figure~\ref{fig:cmtrans}. The component sizes of random graphs at criticality were first investigated for Erd\H{o}s-R\'enyi random graphs~\cite{aldous1997,bhamidi2014a}, and more recently for inhomogeneous random graphs~\cite{bhamidi2010, turova2009} and for the CM~\cite{dhara2016, joseph2014,nachmias2010, riordan2012}. All these models were found to follow qualitatively similar scaling limits, and hence can be considered to be members of the same universality class.

\begin{figure}
	\centering
	\begin{subfigure}[t]{0.27\textwidth}
		\centering
		\includegraphics[width=\linewidth]{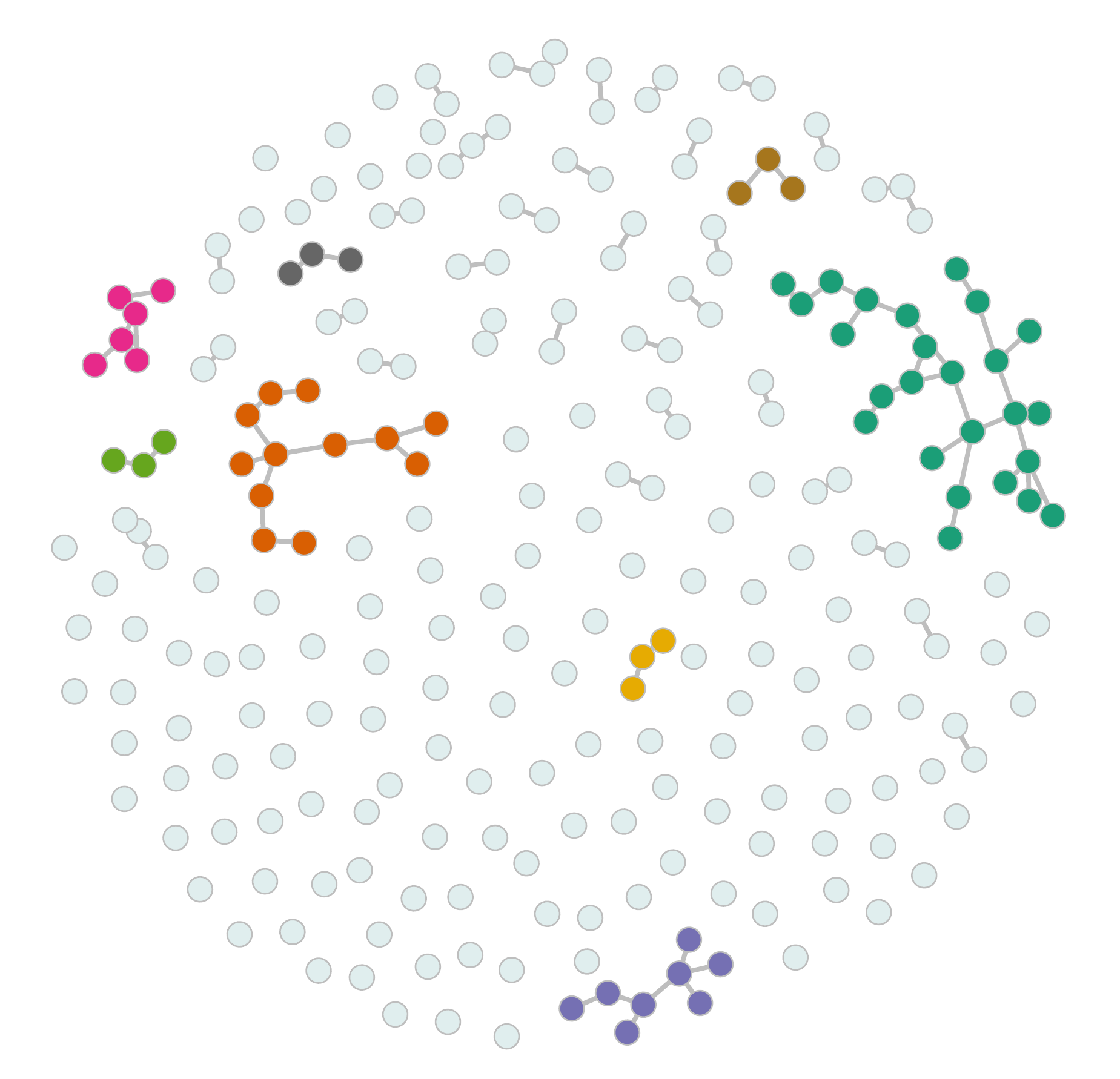}
		\caption{Subcritical} \label{fig:CMsub}
	\end{subfigure}
	\hspace*{\fill} 
	\begin{subfigure}[t]{0.27\textwidth}
		\centering
		\includegraphics[width=\linewidth]{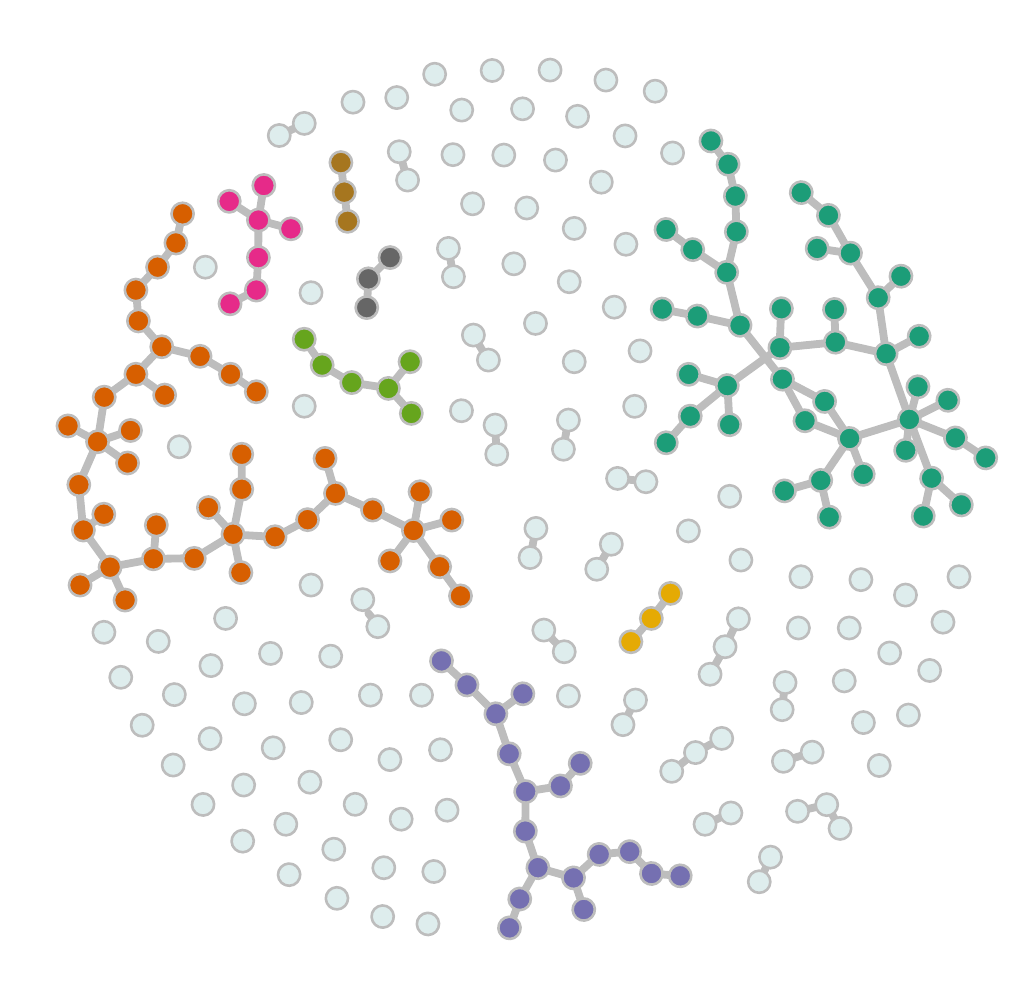}
		\caption{Critical} \label{fig:CMcrit}
	\end{subfigure}
	\hspace*{\fill} 
	\begin{subfigure}[t]{0.27\textwidth}
		\centering
		\includegraphics[width=\linewidth]{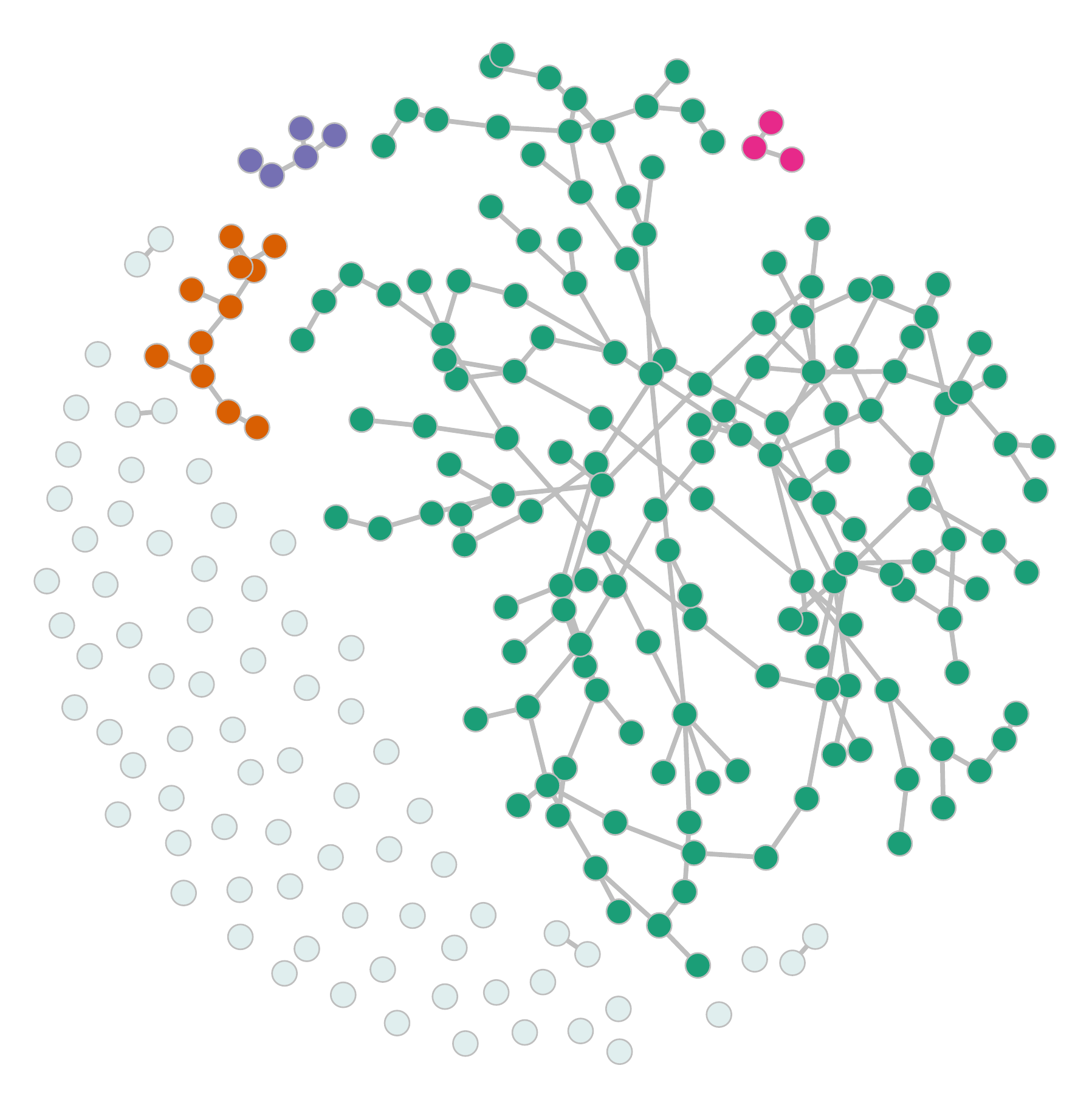}
		\caption{Supercritical} \label{fig:CMsuper}
	\end{subfigure}
	\caption{Phase transition for the component sizes in a configuration model}
	\label{fig:cmtrans}
\end{figure}

Taking the HCM as the null model for studying critical connectivity, we can  investigate the influence of the community structure. A relevant question is under what conditions the HCM will show the same scaling limit as in the classical random graph models and hence is a member of the same universality class.  An alternative formulation of the same question is to ask what the natural order of the mesoscopic scale should be, to influence or even alter the critical graph behavior. Our analysis shows that $\alpha=2/3$ is a strong indicator for the extent to which mesoscopic scales changes the global network picture. When communities are of size $n^{2/3}$ or smaller, the mesoscopic scales are small (yet not negligible), and the critical structures that arise are comparable to the structures encountered in the classical CM. When communities are potentially larger than $n^{2/3}$, the communities themselves start to alter the critical structures, and have the potential to entirely change the macroscopic picture.


We then proceed to study percolation on the HCM. We will exploit the fact that any supercritical HCM can be made critical by choosing a suitable percolation parameter. Therefore, we also study the scaling limits of the component sizes of a HCM under critical bond percolation. We show that under percolation, the community structure not only affects the component sizes, but also the width of the critical window. 

Our main results for the critical components, both before and after percolation, crucially depend on the mesoscopic scale of the community structure. We obtain the precise conditions (Conditions~\ref{cond:graph} and~\ref{cond:size}) under which the mesoscopic scale does not become dominant. These conditions describe the maximum order of the community sizes that can be sustained in order not to distort the picture generated by the CM. In other words, when the community sizes remain relatively small, the results proven for the CM remain valid, despite the fact that the locally tree-like assumption is violated. And equally important, the same conditions indicate when the community sizes become large enough for the mesoscopic scale to take over. In that case, the CM is not an appropriate model.

\paragraph*{Notation.}\label{sec:notation}
We use $\overset{d}\longrightarrow$ for convergence in distribution, and $\inprob $ for convergence in probability. We say that a sequence of events $\mathcal{E}_n$ happens with high probability (w.h.p.) if $\lim_{n\to\infty}\Prob(\mathcal{E}_n)=1$. We write $f(n)=o(g(n))$ if $\lim_{n\to\infty}f(n)/g(n)=0$, and $f(n)=O(g(n))$ if $|f(n)|/g(n)$ is uniformly bounded, where $(g(n))_{n\geq 1}$ is nonnegative. We say that $X_n=O_{\sss{\Prob}}(b_n)$ for a sequence of random variables $(X_n)_{n\geq 1}$ if $|X_n|/b_n$ is a tight sequence of random variables, and $X_n=o_{\sss{\Prob}}(b_n)$ if $X_n/b_n\inprob 0$.

\begin{figure}
	\centering
	\begin{subfigure}[t]{0.3\textwidth}
		\centering
		\includegraphics[width=0.95\linewidth]{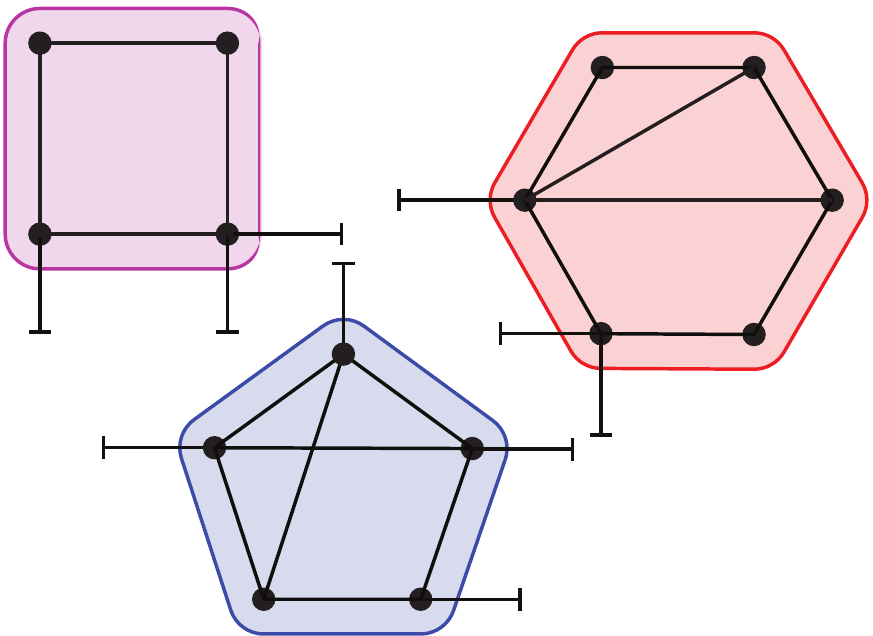}
		\caption{Communities} \label{fig:HCMstubs}
	\end{subfigure}
	\hspace*{0.5cm} 
	\begin{subfigure}[t]{0.3\textwidth}
		\centering
		\includegraphics[width=0.95\linewidth]{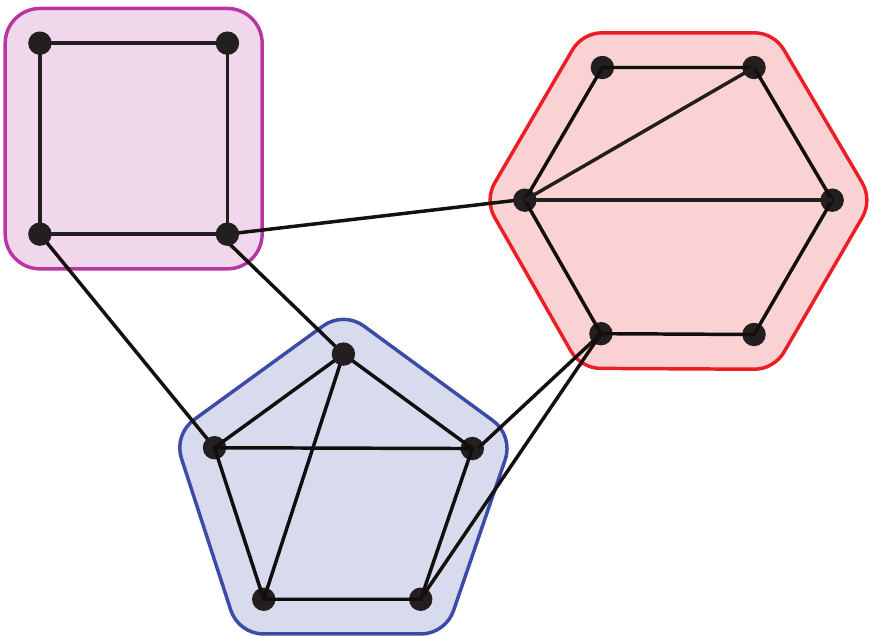}
		\caption{Possible result of HCM} \label{fig:comex}
	\end{subfigure}
	\caption{Illustration of HCM}
	\label{fig:HCM}
\end{figure}

\subsection{Detailed model description}\label{sec:model}

We now describe the HCM in more detail.
Consider a random graph $G$ with $n$ communities $(H_i)_{i\in[n]}$. A community $H$ is represented by $H=(F,\boldsymbol{d})$, where $F=(V_F,E_F)$ is a simple, connected graph and $\boldsymbol{d} = (d_v^{\sss{(b)}})_{v\in V_F}$ , where $d_v^{\sss{(b)}}$ is the number of edges from $v\in V_F$ to other communities. Thus $\boldsymbol{d}$ describes the degrees between the communities. We call $d_v^{\sss{(b)}}$ the \emph{inter-community degree} of a vertex $v$ in community $F$. A vertex inside a community also has an \emph{intra-community degree} $d_v^{\sss{(c)}}$: the number of edges from that vertex to other vertices in the same community. The sum of the intra-community and the inter-community degrees of a vertex is the degree of the vertex, i.e., $d_v=d_v^{\sss{(b)}}+d_v^{\sss{(c)}}$. Let $d_H=\sum_{v\in V_F}d_v^{\sss{(b)}}$ be the total number of edges out of community $H$. Then the (HCM) is formed in the following way. We start with $n$ communities. Every vertex $v$ has $d_b^{\sss{(b)}}$ half-edges attached to it, as shown in Figure~\ref{fig:HCMstubs}. These inter-community half-edges are paired uniformly at random. This results in a random graph $G$ with a community structure, as shown in Figure~\ref{fig:comex}. On the macroscopic level, $G$ is a configuration model with degrees $(d_{H_i})_{i\in[n]}$. We will need to use some assumptions on the parameters of our model. For this, we start by introducing some notation.

Let $H_n$ denote a uniformly chosen community in $[n]=\{1,2,\dots,n\}$. Furthermore, denote the number of communities of type $H$ in a graph with $n$ communities by $n^{\sss{(n)}}_H$. Then $n^{\sss{(n)}}_H/n$ is the fraction of communities that are of type $H$.
Let $D_n$ be the number of outgoing edges from a uniformly chosen community, i.e., $D_n=d_{H_n}=\sum_{v\in V_{F_n}}d_v^{\sss{(b)}}$. Let the size of community $i$ be denoted by $s_i$, and the size of a uniformly chosen community in $[n]$ by $S_n$. Then the total number of vertices in the graph is $N=\sum_{i=1}^ns_i=n\mathbb{E}[S_n]$. Let $S$ and $D$ be the limiting distributions of $S_n$ and $D_n$ respectively, as $n\to\infty$. We assume that the following conditions hold:

\begin{condition}[Community regularity]\label{cond:graph}
	\leavevmode
	\begin{enumerate}
		\item\label{cond:pn}
		$P_n(H)=n^{\sss{(n)}}_H/n\inprob P(H)$, where $P(H)$ is a probability distribution on labeled graphs of arbitrary size.
		\item\label{cond:S}
		$
		\lim_{n\to\infty}\mathbb{E}[S_n]=\mathbb{E}[S]<\infty.$
		\item \label{cond:EDS}
$\mathbb{E}[D_nS_n]\to\mathbb{E}[DS]<\infty$.
\item \label{cond:sd2}
$s_{\text{max}} = \max_{i\in[n]} s_i \ll\frac{n^{2/3}}{\log(n)}$.
	\end{enumerate}
\end{condition}

\begin{condition}[Inter-community connectivity]\label{cond:size}
\leavevmode
\begin{enumerate}
\item \label{cond:ED}
$
\lim_{n\to\infty}\mathbb{E}[D_n^3]=\mathbb{E}[D^3]<\infty.
$
\item \label{cond:PD}
$\Prob(D=0)<1$, $\Prob(D=1)\in(0,1)$.
\item \label{cond:nu}
$\nu_D^{\sss{(n)}}:=\frac{\mathbb{E}[D_n(D_n-1)]}{\mathbb{E}[D_n]}=1+\lambda n^{-1/3}+o(n^{-1/3}),$ for some $\lambda\in\mathbb{R}$.
\end{enumerate}
\end{condition}
\begin{remark}
	Condition~\ref{cond:graph}\ref{cond:pn} implies that $(F_n,\boldsymbol{d}_n)\overset{d}\longrightarrow (F,\boldsymbol{d})$, $D_n\overset{d}{\longrightarrow}D$ and $S_n\overset{d}{\longrightarrow}S$. The condition also implies that $d_{\max}=o(n^{1/3})$~\cite{hofstad2009}, where $d_{\max}$ is the maximal inter-community degree. Define
\begin{align}
p_{k,s}^{\sss{(n)}}&=\sum_{H=(F,\boldsymbol{d}):|F|=s,d_H=k}P_n(H),\\
p_{k,s}&=\sum_{H=(F,\boldsymbol{d}):|F|=s,d_H=k}P(H),
\end{align}
as the probabilities that a uniformly chosen community has size $s$ and inter-community degree $k$. Then Condition~\ref{cond:graph} implies that $p_{k,s}^{\sss{(n)}}\to p_{k,s}$ for every $(k,s)$. Furthermore, let $\ell_n$ denote the sum of the inter-community degrees of all half-edges, $\ell_n=\sum_{i\in[n]}d_{H_i}$.
\end{remark}

\subsection{Results on critical component sizes}\label{sec:critsize}

For a connected component of $G$, we can either count the number of communities in the component, or the number of vertices in it. We denote the number of communities in a component $\mathscr{C}$ by $v^\hie(\mathscr{C})$, and the number of communities with inter-community degree $k$ by $v_k^\hie(\mathscr{C})$. The number of vertices in component $\mathscr{C}$ is denoted by $v(\mathscr{C})$.
Define
\begin{equation}
\nu_D=\frac{\mathbb{E}[D(D-1)]}{\mathbb{E}[D]},
\end{equation}
where $D$ is the asymptotic community degree in Condition~\ref{cond:size}.
Let $p_k=\Prob(D=k)$.

Let $B^\mu_{\lambda,\eta}(t)$ denote Brownian motion with a parabolic drift~\cite{groeneboom1989, hofstad2010}: $B^\mu_{\lambda,\eta}(t)=\frac{\sqrt{\eta}}{\mu}B(t)+\lambda t-\frac{\eta t^2}{2\mu^3}$, where $B(t)$ is a standard Brownian motion.
Let $W^\lambda(t)$ be the reflected process of $B^\mu_{\lambda,\eta}(t)$, i.e.,
\begin{equation}\label{eq:W}
W^\lambda(t)=B^\mu_{\lambda,\eta}(t)-\min_{0\leq s\leq t}B^\mu_{\lambda,\eta}(s),
\end{equation}
and let $\boldsymbol{\gamma}^\lambda$ denote the vector of ordered excursion lengths of $W^\lambda$.
Choose $\mu=\mathbb{E}[D]$, $\eta=\mathbb{E}[D^3]\mathbb{E}[D]-\mathbb{E}[D^2]$.
Let $\mathscr{C}_{(j)}$ denote the $j$th largest component of a HCM, and $\mathscr{C}_{(j)}^{\sss{(\mathrm{CM})}}$ the $j$th largest component of the underlying CM (i.e.,$\mathscr{C}_{(j)}^{\sss{(\mathrm{CM})}}$ is the $j$-th largest component measured in terms of the number of communities).
Since the underlying CM satisfies Condition~\ref{cond:graph}, by~\cite{dhara2016},
\begin{equation}\label{eq:critconf}
n^{-2/3}\left(v^\hie(\mathscr{C}_{(j)}^{\sss{(\mathrm{CM})}})\right)_{j\geq 1}\to \boldsymbol{\gamma}^{\lambda}.
\end{equation}
Thus, the number of communities in the components of a HCM follows the same scaling limit as the configuration model, since the communities are connected as in a configuration model. 

The following theorem shows that the scaled component sizes of a HCM converge to a constant times $\boldsymbol{\gamma}^\lambda$ as well:
\begin{theorem}\label{thm:crit}
	Fix $\lambda\in \mathbb{R}$. For a hierarchical configuration model satisfying \textup{Conditions~\ref{cond:graph}} and~\ref{cond:size},
	\begin{equation}\label{eq:critsize}
	N^{-2/3}(v(\mathscr{C}_{(j)}))_{j\geq 1}\indist \Ex{S}^{-2/3}\frac{\Ex{DS}}{\Ex{S}} \boldsymbol{\gamma}^{\lambda},
	\end{equation}
	with respect to the product topology.
\end{theorem}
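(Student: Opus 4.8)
\emph{Strategy.} Reduce everything to the underlying configuration model on the $n$ communities. Since the only edges between communities are the paired inter-community half-edges, the vertex set of a connected component of the HCM is exactly the disjoint union of the communities forming the corresponding component of the underlying CM, so $v(\mathscr{C})=\sum_{H\in\mathscr{C}}s_H$ for a CM component $\mathscr{C}$. Put $c:=\Ex{DS}/\Ex{D}$ (the expected size of a community sampled with probability proportional to its inter-community degree). The plan is to show that, simultaneously for every fixed $j$,
\[
v(\mathscr{C}_{(j)}^{\sss{(\mathrm{CM})}})=c\,v^\hie(\mathscr{C}_{(j)}^{\sss{(\mathrm{CM})}})+o_{\sss{\Prob}}(n^{2/3}).
\]
As $c$ is independent of $j$ and the coordinates of $\boldsymbol{\gamma}^\lambda$ are a.s.\ distinct, this forces the ordering of components by number of vertices to agree w.h.p.\ (among the largest ones) with the ordering by number of communities, i.e.\ $\mathscr{C}_{(j)}=\mathscr{C}_{(j)}^{\sss{(\mathrm{CM})}}$ w.h.p.; combining this with \eqref{eq:critconf}, the identity $N=n\,\Ex{S_n}$, Condition~\ref{cond:graph}\ref{cond:S} and Slutsky's theorem gives
\[
N^{-2/3}v(\mathscr{C}_{(j)})=\Big(\tfrac{N}{n}\Big)^{-2/3}n^{-2/3}v(\mathscr{C}_{(j)}^{\sss{(\mathrm{CM})}})\indist \Ex{S}^{-2/3}\,\tfrac{\Ex{DS}}{\Ex{D}}\,\gamma_j^\lambda ,
\]
which is \eqref{eq:critsize}.

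\emph{Conditional mean.} The random pairing of inter-community half-edges uses only the degree sequence $(d_{H_i})_{i\in[n]}$ and ignores the sizes $s_i$ and the internal structures $F_i$; permuting communities within a fixed inter-community degree class is therefore a symmetry of the model. Hence, conditionally on the degree-labelled CM skeleton --- in particular on the component structure and on all the counts $v_k^\hie(\mathscr{C}_{(j)}^{\sss{(\mathrm{CM})}})$ --- the internal structures are assigned uniformly at random within each degree class, so $v(\mathscr{C}_{(j)}^{\sss{(\mathrm{CM})}})$ is distributed as a sum of draws without replacement with conditional mean $\sum_k v_k^\hie(\mathscr{C}_{(j)}^{\sss{(\mathrm{CM})}})\,\bar s_k$, where $\bar s_k=\Ex{S_n\mid D_n=k}$. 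A standard refinement of \eqref{eq:critconf}, obtained from the exploration-process analysis behind \cite{dhara2016}, is that the largest CM components carry the size-biased degree profile, $n^{-2/3}v_k^\hie(\mathscr{C}_{(j)}^{\sss{(\mathrm{CM})}})\indist (k\,p_k/\Ex{D})\,\gamma_j^\lambda$, jointly in $j,k$. Since $p_{k,s}^{\sss{(n)}}\to p_{k,s}$ yields $\bar s_k\to\Ex{S\mid D=k}$, and $D_nS_n$ is uniformly integrable (it converges in distribution to $DS$ while $\Ex{D_nS_n}\to\Ex{DS}<\infty$), a dominated-convergence argument over $k$ gives $n^{-2/3}\sum_k v_k^\hie(\mathscr{C}_{(j)}^{\sss{(\mathrm{CM})}})\,\bar s_k\indist\big(\sum_k\tfrac{kp_k}{\Ex{D}}\Ex{S\mid D=k}\big)\gamma_j^\lambda=c\,\gamma_j^\lambda$; so the conditional mean of $v(\mathscr{C}_{(j)}^{\sss{(\mathrm{CM})}})$ is $c\,v^\hie(\mathscr{C}_{(j)}^{\sss{(\mathrm{CM})}})+o_{\sss{\Prob}}(n^{2/3})$.

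\emph{Concentration.} Condition~\ref{cond:graph}\ref{cond:sd2} guarantees $s_{\max}=o(n^{2/3})$, so any single community contributes $o(n^{2/3})$ to any $v(\mathscr{C}_{(j)}^{\sss{(\mathrm{CM})}})$; fix the truncation level $L_n=n^{2/3}/\log^2 n$ (so $L_n\to\infty$, $L_n=o(n^{2/3})$). Using the size-biased profile, the conditional mean of the total size of the communities of size $>L_n$ inside $\mathscr{C}_{(j)}^{\sss{(\mathrm{CM})}}$ is of order $\tfrac{1}{\Ex{D}}v^\hie(\mathscr{C}_{(j)}^{\sss{(\mathrm{CM})}})\,\Ex{D_nS_n\ind{S_n>L_n}}=o_{\sss{\Prob}}(n^{2/3})$, again by uniform integrability; discard these. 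For the remaining, truncated sizes (in $[0,L_n]$) the population second moment within degree class $k$ is at most $L_n\bar s_k$ and the draws are independent across degree classes, so the conditional variance of the sum is at most $L_n\sum_k v_k^\hie(\mathscr{C}_{(j)}^{\sss{(\mathrm{CM})}})\bar s_k=O_{\sss{\Prob}}(L_n n^{2/3})=o_{\sss{\Prob}}(n^{4/3})$; Chebyshev's inequality then yields a fluctuation $o_{\sss{\Prob}}(n^{2/3})$. Treating finitely many $j$ at a time (enough for the product topology) and applying a union bound proves the displayed estimate of the first paragraph, hence the theorem.

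\emph{Main obstacle.} The delicate ingredient is the degree-resolved refinement of \eqref{eq:critconf} used twice above: that the largest CM components have the asymptotically size-biased degree profile, and with enough uniformity in $k$ to interchange the limit with the summation against $\bar s_k$. This is not literally the cited statement and must be extracted from the underlying exploration; alternatively, a ``weighted-component'' version of the finite-third-moment CM scaling limit (with community $i$ carrying weight $s_{H_i}$) would deliver the constant $c$ directly and bypass the second and third steps. A secondary, more routine subtlety is that $v(\mathscr{C}_{(j)}^{\sss{(\mathrm{CM})}})$ is a deterministic function of the pairing once the skeleton is fixed, so the averaging of community sizes becomes visible only after conditioning on the skeleton and invoking the within-class exchangeability --- which is exactly what legitimises the second-moment estimate.
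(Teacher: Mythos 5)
Your reduction to the macroscopic CM and the constant $c=\Ex{DS}/\Ex{D}$ are the right ideas, and your route is genuinely different from the paper's: the paper runs the depth-first exploration and tracks the additive functional $Z_n(k)=\sum_{i\le k}s_{(i)}$ directly, applying \cite[Proposition 29]{dhara2016} with $f_n(i)=s_i$ --- i.e.\ exactly the ``weighted-component version'' you mention at the end as an alternative --- whereas you condition on the CM skeleton and use within-degree-class exchangeability plus a second-moment estimate. Your ``degree-resolved refinement'' of \eqref{eq:critconf} is obtainable from the same Proposition 29 with indicator functionals, so that ingredient is available, if more roundabout than the paper's one-line application.

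The genuine gap is in the identification $\mathscr{C}_{(j)}=\mathscr{C}_{(j)}^{\sss{(\mathrm{CM})}}$. Your estimate $v(\mathscr{C}_{(j)}^{\sss{(\mathrm{CM})}})=c\,v^\hie(\mathscr{C}_{(j)}^{\sss{(\mathrm{CM})}})+o_{\sss{\Prob}}(n^{2/3})$ holds only for the top finitely many components \emph{in the community-count ordering}. To conclude that the $j$-th largest component \emph{by vertex count} is one of these, you must rule out that some component with few communities nevertheless accumulates $\ge\delta n^{2/3}$ vertices (e.g.\ $\log^2 n$ communities each of size close to $s_{\max}$). That requires a bound that can be union-bounded over all $\Theta(n)$ components, and your Chebyshev inequality only gives a per-component deviation probability of $o(1)$, not $o(1/n)$. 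This is precisely why the paper proves Lemma~\ref{lem:exps} via an exponential supermartingale (Freedman-type) inequality, yielding tail $\me^{-\zeta n^{2/3}/s_{\max}}$, and why Condition~\ref{cond:graph}\ref{cond:sd2} carries the factor $1/\log(n)$: it makes $n\me^{-\zeta n^{2/3}/s_{\max}}\to 0$. Your argument never uses that $\log$ factor, which is the symptom of the missing step; note also that a second-moment control of \emph{all} small components is essentially the $\ltd$ statement and needs the extra hypothesis $\Ex{S_n^2}=o(n^{1/3})$ of Theorem~\ref{thm:ltd}, so Chebyshev cannot be rescued here. A minor further point: your limit constant $\Ex{S}^{-2/3}\Ex{DS}/\Ex{D}$ is the one produced by the size-biased exploration (and by the paper's own Lemma 6), even though the displayed statement of Theorem~\ref{thm:crit} writes $\Ex{DS}/\Ex{S}$; the paper is internally inconsistent on this denominator, so you should not assert without comment that your expression ``is \eqref{eq:critsize}''.
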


In the  Erd\H{o}s-R\'enyi random graph, the inhomogeneous random graph as well as in the CM, the scaled critical component sizes converge in the $\ltd$ topology~\cite{aldous1997, bhamidi2010, dhara2016}, defined as
\begin{equation}
\ltd:=\{\boldsymbol{x}=(x_1,x_2,x_3,\dots): x_1\geq x_2\geq x_3\geq\dots \text{ and } \sum_{i=1}^\infty x_i^2<\infty\},
\end{equation}
with the 2-norm as metric. Theorem~\ref{thm:crit} only proves convergence of the scaled component sizes of a HCM in the product topology. In the CM, the conditions for convergence in the product topology and convergence in the $\ltd$-topology are the same. In the HCM however, the conditions for convergence in the product topology turn out to be different than the conditions for convergence in the $\ltd$-topology:
\begin{theorem}\label{thm:ltd}
	Suppose $G$ is a hierarchical configuration model satisfying \textup{Conditions}~\ref{cond:graph} and~\ref{cond:size}. Then the convergence of Theorem~\ref{thm:crit} also holds with respect to the $\ltd$-topology if and only if $G$ satisfies $\Ex{S_n^2}=o(n^{1/3})$.
\end{theorem}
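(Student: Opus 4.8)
The plan is to reduce the $\ltd$-statement to the behaviour of the second moment $N^{-4/3}\sum_{j\ge1}v(\mathscr{C}_{(j)})^2$ and then to isolate precisely which part of it carries the dependence on $\Ex{S_n^2}$. I would use the standard criterion (as in \cite{aldous1997,dhara2016}): for random sequences in $\ltd$, convergence in distribution with respect to the $\ltd$-metric is equivalent to convergence in distribution in the product topology \emph{together with} tightness of the $\ell^2$-tails, i.e.
\[
\lim_{K\to\infty}\limsup_{n\to\infty}\Prob\Big(N^{-4/3}\!\!\sum_{j>K}v(\mathscr{C}_{(j)})^2>\varepsilon\Big)=0\qquad\text{for every }\varepsilon>0 .
\]
Since the product convergence is exactly Theorem~\ref{thm:crit}, the theorem reduces to showing that this tail-tightness holds if and only if $\Ex{S_n^2}=o(n^{1/3})$.

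I would then split, for each component, $v(\mathscr{C})^2=\sum_{i\in\mathscr{C}}s_i^2+\sum_{i\ne i'\in\mathscr{C}}s_is_{i'}$, and accordingly write $\sum_{j>K}v(\mathscr{C}_{(j)})^2=D_K+O_K$ with $D_K:=\sum_{i:\,\mathscr{C}(i)\notin\{\mathscr{C}_{(1)},\dots,\mathscr{C}_{(K)}\}}s_i^2$ and $O_K:=\sum_{j>K}\sum_{i\ne i'\in\mathscr{C}_{(j)}}s_is_{i'}$. Both are nonnegative, so tail-tightness of $\sum_{j>K}v(\mathscr{C}_{(j)})^2$ is equivalent to the simultaneous tail-tightness of $D_K$ and $O_K$. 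For the diagonal part, $D_K\le\sum_i s_i^2=n\Ex{S_n^2}$, while $D_K=n\Ex{S_n^2}-\sum_{j\le K}\sum_{i\in\mathscr{C}_{(j)}}s_i^2\ge n\Ex{S_n^2}-s_{\max}\sum_{j\le K}v(\mathscr{C}_{(j)})$; since $v(\mathscr{C}_{(j)})=O_{\sss{\Prob}}(N^{2/3})$ by Theorem~\ref{thm:crit} and $s_{\max}\ll n^{2/3}/\log n$ by Condition~\ref{cond:graph}\ref{cond:sd2}, the correction is $o_{\sss{\Prob}}(n^{4/3})$ for each fixed $K$, whence $N^{-4/3}D_K=\Ex{S_n}^{-4/3}n^{-1/3}\Ex{S_n^2}+o_{\sss{\Prob}}(1)$. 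Using $\Ex{S_n}\to\Ex{S}\in(0,\infty)$ (Condition~\ref{cond:graph}\ref{cond:S}), the deterministic leading term vanishes iff $\Ex{S_n^2}=o(n^{1/3})$, so $D_K$ is tail-tight iff $\Ex{S_n^2}=o(n^{1/3})$. This already gives the ``only if'' direction: $\ltd$-convergence forces tail-tightness of $\sum_{j>K}v(\mathscr{C}_{(j)})^2$, hence of the nonnegative $D_K$, hence $\Ex{S_n^2}=o(n^{1/3})$.

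For the ``if'' direction, assume $\Ex{S_n^2}=o(n^{1/3})$; it remains to show $O_K$ is tail-tight. I would prove the stronger statement $N^{-4/3}\sum_{\mathscr{C}}\sum_{i\ne i'\in\mathscr{C}}s_is_{i'}\indist c^2\|\boldsymbol{\gamma}^\lambda\|_2^2$ jointly with Theorem~\ref{thm:crit}, where $c=\Ex{S}^{-2/3}\Ex{DS}/\Ex{S}$; then $O_K$ is tail-tight by subtracting the contribution of the $K$ largest components, which converges to $c^2\sum_{j\le K}(\gamma^\lambda_j)^2$ by Theorem~\ref{thm:crit} (the diagonal part of a macroscopic component being at most $s_{\max}v(\mathscr{C}_{(j)})=o_{\sss{\Prob}}(N^{4/3})$), and letting $K\to\infty$; combined with Theorem~\ref{thm:crit} this yields the $\ltd$-convergence. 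To identify the limit of the off-diagonal sum I would compare it with the underlying configuration model. Writing $s_i=\beta_{d_i}+\xi_i$ with $\beta_k=\Ex{S_n\mid D_n=k}$, so that $\sum_{i:\,d_i=k}\xi_i=0$ for every $k$, one has
\[
\sum_{i\ne i'}s_is_{i'}\ind{i\leftrightarrow i'}=\sum_{i\ne i'}\beta_{d_i}\beta_{d_{i'}}\ind{i\leftrightarrow i'}+2\sum_{i\ne i'}\beta_{d_i}\xi_{i'}\ind{i\leftrightarrow i'}+\sum_{i\ne i'}\xi_i\xi_{i'}\ind{i\leftrightarrow i'}.
\]
The first term depends on the HCM only through the community-degree sequence and its configuration-model pairing; by the $\ltd$-convergence known for the critical configuration model~\cite{dhara2016} together with $v(\mathscr{C}_{(j)})/v^\hie(\mathscr{C}_{(j)})\to\Ex{DS}/\Ex{S}$ (established in the proof of Theorem~\ref{thm:crit}) it rescales to $c^2\|\boldsymbol{\gamma}^\lambda\|_2^2$. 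The last two terms I would bound by a first-moment estimate: using the two-point bound $\Prob(i\leftrightarrow i')\le Cn^{1/3}d_id_{i'}/\ell_n$ valid in the critical window (since $d_{\max}=o(n^{1/3})$), the would-be leading part cancels because $\sum_{i:d_i=k}\xi_i=0$, and the remainder is absorbed using Condition~\ref{cond:graph}\ref{cond:EDS} (which gives $\sum_i d_is_i=O(n)$) and, crucially, $\sum_i\xi_i^2\le\sum_i s_i^2=n\Ex{S_n^2}=o(n^{4/3})$.

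The main obstacle is this last step. Two points are delicate. First, the ordering of components by number of vertices need not match the ordering by number of communities, which has to be reconciled via the concentration $v(\mathscr{C}_{(j)})/v^\hie(\mathscr{C}_{(j)})\to\Ex{DS}/\Ex{S}$ and by treating components with boundedly many communities separately — for those, $\sum_{\mathscr{C}:\,v^\hie(\mathscr{C})\le L}\sum_{i\ne i'\in\mathscr{C}}s_is_{i'}=o_{\sss{\Prob}}(n^{4/3})$ again follows from the path-counting bound on $\Prob(i\leftrightarrow i')$ and Condition~\ref{cond:graph}\ref{cond:EDS}. Second, $\Prob(i\leftrightarrow i')$ is \emph{not} exactly a function of the local degrees $d_i,d_{i'}$ — it depends on the global near-critical geometry of the configuration model — so the cancellation from $\sum_{i:d_i=k}\xi_i=0$ cannot be invoked verbatim and must be combined with the exploration/path-counting estimates that underlie Theorem~\ref{thm:crit}; this is where the bulk of the work lies. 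Everything else is bookkeeping around the identity $v(\mathscr{C})^2=\sum_{i\in\mathscr{C}}s_i^2+\sum_{i\ne i'\in\mathscr{C}}s_is_{i'}$ and the observation that only the diagonal term $\sum_i s_i^2=n\Ex{S_n^2}$ depends on the community sizes beyond the concentrated leading order.
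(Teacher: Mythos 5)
Your reduction to tail-tightness of $N^{-4/3}\sum_{j>K}v(\mathscr{C}_{(j)})^2$ is the right framework, and your ``only if'' direction is correct and in fact cleaner than the paper's: the paper also lower-bounds the tail by the diagonal sum $\sum s_{(j)}^2$ via convexity, but then needs a martingale estimate to show the communities explored before time $Tn^{2/3}$ contribute negligibly, whereas your bound $\sum_{j\le K}\sum_{i\in\mathscr{C}_{(j)}}s_i^2\le s_{\max}\sum_{j\le K}v(\mathscr{C}_{(j)})=o_{\sss{\Prob}}(n^{4/3})$ dispenses with that in one line using Condition~\ref{cond:graph}\ref{cond:sd2}. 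Your structural observation that $\Ex{S_n^2}$ enters only through the diagonal term also matches the paper exactly: its key estimate reads $\Ex{S_{H_n}v(\mathscr{C}^{\sss{\geq T}}(H_n))}=\Ex{(S_n^{\sss{\geq T}})^2}+K\Ex{(D_nS_n)^{\sss{\geq T}}}\,n^{1/3}/(CT-\lambda)$, i.e.\ diagonal plus off-diagonal.

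The ``if'' direction, however, has a genuine gap, and it is the one you flagged yourself. You propose to control $O_K$ by proving the \emph{distributional} limit $N^{-4/3}\sum_{\mathscr{C}}\sum_{i\ne i'\in\mathscr{C}}s_is_{i'}\indist c^2\|\boldsymbol{\gamma}^\lambda\|_2^2$ via the centering $s_i=\beta_{d_i}+\xi_i$. But (a) the leading term is a component sum weighted by $\beta_{d_i}$, and the $\ltd$-convergence known for the critical CM~\cite{dhara2016} is for \emph{unweighted} component sizes, so this term already requires a theorem of the same type as the one being proved; (b) the cross terms are exactly borderline: a crude first-moment bound with $\Prob(i\leftrightarrow i')\le Cn^{1/3}d_id_{i'}/\ell_n$ gives $O(n^{1/3}\cdot\ell_n^{-1}\cdot n\cdot n)=O(n^{4/3})$, not $o(n^{4/3})$, so the cancellation $\sum_{i:d_i=k}\xi_i=0$ is indispensable — yet $\Prob(i\leftrightarrow i')$ is not a function of $(d_i,d_{i'})$, and establishing a two-point asymptotic sharp enough to realize the cancellation is a substantial open step, not bookkeeping; (c) the two-point bound itself fails as stated for $\lambda\ge 0$, where $\sum_L\nu_n^{L-1}$ diverges, so one must first truncate path lengths and handle the event $v^\hie(\mathscr{C}_{\max})>n/4$ separately. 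The paper avoids all of this by proving only a first-moment \emph{upper} bound on the tail: it removes the components explored before time $Tn^{2/3}$, which makes the remaining graph subcritical with $1-\bar\nu_n\ge CTn^{-1/3}$, and then Lemma~\ref{lem:Evks} (path counting) gives $\Ex{v(\mathscr{C}_{(k,s)})}\le s+Ck/(1-\bar\nu_n)$, whence the tail probability is $O(1/(\delta T))+o(1)$ by Markov. Your $O_K$ (tail beyond the $K$ largest components) does not interface with this subcriticality-after-removal mechanism, which is precisely what supplies the needed factor $1/T$; without it, the off-diagonal sum over all components is genuinely of order $n^{4/3}$ and only a distributional identification of its limit — the hard step you deferred — would close the argument.
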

\begin{remark}
	This theorem shows that there exist graphs where the critical component sizes converge in the product topology, but not in the $\ltd$-topology. As mentioned before, this does not happen in other random graph models such as  Erd\H{o}s-R\'enyi random graph, the inhomogeneous random graph and the CM~\cite{aldous1997, bhamidi2010, dhara2016}. Furthermore, we can find the exact condition under which the component sizes only converge in the product topology.  
	This theorem also shows that the conditions for convergence in the product topology and the $\ltd$-topology are equivalent in the CM: the CM is a special case of the HCM with size one communities. Therefore, $\Ex{S_n}=1$ for the CM, and the component sizes always converge in the $\ltd$-topology if they converge in the product topology.
	It is surprising that the condition on the $\ltd$ convergence only depends on the community sizes, but not on the joint distribution of the community sizes and the inter-community degrees.	
\end{remark}
\begin{remark}
	 The results that we show in this paper can also be applied to a CM or a HCM with vertex attributes. In this setting, every vertex of a CM has a positive, real-valued vertex attribute $S$, where $S$ satisfies Conditions~\ref{cond:graph} and~\ref{cond:size}. These vertex attributes may for example denote the capacity or the weight of a vertex. If we are interested in the sum of the vertex attributes in each connected component, then these sums scale as in Theorem~\ref{thm:crit}. In an even more general setting, the random graph has a community structure, and every vertex within the communities again has a vertex attribute. Then we are back in the HCM setting, only now the community size $S$ does not denote the number of vertices in a community, but the sum over the vertex attributes in a certain community. Therefore, also in a random graph with community structure and vertex attributes, the critical sum over vertex attributes satisfies Theorem~\ref{thm:crit} under appropriate conditions as in Condition~\ref{cond:graph}.
\end{remark}

\subsection{Results on critical percolation on the HCM}\label{sec:percresults}
We now consider bond percolation on the hierarchical configuration model, where every edge is removed independently with probability $1-\pi$.
In the CM, the random graph that remains after percolation can be described in terms of a CM with a different degree sequence~\cite{fountoulakis2007,janson2009b}. Furthermore, if the CM is supercritical, it is possible to choose $\pi$ such that the resulting graph is distributed as a critical CM.
Similarly, after percolating a HCM, we again obtain a HCM, but with a different community distribution since the communities are percolated~\cite{hofstad2015}. By adjusting the parameter $\pi$ we can make sure that the HCM is critical after percolation. Thus, given any supercritical HCM, it is possible to create a critical HCM, by setting $\pi$ correctly.

In the hierarchical configuration model, it is convenient to percolate first only the edges inside communities. This percolation results in a HCM with percolated communities. These percolated communities may be disconnected. However, if we define the connected components of the percolated communities as new communities, we have a new HCM. Let $S_n^\prc$ and $D_n^\prc$ denote the size and degree of communities after percolation only inside the communities with probability $\pi$, and $S^\prc$ and $D^\prc$ their infinite size limits. After this, we percolate only the inter-community edges. This percolation is similar to percolation on the CM, since the inter-community edges are paired as in the CM.

We assume the following:
\begin{condition}[Critical percolation window]\label{cond:perc}
	\leavevmode
	\begin{enumerate}
		\item
		$S_n$ and $D_n$ satisfy \textup{Conditions}~\ref{cond:graph} and~\ref{cond:size}\ref{cond:ED} and~\ref{cond:PD}, and
		\begin{equation}\label{eq:condin}
		\nu_{D^\prcn}^{\sss{(n)}}:=\frac{\mathbb{E}[D_n^\prcn(D_n^\prcn-1)]}{\mathbb{E}[D^\prcn_n]}\to\frac{\mathbb{E}[D^\prc(D^\prc-1)]}{\mathbb{E}[D^\prc]}>1.
		\end{equation}
		\item
		For some $\lambda\in\mathbb{R}$,
		\begin{equation}\label{eq:pic}
		\pi_n = \pi_n(\lambda):=\frac{1}{\nu_{D^{\sss{(\pi_n(\lambda))}}}^{\sss{(n)}}}\left(1+\frac{\lambda}{n^{1/3}}\right).
		\end{equation}
		\end{enumerate}
		\end{condition}
	Here $\pi$ is the solution to $\pi=1/\nu_{D^\prc}$. 
	\begin{remark}
		It can be shown that $\nu_{D^\prc}^{\sss{(n)}}$ is increasing in $\pi$. Thus,~\eqref{eq:pic} has a \textit{unique} solution for every $\lambda\in\mathbb{R}$ when $n$ is large enough.  
		\end{remark}
		Equation~\eqref{eq:condin} makes sure that after percolation of the intra-community edges, the new HCM is supercritical, otherwise there is no hope of making the graph critical by removing more edges. After percolating inside communities, $\nu_{D^{\sss{\pi}}}^{\sss{(n)}}$ is the value of $\nu$ of the new macroscopic CM.
		
		Let $\tilde{D}^\prc$ denote the exploded version of $D^\prc$, that is, every half-edge of a percolated community is kept with probability $\sqrt{\pi}$, and with probability $1-\sqrt{\pi}$, it explodes, it creates a new community of the same shape with only one half-edge attached to it. Then, by~\cite[Thm. 3]{dhara2016}, the component sizes of a percolated CM have similar scaling limits as the original configuration model, but with $D$ replaced by its exploded version. For the HCM, a similar statement holds. Let $\tilde{\boldsymbol{\gamma}}^\lambda$ denote the ordered excursions of the reflected Brownian motion $B_{\eta,\mu}^\lambda$, with $\mu=\Ex{\tilde{D}^\prc}$ and $\eta=\Ex{(\tilde{D}^{\prc })^3}\Ex{\tilde{D}^\prc}-\Ex{(\tilde{D}^{\prc})^2}^2$.
			\begin{theorem}\label{thm:prc}
				Under \textup{Condition}~\ref{cond:perc} in the percolated hierarchical configuration model,
				\begin{equation}\label{eq:sizeperc}
				N^{-2/3}(v(\mathscr{C}_{(j)}))_{j\geq 1}
				\to \mathbb{E}[\tilde{S}^\prc]^{-2/3}\frac{\mathbb{E}[\tilde{D}^\prc\tilde{S}^\prc]}{\mathbb{E}[\tilde{D}^\prc]}\sqrt{\pi}\tilde{\boldsymbol{\gamma}}^\lambda,
				\end{equation}
				in the product topology.
				\end{theorem}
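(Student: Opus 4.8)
The plan is to deduce Theorem~\ref{thm:prc} from Theorem~\ref{thm:crit} by realising the percolated HCM as a \emph{non-percolated} HCM. Since the edges are removed independently, I would percolate in two stages: first the intra-community edges, then the inter-community edges, each with probability $1-\pi_n$. The first stage breaks each community $H=(F,\boldsymbol{d})$ into its connected pieces; calling each piece a community gives a HCM with community law governed by $(S_n^\prc,D_n^\prc)$, whose limit is supercritical by~\eqref{eq:condin}. The second stage percolates inter-community half-edges, which are paired exactly as in a configuration model, and so is handled by the \emph{explosion construction} of~\cite{janson2009b,dhara2016} (adapted to the HCM in~\cite{hofstad2015}): in distribution it produces the HCM $\tilde G$ obtained from the once-percolated communities by replacing a community of inter-community degree $d$ by a ``main'' copy retaining a $\mathrm{Bin}(d,\sqrt{\pi})$ subset of its half-edges, together with $d-\mathrm{Bin}(d,\sqrt{\pi})$ single-half-edge copies of the same shape; an original inter-community edge survives precisely when both its half-edges sit on main copies, which has probability $(\sqrt{\pi})^2=\pi$. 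The degree and size of a uniform community of $\tilde G$ are the exploded variables $\tilde D^\prc,\tilde S^\prc$, and a component of the percolated HCM is exactly the set of main copies inside a component of $\tilde G$; thus the vertex count of a percolated component is the vertex count of the matching component of $\tilde G$ minus the vertices of the single-half-edge copies attached to it.

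The core step is to verify that $\tilde G$ satisfies Conditions~\ref{cond:graph} and~\ref{cond:size}. Condition~\ref{cond:graph}\ref{cond:pn} holds because the communities are percolated and exploded independently with a distribution depending only on the community type, so a law of large numbers over the $n$ communities yields $P_n^{\tilde G}\inprob P^{\tilde G}$; the moment conditions~\ref{cond:graph}\ref{cond:S},\ref{cond:EDS} and~\ref{cond:size}\ref{cond:ED} transfer from those of $(S^\prc,D^\prc)$, which are controlled by $(S,D)$ through $\sum_{H'\subseteq H}d_{H'}^{\,3}\le d_H^{\,3}$, $s_{H'}\le s_H$ and binomial thinning, together with the uniform integrability contained in Condition~\ref{cond:perc}. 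Condition~\ref{cond:graph}\ref{cond:sd2} holds since explosion does not enlarge communities, $s_{\max}^{\tilde G}\le s_{\max}\ll n^{2/3}/\log n$, and the number of communities of $\tilde G$ is of order $n$. Condition~\ref{cond:size}\ref{cond:PD} holds because the single-half-edge copies give $\Prob(\tilde D^\prc=1)>0$ while thinning keeps $\Prob(\tilde D^\prc\ge 2)>0$. The decisive point is Condition~\ref{cond:size}\ref{cond:nu}: the single-half-edge copies contribute $0$ to $\mathbb{E}[\tilde D^\prc(\tilde D^\prc-1)]$, explosion preserves the total number of half-edges, and $\mathbb{E}[\mathrm{Bin}(d,\sqrt{\pi})(\mathrm{Bin}(d,\sqrt{\pi})-1)]=\pi\,d(d-1)$, whence the value of $\nu$ for $\tilde G$ equals $\pi_n$ times that of the once-percolated HCM; by the choice of $\pi_n=\pi_n(\lambda)$ in~\eqref{eq:pic} this equals $1+\lambda n^{-1/3}$, so $\tilde G$ is critically tuned with window parameter $\lambda$.

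Applying Theorem~\ref{thm:crit} to $\tilde G$ then gives that its rescaled component sizes converge, in the product topology, to a multiple of $\tilde{\boldsymbol{\gamma}}^\lambda$. It remains to rewrite this in terms of the number of vertices of the percolated HCM normalised by $N^{-2/3}$. For this I would express the vertex count of a percolated component as the vertex count of the corresponding $\tilde G$-component minus the vertices carried by the single-half-edge copies attached to it, and show --- via a concentration/law-of-large-numbers argument inside large components, which sample once-percolated communities in a degree--size-biased way --- that the subtracted quantity is, up to a $o_{\sss{\Prob}}(1)$ relative error, a deterministic multiple of the component size. Combining this with the relations between $N$, the number of communities of $\tilde G$, and $\mathbb{E}[\tilde S^\prc]$, the prefactors collapse to $\mathbb{E}[\tilde S^\prc]^{-2/3}\,\mathbb{E}[\tilde D^\prc\tilde S^\prc]/\mathbb{E}[\tilde D^\prc]\cdot\sqrt{\pi}$, the factor $\sqrt{\pi}$ appearing exactly as in the corresponding computation for percolation on the CM in~\cite[Thm.~3]{dhara2016}.

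The main obstacle is this last translation: controlling the vertices contributed by the exploded single-half-edge copies hanging off a component of $\tilde G$ and proving that this contribution concentrates around a deterministic fraction of the component size rather than fluctuating at the same order --- together with the careful bookkeeping of normalisation constants and of the window parameter between the natural scale of $\tilde G$ and the original scale $n$, $N$. A secondary, purely technical difficulty is ensuring that the once-percolated community ensemble genuinely satisfies the regularity hypotheses: independence across communities makes the law of large numbers routine, but promoting ``bounded'' to ``convergent'' for $\mathbb{E}[D_n^\prc S_n^\prc]$ and $\mathbb{E}[(D_n^\prc)^3]$ requires the uniform-integrability content of Condition~\ref{cond:perc}, and the randomness of the number of communities of $\tilde G$ must be handled by conditioning on the first-stage outcome and invoking Theorem~\ref{thm:crit} on the resulting high-probability good deterministic configuration.
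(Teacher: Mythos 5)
Your proposal follows essentially the same route as the paper: two-stage percolation (intra-community first, giving the $(S^\prc,D^\prc)$-HCM, then inter-community via the explosion construction with parameter $\sqrt{\pi}$), verification that the exploded HCM satisfies Conditions~\ref{cond:graph} and~\ref{cond:size} including the key identity $\nu_{\tilde D_n^\prcn}=\pi_n\nu_{D_n^\prcn}+o(n^{-1/3})=1+\lambda n^{-1/3}+o(n^{-1/3})$ (the paper's Lemma~\ref{lem:perccond}), an application of Theorem~\ref{thm:crit}, and finally a size-biased concentration argument showing the deleted single-half-edge copies contribute a deterministic $(1-\sqrt{\pi_n})$-fraction of each large component. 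This matches the paper's proof in structure and in all the essential steps.
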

				\begin{remark}
					By~\cite{dhara2016}, a critical CM has
					\begin{equation}\label{eq:picm}
					\pi_n(\lambda)=\frac{1}{\nu^{\sss{(n)}}}(1+\lambda/n^{1/3}) =\pi_n(0)(1+\lambda/n^{1/3}) .
					\end{equation}
					Therefore, the critical window~\eqref{eq:pic} in the HCM is similar to the critical window in the CM, with the difference that in the HCM, we first perform an extra step of percolation inside the communities. For this reason, the critical window of the HCM~\eqref{eq:pic} is in an implicit form, as it depends on both the percolation inside communities, captured in $\nu_{D^{\sss{(\pi_n)}}}^{\sss{(n)}}$, and it depends on the inter-community percolation. In Section~\ref{sec:exppi}, we show that the critical window in the HCM can be written as  
					\begin{equation}\label{eq:pinc}
					\pi_n(\lambda)=\pi_n(0)\left(1+\frac{c^*\lambda}{n^{1/3}}\right),
					\end{equation}
					for some constant $c^*\leq 1$ when $\Ex{D_n^2S_n}\to\Ex{D^2S}<\infty$. In this case, the critical window for the HCM is very similar to the critical window in the CM. 
					The constant $c^*$ captures how much smaller the critical window becomes when adding a community structure. The more vulnerable the communities are to percolation, the smaller the constant $c^*$.
				\end{remark}
				
\begin{remark}
	When percolating inside the communities with parameter $\pi_{\sss{\text{in}}}$ results in a supercritical graph, it is always possible to find $\pi_{\sss{\text{out}}}=1/\nu_{D^{(\pi_{\sss{\text{in}}})}}(1+\lambda/n^{1/3})$ such that the resulting graph is in the critical window with parameter $\lambda$ if we then percolate the inter-community edges with probability $\pi_{\sss{out}}(\lambda)$. The critical value of the HCM is then defined as the value such that $\pi_{\sss{\text{in}}}(\lambda)=\pi_{\sss{\text{out}}}(\lambda)$. Figure~\ref{fig:pinout} illustrates this for several values of $\lambda$ for star-shaped communities.
	
	\begin{figure}
		\centering
		\includegraphics[width=0.4\textwidth]{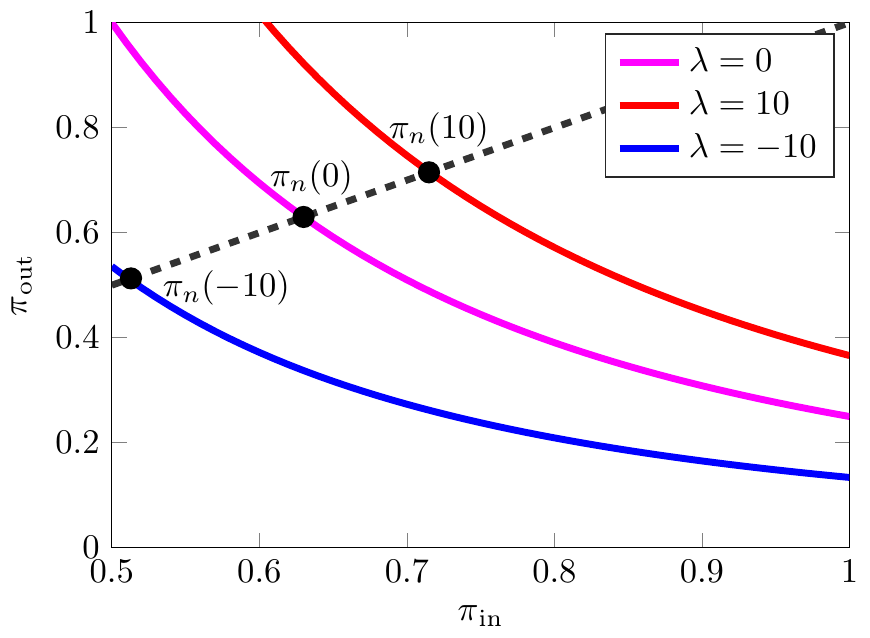}
		\caption{The value of $\pi_{\text{out}}$ corresponding to a given $\pi_{\text{in}}$ for star-shaped communities with five end points (as in Figure~\ref{fig:star}), with $n=10^5$. The intersection with the line $y=x$ gives the critical value $\pi_n(\lambda)$.}
		\label{fig:pinout}
	\end{figure}
	
\end{remark}
\begin{remark}\label{rem:perc}
	Theorem~\ref{thm:prc} shows the convergence of the percolated component sizes in the product topology. As in Theorem~\ref{thm:ltd}, by assuming that $\Ex{S_n^2}=o(n^{1/3})$, we can also show convergence in the $\ltd$-topology. However, in the case of percolation, $\Ex{S_n^2}=o(n^{1/3})$ is not a necessary condition for convergence in the $\ltd$-topology. After percolating first only edges inside communities, we apply Theorem~\ref{thm:crit} or~\ref{thm:ltd} to show the convergence of the percolated clusters.
	In the example of line communities, communities that have the shape of a line, we can make the lines large enough such that $\Ex{S_n^2}>\varepsilon n^{1/3}$. However, if we then percolate inside these line communities, $\Ex{(S_n^\prc)^2}=o(n^{1/3})$. Thus, after percolating inside the communities we can use Theorem~\ref{thm:ltd} to show that the percolated component sizes converge in the $\ltd$-topology, even though $\Ex{S_n^2}\geq \varepsilon n^{1/3}$.
\end{remark}

\subsection{Discussion}
\paragraph{What makes communities ``small"?}
Communities form a mesoscopic structure of a graph. If the communities become very large, then the critical component sizes will be determined by the sizes of the largest communities. In this paper, we find the conditions under which the influence of the mesoscopic structure on the critical component sizes is small when the inter-community degrees have a finite third moment. In this situation, the mesoscopic structures of the configuration model are small enough for the model to be in the same universality class as the configuration model.
Theorems~\ref{thm:crit} and~\ref{thm:ltd} show that there are different scales on which communities can be ``small". Theorem~\ref{thm:crit} shows that the communities are small on the mesoscopic scale when Condition~\ref{cond:graph} holds. In particular, the maximum size of a ``small" community is $n^{2/3}/\log(n)$. This is much smaller than the total number of vertices in the graph, but it still goes to infinity when $n\to\infty$, which shows the mesoscopic nature of the communities. Under these conditions, the order of the components is determined by the order of the components in the underlying CM. Since the convergence of Theorem~\ref{thm:crit} holds in the product topology, this only holds for the first $k$ components, for $k$ fixed.
Theorem~\ref{thm:ltd} shows that an additional condition is necessary for the communities to be small on a macroscopic scale. If this condition does not hold, the first $k$ components are still determined by the macroscopic CM for any fixed $k$, but some large components that are small in the macroscopic sense will be discovered eventually.

\paragraph*{Similarity to the configuration model.}
The scaling limit of Theorem~\ref{thm:crit} is similar to the scaling limit of the configuration model; it only differs by a constant. In fact, the CM is a special case of the HCM: the case where all communities have size one. One could argue that therefore the HCM is in the same universality class as the CM. However, there are still some differences. Note that the variable $D$ in the HCM counts the number of edges going out of the communities. Then, if $D$ has finite third moment, the scaling limit of a HCM is similar to the scaling limit of a CM with finite third moment of the degrees. However, it is possible to construct a HCM with finite third moment of $D$, but infinite third moment of the degree distribution~\cite{hofstad2015}. One example of this is a hierarchical configuration model where all communities are households~\cite{ball2009}: complete graphs, where all vertices of the complete graphs have inter-community degree one. In this household model, every community of  inter-community degree $k$, contains also $k$ vertices of degree $k$. Therefore, the inter-community degree distribution may have finite third moment, while the degree distribution has an infinite third moment. In the CM, the scaling limit under an infinite third moment is very different from the one with finite third moment~\cite{dhara2016a}. However, using this household model, it is possible to construct a random graph with an infinite third moment of the degree distribution, but a similar scaling limit as the CM under the finite third moment assumption.
Similarly, it is possible to create a community structure such that the inter-community degrees have an infinite third moment, but the degree distribution has a finite third moment.
Therefore, adding a community structure to a graph while keeping the degree distribution fixed may change the scaling limits significantly.

\paragraph*{Surplus edges.}
The number of surplus edges of a connected graph $G$ is defined as $\mathsf{SP}(G):=(\# \text{ edges of $G$})-\abs{G}+1$ and indicates how far $G$ deviates from being a tree.
In the CM, the rescaled component sizes and the number surplus edges in the components converge jointly. Note that a surplus edge in the macroscopic CM stays a surplus edge of the HCM, since all communities are connected. In the intuitive picture of densely connected communities, the communities have many surplus edges.
	In the HCM, we give each vertex in the macroscopic CM a weight: the size of the corresponding community. Then, in Theorem~\ref{thm:crit} and~\ref{thm:ltd}, we are interested in the weighted size of the components. Counting surplus edges is very similar, now we also give each vertex in the macroscopic CM a weight: the number of surplus edges in the corresponding community. We are again interested in the weighted component sizes, which counts the total number of surplus edges inside communities. The surplus edges between different communities are the surplus edges of the macroscopic CM. The number of such edges rescaled by $N^{2/3}$ goes to zero by~\cite{dhara2016}. Therefore, if the surplus edges satisfy the same conditions as the community sizes in Condition~\ref{cond:size}\ref{cond:S}, they scale similarly to the component sizes. Thus, if $\mathsf{SP}_n$ denotes the number of surplus edges inside a uniformly chosen community, and $\Ex{\mathsf{SP}_n}\to\Ex{\mathsf{SP}}<\infty$, $\Ex{\mathsf{SP}_n\cdot D_n}\to\Ex{\mathsf{SP}\cdot D}<\infty$ and $\mathsf{SP}_{\max}\ll \frac{n^{2/3}}{\log (n)}$, then
	\begin{equation}\label{eq:surp}
	N^{-2/3}(v(\mathscr{C}_{(j)}), \mathsf{SP}(\mathscr{C}_{(j)}))_{j\geq 1}\indist \Ex{S}^{-2/3} \boldsymbol{\gamma}^{\lambda}\left(\frac{\Ex{DS}}{\Ex{S}},\frac{\Ex{\mathsf{SP}\cdot D}}{\Ex{S}}\right),
	\end{equation}
	in the product topology, where $\mathsf{SP}(\mathscr{C})$ denotes the number of surplus edges in component $\mathscr{C}$.
		
	Unsurprisingly, this scaling of the surplus edges is very different from the scaling of the surplus edges in the CM, which is locally tree-like, even though the scaling of the component sizes is very similar to the one in the CM.
	
	Let $\boldsymbol{N}^\lambda = (N^\lambda(s))_{s\geq 0}$ denote a counting process of marks with intensity $W^\lambda(s)/\Ex{D}$ conditional on $(W^\lambda(u))_{u\leq s}$, with $W^\lambda(s)$ as in~\eqref{eq:W}. Furthermore, let $N(\gamma)$ denote the number of marks in the interval $\gamma$. Let $\mathsf{SP}^\hie(\mathscr{C})$ denote the number of surplus edges of component $\mathscr{C}$ that are inter-community edges. These the surplus edges are the surplus edges of the macroscopic CM. By~\cite{dhara2016}, these surplus edges converge jointly with the component sizes to $N(\boldsymbol{\gamma}^\lambda)$. Therefore, in the HCM,
		\begin{equation}\label{eq:surpconf}
		(N^{-2/3}v(\mathscr{C}_{(j)}), \mathsf{SP}^\hie(\mathscr{C}_{(j)}))_{j\geq 1}\indist \left(\Ex{S}^{-2/3} \boldsymbol{\gamma}^{\lambda}\frac{\Ex{DS}}{\Ex{S}},N(\boldsymbol{\gamma}^\lambda)\right).
		\end{equation}
	
	\paragraph*{Infinite third moment.}
We investigate the scaled component sizes of a random graph with communities, where the inter-community degrees have a finite third moment. A natural question therefore is what happens if we drop the finite third moment assumption.
The scaled component sizes of random graphs with infinite third moment, but finite second moment have been investigated for several models. If the degrees follow a power-law with exponent $\tau\in(3,4)$, then the component sizes of an inhomogeneous random graph as well as a CM scale as $n^{(\tau-2)/(\tau-1)}$~\cite{dhara2016a, bhamidi2012}. In the HCM this may also be the correct scaling, but clearly then we need to replace Condition~\ref{cond:graph}\ref{cond:sd2} by $s_{\max}=o(n^{(\tau-2)/(\tau-1)})$, since otherwise the largest community will dominate the component sizes. This indicates that the heavier the power-law tail, the smaller the maximal community size can be for the HCM to be in the same universality class as the CM. What exact assumptions on the community size distribution are needed to obtain the same scaling limit as in the CM remains open for further research.

\paragraph{Optimality of conditions.}
Condition~\ref{cond:size} is necessary for the macroscopic CM to have component sizes of size $O(n^{2/3})$. Clearly it is also necessary that the maximum community size is $o(n^{2/3})$, since otherwise the largest community could dominate the component sizes. For example, it would be possible to create communities of size larger than $n^{2/3}$ that have no half-edges. Then, these components are the smallest components of the macroscopic CM, but may be the largest components in the HCM. Condition~\ref{cond:graph}\ref{cond:sd2} has an extra factor $1/\log(n)$, which we need to prove that the component sizes are not dominated by the community sizes. Probably this condition is not optimal, we believe the optimal condition to be $s_{\max}=o(n^{2/3})$. Furthermore, Conditions~\ref{cond:graph}\ref{cond:S} and~\ref{cond:EDS} are necessary for taking the limit in~\eqref{eq:critsize}.

\paragraph*{Outline.}
The remainder of this paper is organized as follows. In Section~\ref{sec:crit} we prove Theorem~\ref{thm:crit}. This proof relies heavily on the fact that the macroscopic CM follows a similar scaling limit~\cite{dhara2016}.
Then we prove Theorem~\ref{thm:ltd} in Section~\ref{sec:ltd}.
In Section~\ref{sec:perc}, we study percolation on the HCM. First we prove Theorem~\ref{thm:prc}, and after that we show in Section~\ref{sec:exppi} that the critical window of a HCM is similar to the critical window of the CM. We conclude Section~\ref{sec:perc} with some examples of communities.

\section{Proof of the scaling of the critical HCM}\label{sec:crit}
In this section, we prove Theorem~\ref{thm:crit} and~\ref{thm:ltd}. We start by describing an exploration process that finds the component sizes in Section~\ref{sec:expl}.  We use this exploration process to show that the components that are found before time $Tn^{2/3}$ for some large $T$ converge to the right scaling limit. After that, we prove in Section~\ref{sec:late} that the probability that a large component is found after that time is small, which completes the proof of Theorem~\ref{thm:crit}. Then we prove Theorem~\ref{thm:ltd} in Section~\ref{sec:ltd}.

\subsection{Exploration process.}\label{sec:expl}
To find the component sizes, we use the same depth-first exploration process for the CM as~\cite[Algorithm 1]{dhara2016}. However, instead of exploring vertices, we now explore communities. This means that we only explore the macroscopic CM. In each step of the exploration process, we discover an entire community, and we explore further using only the inter-community connections. Therefore, the only difference between our exploration process and the standard exploration process for the CM, is that we count the number of vertices in each community that we discover. At each step $k$, an inter-community half-edge can be in the set of active half-edges  $\mathcal{A}_k$, in the set of sleeping half-edges $\mathcal{S}_k$, or none of these two sets. Furthermore, every vertex of the HCM is alive or dead. When a vertex is dead, it is in the set of dead vertices $\mathcal{D}_k$.

\begin{algorithm}\label{alg:explore}
	For $k=0$, all inter-community half-edges are in $\mathcal{S}_0$, and both $\mathcal{D}_0$ and $\mathcal{A}_0$ are empty. While $\mathcal{A}_k\neq \varnothing$ or $\mathcal{S}_k\neq\varnothing$ we set $k=k+1$ and perform the following steps:
	\begin{enumerate}
		\item
		If $\mathcal{A}_k\neq\varnothing$, then take the smallest inter-community half-edge $a$ from $\mathcal{A}_k$.
		\item
		Take the half-edge $b$ that is paired to $a$. By construction of the algorithm, the community $H$ to which $b$ is attached, is not discovered yet. Let ${b_{H}}_1,\dots,{b_{H}}_r$ be the other half-edges attached to community $H$, and let $\mathcal{V}_H$ denote the set of vertices of community $H$. Let $b, b_{H1},\dots,b_{Hr}$ be smaller than all other elements of $\mathcal{A}_k$, and order them as ${b_{H}}_1>{b_{H}}_2>\dots>{b_{H}}_r>b$. Let $\mathcal{C}_k\subset \{{b_{H}}_1,\dots,{b_{H}}_r\}$ denote all edges attached to community $H$ that attach to another half-edge adjacent to $H$. Furthermore, let $\mathcal{B}_k\subset \mathcal{A}_k\cup \{{b_{H}}_1,\dots,{b_{H}}_r\}$ denote the collection of half-edges in $\mathcal{A}_k$ that have been paired to one of the ${b_{H}}_i$'s, including the corresponding half-edges incident to community $H$. Then, set $\mathcal{A}_{k+1}=\mathcal{A}_k\cup \{{b_{H}}_1,\dots,{b_{H}}_r\}\setminus(\mathcal{B}_k\cup\mathcal{C}_k)$,
		$\mathcal{S}_{k+1}=\mathcal{S}_k\setminus \{b,{b_{H}}_1,\dots ,{ b_{H}}_r\}$ and $\mathcal{D}_{k+1}=\mathcal{D}_k\cup \mathcal{V}_H$.
		\item
		If $\mathcal{A}_k=\varnothing$, then we pick a half-edge $a$ from $\mathcal{S}_k$ uniformly at random. Let $H$ be the community to which $a$ is attached, and $a_{H1},\dots, a_{Hr}$ the other half-edges attached to community $H$. Again, order them as ${a_{H}}_1> {a_{H}}_2\dots> {a_{H}}_r>a$. Let $\mathcal{V}_H$ denote the vertices of community $H$.  Declare $H$ to be discovered. Let $\mathcal{C}_k$ again denote the collections of half-edges of $H$ in a community self-loop. Then set $\mathcal{A}_{k+1}=\{a,{a_{H}}_1,\dots,{a_{H}}_r\}$, $\mathcal{S}_{k+1}=\mathcal{S}_k\setminus \{a,{a_{H}}_1,\dots , {a_{H}}_r\}$ and $\mathcal{D}_{k+1}=\mathcal{D}_k\cup \mathcal{V}_H$.
	\end{enumerate}
\end{algorithm}

Algorithm~\ref{alg:explore} discovers one community at each step. When the edges going out of the community create a cycle, double edge or self-loop, the corresponding half-edges are in $\mathcal{B}_k$ or $\mathcal{C}_k$, and they are thrown away. Therefore, at each step, an unexplored community is discovered. Since the communities are found by selecting a half-edge at random, the communities are explored in a size-biased manner with respect to the number of edges going out of the community.
The dead vertices correspond to all vertices inside communities that have already been discovered. We define the additive functional
\begin{equation}
Z_n(k)=\abs{\mathcal{D}_k}
\end{equation}
as the number of vertices that have been discovered up to time $k$.
The exploration process finds an undiscovered community of the HCM at each step, and therefore
\begin{equation}
Z_n(k)=\sum_{i=1}^ks_{(i)},
\end{equation}
where $s_{(i)}$ denotes the size of the $i$th discovered community.

Let $d_{(j)}$ be the inter-community degree of the $j$th explored community. Define $Q_n(k)$ as
\begin{equation}\label{eq:Q}
\begin{aligned}[b]
Q_n(0)&=0,\\
Q_n(i)&=\sum_{j=1}^{i}(d_{(j)}-2-2c_{(j)}),
\end{aligned}
\end{equation}
where $c_{(j)}$ denotes the number of cycles/self-loops or double edges that are found when discovering the $j$th community.
Let $\mathscr{C}_k$ denote the $k$th component that is found by the exploration process, and define
\begin{equation}\label{eq:tau}
\tau_k=\inf\{i:Q_n(i)=-2k\}.
\end{equation}
Then, $\tau_k-\tau_{k-1}$ is the number of communities in component $\mathscr{C}_k$~\cite{dhara2016}, so that
\begin{equation}\label{eq:sizeH}
v^\hie(\mathscr{C}_k)=\tau_{k}-\tau_{k-1}.
\end{equation}
Furthermore, the size of $\mathscr{C}_k$ equals
\begin{equation}\label{eq:sizeZ}
v(\mathscr{C}_i)=Z_n(\tau_{k})-Z_n(\tau_{k-1}).
\end{equation}

By~\cite{dhara2016}, the rescaled process $Q_n(t)$ converges to the reflected version of a Brownian motion with negative parabolic drift. To derive the sizes of the component sizes in the hierarchical configuration model, we now study the convergence of the process $Z_n(k)$:

\begin{lemma}
For any $u\geq 0$,
\begin{equation}
\sup_{t\leq u}\abs{n^{-2/3}Z_n(\lfloor tn^{2/3}\rfloor)- \frac{\mathbb{E}[DS]}{\mathbb{E}[D]}t}\inprob 0.
\end{equation}
\end{lemma}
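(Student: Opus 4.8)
The plan is to analyse the additive functional $Z_n(k)=\sum_{i=1}^k s_{(i)}$ generated by the depth-first exploration of Algorithm~\ref{alg:explore} through a Doob (martingale-plus-compensator) decomposition. Let $\mathcal F_{i}$ be the $\sigma$-field generated by the first $i$ steps of the exploration, and let $\mathcal U_i$ be the random set of communities revealed in those steps. By construction of the algorithm — this is the same exploration as for the CM in~\cite{dhara2016} — a previously undiscovered community is revealed at every step, and, since it is found by following a uniformly chosen inter-community half-edge, conditionally on $\mathcal F_{i-1}$ the revealed community is chosen among the undiscovered ones with probability proportional to its inter-community degree (size-biasing by degree, sampled without replacement). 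Writing $\ell_n=\sum_{j\in[n]}d_{H_j}=n\Ex{D_n}$, $L_{i-1}=\sum_{j\in\mathcal U_{i-1}}d_{H_j}$ and $\Sigma_{i-1}=\sum_{j\in\mathcal U_{i-1}}d_{H_j}s_{H_j}$, this gives
\[
\Ex{s_{(i)}\mid\mathcal F_{i-1}}=\frac{n\Ex{D_nS_n}-\Sigma_{i-1}}{\ell_n-L_{i-1}}.
\]
Set $A_n(k)=\sum_{i=1}^k\Ex{s_{(i)}\mid\mathcal F_{i-1}}$ and $M_n(k)=Z_n(k)-A_n(k)$, an $(\mathcal F_k)$-martingale started at $0$. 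Since $\lfloor tn^{2/3}\rfloor\le un^{2/3}$ for $t\le u$, it suffices to treat $A_n$ and $M_n$ separately on the range $k\le un^{2/3}$.

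For the compensator, the key point is that over the first $O(n^{2/3})$ steps only a vanishing fraction of the half-edges and of the $d_Hs_H$-mass has been consumed. Indeed, $d_{\max}=o(n^{1/3})$ (as noted in the remark after Condition~\ref{cond:graph}), so $L_{i-1}\le un^{2/3}d_{\max}=o(n)$ deterministically; and $\Sigma_{i-1}$ is nondecreasing with $\Ex{\Sigma_{\lceil un^{2/3}\rceil}}=O\!\big(n^{2/3}\,\Ex{D_n^2S_n}\big)=O\!\big(n^{2/3}d_{\max}\,\Ex{D_nS_n}\big)=o(n)$, using $\Ex{D_n^2S_n}\le d_{\max}\Ex{D_nS_n}$ and $\Ex{D_nS_n}\to\Ex{DS}<\infty$; hence $\sup_{i\le un^{2/3}}\Sigma_{i-1}=o_{\sss{\Prob}}(n)$ by Markov. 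Plugging these into the displayed identity, each summand equals $\frac{\Ex{D_nS_n}}{\Ex{D_n}}(1+r_i)$ with $|r_i|$ bounded by a quantity independent of $i$ that is $o_{\sss{\Prob}}(1)$; summing over $i\le\lfloor tn^{2/3}\rfloor$ produces a total error $o_{\sss{\Prob}}(n^{2/3})$, and combined with $\Ex{D_nS_n}/\Ex{D_n}\to\Ex{DS}/\Ex{D}$ (from Conditions~\ref{cond:graph}\ref{cond:EDS}, \ref{cond:size}\ref{cond:ED} and~\ref{cond:size}\ref{cond:PD}) and $n^{-2/3}\lfloor tn^{2/3}\rfloor\to t$ uniformly on $[0,u]$, this yields $\sup_{t\le u}\big|n^{-2/3}A_n(\lfloor tn^{2/3}\rfloor)-\tfrac{\Ex{DS}}{\Ex{D}}t\big|\inprob 0$.

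For the martingale, $\mathrm{Var}(s_{(i)}\mid\mathcal F_{i-1})\le\Ex{s_{(i)}^2\mid\mathcal F_{i-1}}=\frac{\sum_{j\notin\mathcal U_{i-1}}d_{H_j}s_{H_j}^2}{\ell_n-L_{i-1}}\le(1+o(1))\,\frac{\Ex{D_nS_n^2}}{\Ex{D_n}}$ uniformly for $i\le un^{2/3}$, so by Doob's $L^2$-maximal inequality and orthogonality of martingale increments $\Ex{\sup_{k\le un^{2/3}}M_n(k)^2}\le 4\sum_{i\le\lceil un^{2/3}\rceil}\Ex{\mathrm{Var}(s_{(i)}\mid\mathcal F_{i-1})}\le C\,u\,n^{2/3}\,\Ex{D_nS_n^2}/\Ex{D_n}$. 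Since $\Ex{D_nS_n^2}\le s_{\max}\Ex{D_nS_n}$, this is $O(u\,n^{2/3}s_{\max})$, so $n^{-4/3}\Ex{\sup_{k\le un^{2/3}}M_n(k)^2}=O(u\,s_{\max}/n^{2/3})\to 0$ by Condition~\ref{cond:graph}\ref{cond:sd2}. Hence $n^{-2/3}\sup_{k\le un^{2/3}}|M_n(k)|\inprob 0$, and adding the two contributions via $Z_n=M_n+A_n$ gives the claim.

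The part I expect to be the main obstacle is the compensator estimate, specifically controlling $\Sigma_{i-1}=\sum_{j\in\mathcal U_{i-1}}d_{H_j}s_{H_j}$ uniformly over the first $un^{2/3}$ steps: the worst-case bound $\Sigma_{i-1}\le d_{\max}s_{\max}\,i$ is of order $n^{5/3}/\log(n)$ and hopelessly weak, so one must genuinely use that the revealed communities are size-biased by degree — which is what inserts the extra bounded factor $\Ex{D_nS_n}$ into $\Ex{\Sigma_{\lceil un^{2/3}\rceil}}$ and lets $d_{\max}=o(n^{1/3})$ close the gap. The analogous ``use the weighted average, not the worst case'' idea is what makes the numerator of $\Ex{s_{(i)}\mid\mathcal F_{i-1}}$ and the martingale variance bound go through; in the latter it is precisely $\Ex{D_nS_n^2}\le s_{\max}\Ex{D_nS_n}$ combined with $s_{\max}\ll n^{2/3}/\log(n)$ (in fact $s_{\max}=o(n^{2/3})$ would already suffice for this lemma) that is needed.
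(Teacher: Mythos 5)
Your argument is correct, but it is a self-contained reproof of the result that the paper simply imports: the paper's entire proof is an application of \cite[Proposition 29]{dhara2016} with $\alpha=2/3$ and $f_n(i)=s_i$, which directly yields the error bound $O_{\sss{\Prob}}(n^{-1/3}\sqrt{us_{\max}}\vee n^{-1/3}u^2d_{\max})$, after which $d_{\max}=o(n^{1/3})$ and $s_{\max}=o(n^{2/3})$ finish the job. Your Doob decomposition is exactly the mechanism behind that cited proposition --- your martingale term accounts for the $n^{-1/3}\sqrt{us_{\max}}$ contribution and your compensator estimate for the $n^{-1/3}u^2d_{\max}$ contribution, so the two routes are quantitatively identical --- but you carry it out from scratch, which buys a self-contained proof at the cost of one point that you assert rather than justify: the identity $\Ex{s_{(i)}\mid\mathcal F_{i-1}}=(n\Ex{D_nS_n}-\Sigma_{i-1})/(\ell_n-L_{i-1})$ presumes that the communities are revealed in an exact degree-size-biased order with respect to the exploration filtration, and verifying this for Algorithm~\ref{alg:explore} (where the revealed pairing information, e.g.\ which half-edges land in $\mathcal{B}_k$ and $\mathcal{C}_k$, must not bias the identity of the next discovered community) is precisely the bookkeeping that the citation to \cite{dhara2016} is doing for the paper. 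Your closing observations are accurate: the genuinely needed inputs are $\Ex{D_n^2S_n}\leq d_{\max}\Ex{D_nS_n}$ and $\Ex{D_nS_n^2}\leq s_{\max}\Ex{D_nS_n}$, and $s_{\max}=o(n^{2/3})$ already suffices for this lemma --- the extra $1/\log(n)$ in Condition~\ref{cond:graph}\ref{cond:sd2} is only consumed later, in Lemma~\ref{lem:latecomp}.
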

\begin{proof}
We use~\cite[Proposition 29]{dhara2016}, choosing $\alpha=2/3$ and $f_n(i)=s_i$. This yields
\begin{equation}
\sup_{t\leq u}\Big{|}n^{-2/3}\sum_{i=1}^{\lfloor t n^{2/3}\rfloor}s_{(i)}- \frac{\mathbb{E}[DS]}{\mathbb{E}[D]}t\Big{|}=O_\Prob(n^{-1/3}\sqrt{ u s_{\max}}\vee n^{-1/3}u^2d_{\max}).
\end{equation}
Using that $d_{\max}=o(n^{1/3})$ and $s_{\max}= o(n^{2/3})$ by Condition~\ref{cond:graph}\ref{cond:sd2} gives the result.
\end{proof}

\begin{lemma}\label{lem:joint}
For any $u\geq 0$,
\begin{equation}
\left(n^{-1/3}Q_n(tn^{2/3}),n^{-2/3}Z_n(tn^{2/3})\right)_{t\leq u}\indist \left(W^\lambda(t),\frac{\mathbb{E}[DS]}{\mathbb{E}[S]}t\right)_{t\leq u}
\end{equation}
in the $J_1\times J_1$ topology.
\end{lemma}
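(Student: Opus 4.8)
The plan is to deduce the joint convergence from the two marginal convergences that are already available, crucially exploiting that the second coordinate has a \emph{deterministic} limit. The first marginal, $n^{-1/3}Q_n(\lfloor t n^{2/3}\rfloor)\indist W^\lambda(t)$ in the Skorokhod space $(D[0,u],J_1)$, is nothing but the scaling limit of the exploration walk of the underlying configuration model: by construction $Q_n$ in~\eqref{eq:Q} is exactly the CM exploration walk of the macroscopic CM, and since that CM satisfies Conditions~\ref{cond:graph} and~\ref{cond:size}, the convergence is supplied by \cite{dhara2016} (this is where the finite third moment $\Ex{D_n^3}\to\Ex{D^3}$ and the critical window $\nu_D^{\sss{(n)}}=1+\lambda n^{-1/3}+o(n^{-1/3})$ enter). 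The second marginal is the preceding lemma, which gives, for the fixed horizon $u$, $\sup_{t\le u}\abs{n^{-2/3}Z_n(\lfloor t n^{2/3}\rfloor)-z(t)}\inprob 0$, where $z$ is the continuous, non-random function identified there, i.e.\ the deterministic limit of $n^{-2/3}Z_n(\lfloor\,\cdot\,n^{2/3}\rfloor)$.

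To assemble these, write $X_n(t)=n^{-1/3}Q_n(\lfloor t n^{2/3}\rfloor)$ and $Y_n(t)=n^{-2/3}Z_n(\lfloor t n^{2/3}\rfloor)$, both viewed as random elements of the Polish space $(D[0,u],J_1)$. Since the $J_1$ metric is dominated by the uniform metric, the preceding lemma gives $d_{J_1}(Y_n,z)\inprob 0$, i.e.\ $Y_n$ converges in probability in $(D[0,u],J_1)$ to the \emph{constant} $z$. Combined with $X_n\indist W^\lambda$ in $(D[0,u],J_1)$, the standard fact that convergence in distribution together with convergence in probability to a constant gives joint convergence in the product space yields $(X_n,Y_n)\indist(W^\lambda,z)$ in $(D[0,u]\times D[0,u],\,J_1\times J_1)$, which is the claim. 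Concretely, for bounded Lipschitz $h\colon D[0,u]^2\to\mathbb{R}$ one has $\abs{h(X_n,Y_n)-h(X_n,z)}\le\mathrm{Lip}(h)\,d_{J_1}(Y_n,z)\inprob 0$, while $x\mapsto h(x,z)$ is bounded Lipschitz on $D[0,u]$ so $\Ex{h(X_n,z)}\to\Ex{h(W^\lambda,z)}$ by the first marginal; hence $\Ex{h(X_n,Y_n)}\to\Ex{h(W^\lambda,z)}$, and bounded Lipschitz functions are convergence determining on this separable metric space.

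I do not expect a genuine obstacle inside this lemma: the substantive analysis sits upstream, in the uniform law of large numbers for the size-biased partial sums $\sum_{i=1}^{k}s_{(i)}$ proved in the preceding lemma (which rests on \cite[Proposition 29]{dhara2016} together with Condition~\ref{cond:graph}, in particular $s_{\max}\ll n^{2/3}/\log(n)$ and $\Ex{D_nS_n}\to\Ex{DS}$) and in \cite{dhara2016} for the exploration-walk limit. The one point worth stressing is that $X_n$ and $Y_n$ are functions of the \emph{same} exploration and are therefore strongly dependent; what makes the joint statement automatic is precisely that the limit of $Y_n$ is non-random, so the joint limit law factorizes and no additional joint estimate is needed. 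Were the second coordinate to have a random limit, or were one to demand convergence in the stronger $\ltd$ topology rather than $J_1\times J_1$, the exploration analysis would have to be reworked --- which is the route taken for Theorems~\ref{thm:crit} and~\ref{thm:ltd}. Finally, a minor remark: in the second coordinate one could replace $J_1$ by the uniform topology, its limit being continuous, but retaining $J_1$ is harmless and matches the form in which \cite{dhara2016} provides the input.
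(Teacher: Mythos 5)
Your proposal is correct and follows essentially the same route as the paper: the paper also cites \cite[Thm.~8]{dhara2016} for the marginal convergence of $n^{-1/3}Q_n(tn^{2/3})$, invokes the preceding lemma for the convergence in probability of $n^{-2/3}Z_n(tn^{2/3})$ to its deterministic limit, and concludes by ``an analogy of Slutsky's theorem for processes,'' which is exactly the bounded-Lipschitz argument you spell out. Your version is simply a more detailed writing-out of the same one-line proof.
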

\begin{proof}
Since $t\mapsto\frac{\mathbb{E}[DS]}{\mathbb{E}[S]}t$ is deterministic, $\left(n^{-1/3}Q_n(tn^{2/3})\right)\indist \left(B^{\lambda}(t)\right)$ by~\cite[Thm. 8]{dhara2016} and $\left(n^{-2/3}Z_n(tn^{2/3})\right)\inprob \left(\frac{\mathbb{E}[DS]}{\mathbb{E}[S]}t\right)$ in the $J_1$ topology, an analogy of Slutsky's theorem for processes proves the lemma.
\end{proof}

By~\cite{dhara2016}, the excursion lengths of $\bar{Q}_n$ converge to $\boldsymbol{\gamma}^\lambda$, where
\begin{equation}
\bar{Q}_n(t)=n^{-1/3}Q_n(tn^{2/3}).
\end{equation}
Since the excursions of $Q_n$ encode the number of communities in the components, and $Z_n$ encodes the sum of the corresponding community sizes, Lemma~\ref{lem:joint} shows that the components that have been discovered before time $Tn^{2/3}$ satisfy~\eqref{eq:critsize}.

\subsection{Sizes of components that are discovered late and convergence in product topology}\label{sec:late}

By Lemma~\ref{lem:joint}, the component sizes that have been discovered up to time $Tn^{2/3}$ converge to a constant times the excursion lengths of a reflected Brownian motion with parabolic drift. To prove that the ordered components of the HCM converge, we need to show that the probability of encountering a large component after time $Tn^{2/3}$, is small. From~\cite[Lemma 14]{dhara2016}, we know that for every $\eta>0$
\begin{equation}
\lim_{T\to\infty}\limsup_{n\to\infty}\Prob\left(v^\hie({\mathscr{C}^{\sss{\geq T}}_{\text{max}}})>\eta n^{2/3}\right)=0,
\end{equation}
where $\mathscr{C}^{\sss{\geq T}}_{\text{max}}$ is the largest component found after time $Tn^{2/3}$. Therefore, we only need to show that the probability that there exists a hierarchical component smaller than $\eta n^{2/3}$ such that its size is larger than $\delta n^{2/3}$ is small when $\eta\ll\delta$. We prove that the probability that a given component $\mathscr{C}$ satisfies this property is exponentially small in $n$. Therefore, the probability that such a component exists is also small.

We first explore the HCM according to Algorithm~\ref{alg:explore} until the first time after $Tn^{2/3}$ a component has been explored. Then, we remove all components that have been found so far. We denote the resulting graph by $G^{\sss{\geq T}}$. The probability $p_{k,s}^{\sss{\geq T}}(n)$ that a community in $G^{\sss{\geq T}}$ has degree $k$ and size $s$ can be bounded as
\begin{equation}\label{eq:pksgt}
p_{k,s}^{\sss{\geq T}}(n)\leq \frac{n_{k,s}}{n-Tn^{2/3}}=p_{k,s}^{\sss{(n)}}(1+o(n^{-1/3})),
\end{equation}
where $n_{k,s}$ denotes the number of communities of size $s$ and inter-community degree $k$.
Therefore, the expected size of a community in $G^{\sss{\geq T}}$, $\Ex{S^{\sss{\geq T}}}<\infty$, and similarly $\Ex{D^{\sss{\geq T}}}<\infty$ and $\Ex{(DS)^{\sss{\geq T}}}<\infty$.
Now, we start exploring $G^{\sss{\geq T}}$ as in Algorithm~\ref{alg:explore}. We want to show that with high probability $G^{\sss{\geq T}}$ does not contain components larger than $\delta n^{2/3}$. By~\cite{dhara2016}, the CM with high probability does not contain any components of size $\eta n^{2/3}$ that are discovered after time $Tn^{2/3}$. Therefore, with high probability $G^{\sss{\geq T}}$ does not contain components with more than $\eta n^{2/3}$ communities.

We now explore $G^{\sss{\geq T}}$ using Algorithm~\ref{alg:explore} for $\eta n^{2/3}$ steps, and investigate the sum of the community sizes that have been explored until time $\eta n^{2/3}$.
Lemma~\ref{lem:exps} shows that the probability of finding more than $\delta n^{2/3}$ vertices after exploring the first $\eta n^{2/3}$ communities is very small. This is a key step in proving Theorem~\ref{thm:crit}.
\begin{lemma}\label{lem:exps}
	For any $\eta, \delta>0$ satisfying $\delta>2\eta \Ex{DS}/\Ex{D}$,
	\begin{equation}
		\Prob\Bigg(\sum_{i=1}^{\eta n^{2/3}}s_{(i)}> \delta n^{2/3}\Bigg)\leq \me^{ -\zeta n^{2/3}/s_{\max}},
	\end{equation}
	for some $\zeta>0$.
\end{lemma}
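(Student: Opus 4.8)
The plan is to establish the exponential bound via a size-biased reordering of the community sizes combined with a concentration (Chernoff-type) argument, using the fact that Algorithm~\ref{alg:explore} explores communities in a size-biased manner with respect to the inter-community degree. First I would recall that, since each community is discovered by picking an active (or sleeping) inter-community half-edge uniformly at random, the $i$th explored community $H_{(i)}$ is, to leading order, distributed as a $D$-size-biased pick from the community distribution: $\Prob(\text{explore a community of size }s\text{ and degree }k)\approx k p_{k,s}/\Ex{D}$. In particular $\Ex{s_{(i)}}\approx \Ex{DS}/\Ex{D}$, which is exactly why the threshold $\delta>2\eta\Ex{DS}/\Ex{D}$ is the natural one: the mean of $\sum_{i=1}^{\eta n^{2/3}}s_{(i)}$ is $\approx \eta n^{2/3}\Ex{DS}/\Ex{D}$, comfortably below $\delta n^{2/3}$, so we are in a large-deviations regime and want a bound exponential in $n^{2/3}$, with the $1/s_{\max}$ loss coming from the fact that individual increments $s_{(i)}$ can be as large as $s_{\max}$.

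Next I would make the size-biased picture rigorous enough to apply an exponential Markov inequality. The cleanest route is to dominate: run the exploration and, at each of the first $\eta n^{2/3}$ steps, the conditional law of the newly discovered community's size given the past is bounded above (in the stochastic-dominance sense, after the harmless $(1+o(n^{-1/3}))$ correction as in~\eqref{eq:pksgt}) by the $D$-size-biased size distribution with a slightly inflated mean; here one uses $\eta n^{2/3}=o(n)$ so that depleting the pool of communities changes the sampling probabilities only by a factor $1+o(1)$. Then for $\theta>0$ I would bound
\begin{equation}
\Prob\Bigg(\sum_{i=1}^{\eta n^{2/3}}s_{(i)}>\delta n^{2/3}\Bigg)\leq \me^{-\theta\delta n^{2/3}}\,\Ex{\me^{\theta s_{(i)}}}^{\eta n^{2/3}},
\end{equation}
where $\Ex{\me^{\theta s_{(i)}}}$ is with respect to the (dominating) size-biased law. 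Since each $s_{(i)}\leq s_{\max}$, one has the elementary estimate $\me^{\theta s}\leq 1+\theta s\,\me^{\theta s_{\max}}$, so $\Ex{\me^{\theta s_{(i)}}}\leq 1+\theta\me^{\theta s_{\max}}\tfrac{\Ex{DS}}{\Ex{D}}(1+o(1))\leq \exp\!\big(\theta\me^{\theta s_{\max}}\tfrac{\Ex{DS}}{\Ex{D}}(1+o(1))\big)$. Choosing $\theta=c/s_{\max}$ for a small constant $c>0$ makes $\me^{\theta s_{\max}}=\me^{c}$ bounded, giving an overall bound of the form $\exp\!\big(-\tfrac{c}{s_{\max}}n^{2/3}(\delta-\me^{c}\eta\tfrac{\Ex{DS}}{\Ex{D}}(1+o(1)))\big)$; since $\delta>2\eta\Ex{DS}/\Ex{D}$ we may fix $c$ small enough that the bracket is bounded below by a positive constant, yielding the claimed bound with $\zeta>0$.

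The main obstacle is making precise the statement that the increments $s_{(i)}$ are stochastically dominated by an i.i.d.\ size-biased sequence uniformly over the first $\eta n^{2/3}$ steps, conditionally on the whole past of the exploration --- the subtlety being that the set of still-undiscovered communities, and hence the sampling distribution, evolves in a history-dependent way, and one must check that over $\eta n^{2/3}=o(n)$ steps the sampling weights stay within a $(1+o(1))$ factor of $k p_{k,s}^{\sss{(n)}}/\Ex{D_n}$ regardless of which communities have been removed (the worst case being that all removed communities had large degree, depleting the half-edge pool; but that pool has size $\ell_n=\Theta(n)$ and only $O(\eta n^{2/3} d_{\max})=O(n^{2/3+1/3})=o(n)$ wait --- one needs $\eta n^{2/3}d_{\max}=o(\ell_n)$, which holds since $d_{\max}=o(n^{1/3})$). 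Once this domination is in place, the Chernoff computation above is routine; I would also note that the same argument, applied with $f_n(i)=\mathsf{SP}_i$ in place of $s_i$, gives the analogous tail bound needed for the surplus-edge refinement~\eqref{eq:surp}.
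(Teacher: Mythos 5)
Your proposal is correct, but it takes a genuinely different route from the paper. The paper does not argue by stochastic domination plus a Chernoff bound on a tilted sum; instead it embeds the size-biased exploration order into continuous time via independent exponential clocks $T_i$ with rate $d_i/\ell_n$, so that $\sum_{i\leq M(k)}s_{(i)}=\sum_{i\in[n]}s_i\ind{T_i\leq k}$ becomes a sum of \emph{independent} terms; it pays for this with an auxiliary Chernoff estimate showing $\Prob(M(2k)<k)$ is exponentially small, and then applies a Freedman-type supermartingale inequality (with conditional variance bounded by $s_{\max}\Ex{D_nS_n}/\Ex{D_n}$ and increments bounded by $s_{\max}$) to $Y_k=\sum_{i=1}^{M(k)}s_{(i)}-k\Ex{D_nS_n}/\Ex{D_n}$, which is exactly where the exponent $n^{2/3}/s_{\max}$ comes from. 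Your route replaces the clock coupling by a direct without-replacement domination: the conditional law of $s_{(i)}$ given the past is dominated by a $(1+o(1))$-inflated degree-biased size distribution because only $O(\eta n^{2/3}d_{\max})=o(\ell_n)$ half-edges are consumed in the first $\eta n^{2/3}$ steps, and then the tilt $\theta=c/s_{\max}$ together with $\me^{\theta s}\leq 1+\theta s\,\me^{\theta s_{\max}}$ and $\delta>2\eta\Ex{DS}/\Ex{D}$ yields the same exponent $\zeta n^{2/3}/s_{\max}$. You correctly identify the domination step as the only real obstacle and give the right reason it goes through ($d_{\max}=o(n^{1/3})$, $\ell_n=\Theta(n)$); to make it fully rigorous you would define a single dominating survival function $\Prob(\hat S>x)=\min\bigl(1,(1+\epsilon_n)\sum_{j:s_j>x}d_j/\ell_n\bigr)$ and iterate the conditional bound $\Ex{\me^{\theta s_{(i)}}\mid\mathscr{F}_{i-1}}\leq\Ex{\me^{\theta\hat S}}$, since the $s_{(i)}$ are not i.i.d. In short: the paper's device buys exact independence at the cost of controlling $M(2k)$ separately; yours buys a shorter, more elementary computation at the cost of stating and proving the domination lemma carefully. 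Both are valid and give the claimed bound.
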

 \begin{proof}
 	Let $T_i$ be independent exponential random variables with rate $d_i/\ell_n$. Furthermore, let $M(k)=\#\{j:T_j\leq k \}$. Then,
 	\begin{equation}
 	\Ex{M(k)}=\sum_{i\in [n]}\left(1-\me^{-kd_i/\ell_n}\right)\leq k,
 	\end{equation}
 	using that $1-\me^{-x}\leq x$. Similarly, using that $1-\me^{-x}\geq x-x^2/2$,
 	\begin{equation}\label{eq:exmup}
 	\Ex{M(k)}\geq \sum_{i\in [n]}\left(\frac{kd_i}{\ell_n}-\frac{k^2d_i^2}{\ell_n^2}\right)=k-\frac{k^2\Ex{D_n^2}}{n\Ex{D_n}}.
 	\end{equation}
 	Furthermore, for $n$ large enough, if $k=o(n)$,
 	\begin{equation}\label{eq:chern}
 	\begin{aligned}[b]
	 	\Prob(M(2k)<k)&\leq \Prob\left(M(2k)<\frac32 k\left(1-\frac{k\Ex{D_n}}{n\Ex{D_n^2}}\right)\right)\\
	 	& \leq \Prob\left(M(2k)<\frac34\Ex{M(2k)}\right) =\Prob(\me^{-tM(2k)}>\me^{-\frac 34 t \Ex{M(2k)}})\\
	 	&\leq \me^{\frac 34 t \Ex{M(2k)}}\Ex{\me^{-tM(2k)}},
 	\end{aligned}
 	\end{equation}
 	for any $t>0$, where the last inequality uses the Markov inequality. Let $q_i=1-\me^{-\frac{2kd_i}{\ell_n}}$.
 	Since $M(2k)$ is a sum of independent indicator variables,
 	\begin{equation}
 	\Ex{\me^{-tM(2k)}}=\prod_{i\in[n]}\left(1+q_i(\me^{-t}-1)\right)\leq \prod_{i\in[n]}\me^{q_i(\me^{-t}-1)}=\me^{\Ex{M(2k)}(\me^{-t}-1)},
 	\end{equation}
 	where the inequality uses that $1+x\leq \me^{x}$, with $x=q_i(\me^{-t}-1)$. Plugging this into~\eqref{eq:chern} and setting $t=-\log(\frac34)$ yields
 	\begin{equation}
 	\begin{aligned}[b]
 		\Prob(M(2k)<k)&\leq \me^{\Ex{M(2k)}(\frac 34 t+\me^{-t}-1)}= \me^{\Ex{M(2k)}(-\frac 34 \log(\frac 34)-\frac 14)}.
 	\end{aligned}
 	\end{equation}
 	Then, using that $-(1-x)\log(1-x)\leq x-\frac{x^2}{2}$,
 	\begin{equation}
 	\Prob(M(2k)<k)\leq\me^{-\frac{\Ex{M(2k)}}{32}}\leq \me^{-Ck(1-\frac kn)},
 	\end{equation}
 	for some $C>0$, 
 	where we used~\eqref{eq:exmup}.
 	
 	Consider $Y_k=\sum_{i=1}^{M(k)}s_{(i)}-k\frac{\Ex{D_nS_n}}{\Ex{D_n}}$. Let $\mathscr{F}_k$ denote the sigma-field generated by the information revealed up to time $k$. Then,
 	\begin{equation}
 	\begin{aligned}[b]
	 	\Ex{Y_k\mid \mathscr{F}_{k-1}}&=Y_{k-1}+\sum_{i\in[n]}s_i\Prob(T_i\in[k-1,k])-\frac{\Ex{D_nS_n}}{\Ex{D_n}}\\
	 	&=Y_{k-1}+\sum_{i\in[n]}s_i(1-\me^{-d_i/\ell_n})-\frac{\Ex{D_nS_n}}{\Ex{D_n}}\leq Y_{k-1}.
 	\end{aligned}
 	\end{equation}
 Therefore, $Y_k$ is a supermartingale. Furthermore,
 \begin{equation}
 \begin{aligned}[b]
	 \text{Var}(Y_k\mid \mathscr{F}_{k-1})&=\text{Var}(\sum_{i\in [n]}s_i\ind{T_i\in[k-1,k]})=\sum_{i\in[n]}s_i^2(1-\me^{-d_i/\ell_n})\me^{-d_i/\ell_n}\\
	 &\leq \sum_{i\in[n]}s_i^2d_i/\ell_n\leq s_{\max}\frac{\Ex{D_nS_n}}{\Ex{D_n}}.
 \end{aligned}
 \end{equation}
Thus, we can apply~\cite[Thm. 7.3]{chung2006}, which states that for a supermartingale $X$ with $\text{Var}(X_i\mid \mathscr{F}_{i-1})\leq \sigma^2_i$ and $X_i-\Ex{X_i\mid \mathscr{F}_{i-1}}\leq K$ for all $i$,
\begin{equation}
\Prob(X_n\geq X_0+t)\leq\exp\left(-\frac{t^2}{\sum_{i=1}^{n}\sigma^2_i+Kt/3}\right).
\end{equation}
Applying this to $Y_{\lfloor 2\eta n^{2/3}\rfloor}$ with $Y_0=0$, $\sigma_i^2=s_{\max}$ and $K=s_{\max}$, we obtain
\begin{equation}
\Prob(Y_{\lfloor 2\eta n^{2/3}\rfloor}>t)\leq \exp\left(-\frac{t^2}{2 \eta n^{2/3}s_{\max}+s_{\max}t/3}\right).
\end{equation}
Because by assumption $\xi=\delta-2\eta\Ex{DS}/\Ex{D}>0$,
\begin{equation}
\begin{aligned}[b]
	\Prob\Bigg(\sum_{i=1}^{M(2n^{2/3}\eta)}s_{(i)}>\delta n^{2/3} \Bigg)&=\Prob\left(Y_{\lfloor 2n^{2/3}\eta\rfloor}>\delta n^{2/3}-2\eta n^{2/3}\frac{\Ex{D_nS_n}}{\Ex{D_n}}\right)\\
	&=\Prob\left(Y_{\lfloor 2n^{2/3}\eta\rfloor}>\xi n^{2/3}\right)\\
	&\leq  \exp\Big(-\frac{\xi^2n^{4/3}}{2\eta n^{2/3}s_{\max}+s_{\max}\xi n^{2/3}/3}\Big)\leq \me^{-\zeta n^{2/3}/s_{\max}},
\end{aligned}
\end{equation}
for some $\zeta>0$.
Thus,
\begin{equation}
\begin{aligned}[b]
\Prob\Bigg(\sum_{i=1}^{\eta n^{2/3}}s_{(i)}> \delta n^{2/3}\Bigg)&\leq \Prob\Bigg(\sum_{i=1}^{M(2\eta n^{2/3})}s_{(i)}> \delta n^{2/3}\Bigg)+\Prob(M(2\eta n^{2/3})<\eta n^{2/3})\\
&\leq \me^{-\zeta n^{2/3}/s_{\max}}+\me^{-Cn^{2/3}(1-n^{-1/3})},
\end{aligned}
\end{equation}
which proves the lemma.
\end{proof}

Using the previous lemma, we can now show that the probability that a component of size $\delta n^{2/3}$ is found after time $Tn^{2/3}$ is small for $T$ large enough:
\begin{lemma}\label{lem:latecomp}
	Let $\mathscr{C}_{\max}^{\sss{\geq T}}$ denote the largest component of a hierarchical configuration model satisfying \textup{Conditions}~\ref{cond:graph} and~\ref{cond:size}, of which the first vertex is explored after time $Tn^{2/3}$. Then, for all $\delta>0$,
	\begin{equation}
	\lim_{T\to\infty}\limsup_{n\to\infty}\Prob(v(\mathscr{C}_{\max}^{\sss{\geq T}})\geq \delta n^{2/3})=0.
	\end{equation}
\end{lemma}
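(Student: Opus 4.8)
The plan is to lift the configuration‑model estimate on the number of \emph{communities} in late components to an estimate on the number of \emph{vertices}, by pairing it with the exponential bound of Lemma~\ref{lem:exps} and a crude union bound over the at most $n$ components of $G^{\sss{\geq T}}$; the factor $\log n$ in Condition~\ref{cond:graph}\ref{cond:sd2} is exactly what makes this union bound vanish. Concretely, fix $\delta>0$ and choose $\eta>0$ small enough that $2\eta\,\Ex{DS}/\Ex{D}<\delta$. By the configuration‑model estimate quoted just before Lemma~\ref{lem:exps} (from \cite[Lemma~14]{dhara2016}), $\lim_{T\to\infty}\limsup_{n\to\infty}\Prob\big(v^\hie(\mathscr{C}^{\sss{\geq T}}_{\max})>\eta n^{2/3}\big)=0$, so it suffices to bound, for each fixed $T$, the probability of the event $\mathcal{B}_n$ that some component found after time $Tn^{2/3}$ has $v^\hie(\mathscr{C})\le\eta n^{2/3}$ but $v(\mathscr{C})\ge\delta n^{2/3}$. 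On $\mathcal{B}_n$, when Algorithm~\ref{alg:explore} is run on $G^{\sss{\geq T}}$ the offending component occupies a block of at most $\eta n^{2/3}$ consecutive steps, so $\sum_{i=m+1}^{m+\eta n^{2/3}}s_{(i)}\ge\delta n^{2/3}$ for some $m\ge0$, where $s_{(i)}$ is the size of the $i$th community discovered in $G^{\sss{\geq T}}$.

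The key step is an exponential bound for each fixed $m$. The cleanest route is to couple the discovery order of $G^{\sss{\geq T}}$ with the order statistics of independent exponentials $T_i\sim\mathrm{Exp}(d_i/\ell^{\sss{\geq T}})$, exactly as in the proof of Lemma~\ref{lem:exps}: since the communities are revealed size‑biased by their number of outgoing half‑edges, the $i$th discovered community is the one with the $i$th smallest $T_i$. On the event that the deterministic window $(u,u+3\eta n^{2/3}]$ contains at least $\eta n^{2/3}$ of the $T_i$ one has $\sum_{i=m+1}^{m+\eta n^{2/3}}s_{(i)}\le\sum_{i:\,u<T_i\le u+3\eta n^{2/3}}s_i$, and the right‑hand side is controlled by the increment over $(u,u+3\eta n^{2/3}]$ of the supermartingale $Y_k=\sum_{i:T_i\le k}s_i-k\,\Ex{D^{\sss{\geq T}}S^{\sss{\geq T}}}/\Ex{D^{\sss{\geq T}}}$, whose per‑step conditional variance is at most $s_{\max}\Ex{D^{\sss{\geq T}}S^{\sss{\geq T}}}/\Ex{D^{\sss{\geq T}}}$. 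Feeding this into the Chung–Lu inequality as in Lemma~\ref{lem:exps} and using $\Ex{D^{\sss{\geq T}}S^{\sss{\geq T}}}/\Ex{D^{\sss{\geq T}}}=\Ex{DS}/\Ex{D}(1+o(1))$ together with $2\eta\Ex{DS}/\Ex{D}<\delta$ yields $\Prob\big(\sum_{i:\,u<T_i\le u+3\eta n^{2/3}}s_i\ge\delta n^{2/3}\big)\le\me^{-\zeta n^{2/3}/s_{\max}}$ for a $\zeta>0$ independent of $u$. A union bound over $m\in\{0,1,\dots,n\}$ then gives $\Prob(\mathcal{B}_n)\le (n+1)\me^{-\zeta n^{2/3}/s_{\max}}+o(1)=\me^{\log n-\zeta n^{2/3}/s_{\max}}+o(1)$, which tends to $0$ because $s_{\max}\log n=o(n^{2/3})$ by Condition~\ref{cond:graph}\ref{cond:sd2}; letting $T\to\infty$ then kills the remaining $o(1)$ and completes the proof.

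The step that needs genuine care, and which I expect to be the main obstacle, is the \emph{uniformity in $m$}: I must show that the same exponential bound applies at \emph{every} starting point of the exploration, including windows near the very end where only communities of small inter‑community degree remain. The virtue of the $T_i$‑coupling above is that the drift of $Y_k$ is the \emph{fixed} ratio $\Ex{D^{\sss{\geq T}}S^{\sss{\geq T}}}/\Ex{D^{\sss{\geq T}}}$ of $G^{\sss{\geq T}}$ and therefore never deteriorates as the exploration proceeds — this is what avoids the awkward possibility that, after removing many communities, the conditional size‑bias of the remaining graph inflates the drift. What is left is the purely geometric claim that, with high probability and uniformly in $u$ up to the time by which all but $\eta n^{2/3}$ of the finite $T_i$ have occurred, the window $(u,u+3\eta n^{2/3}]$ contains at least $\eta n^{2/3}$ of the $T_i$; since $\Ex{M(u+3\eta n^{2/3})-M(u)}$ is of order $\eta n^{2/3}$ uniformly in that range, this follows from a Chernoff bound plus a union bound over a grid of spacing $\eta n^{2/3}$, costing only $O(n^{1/3})$ terms. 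Windows beyond that range explore only the last $\le\eta n^{2/3}$ communities, which are disproportionately of inter‑community degree $1$; since $\Ex{S_n\ind{D_n=1}}$ is bounded (and the order among degree‑one communities is uniform), their total size is $O(\eta n^{2/3})<\delta n^{2/3}$, so these windows are harmless and the bookkeeping closes.
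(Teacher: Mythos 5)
Your proposal is correct and follows essentially the same route as the paper: condition on the event $v^\hie(\mathscr{C}^{\sss{\geq T}}_{\max})\leq\eta n^{2/3}$ supplied by \cite[Lemma 14]{dhara2016}, control the total community size accumulated over $\eta n^{2/3}$ exploration steps via the exponential-clock supermartingale bound of Lemma~\ref{lem:exps}, and close with a union bound over at most $n$ candidates, which vanishes because $s_{\max}\log n=o(n^{2/3})$. The only organizational difference is that the paper unions over components and restarts the exploration at a vertex of each candidate component, so Lemma~\ref{lem:exps} applies verbatim from time zero, whereas you union over windows of a single exploration run and must separately argue uniformity in the window position (including the late windows, where Condition~\ref{cond:size}\ref{cond:PD} provides the dilution you invoke); both work, but the paper's version avoids that extra bookkeeping.
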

\begin{proof}
	We condition on the size of the components of the underlying configuration model. Choose $\eta>0$ satisfying $\delta>2\eta \Ex{DS}/\Ex{D}$.
	Then,
	\begin{equation}\label{eq:cmax}
	\begin{aligned}[b]
	\Prob&(v(\mathscr{C}_{\max}^{\sss{\geq T}})>\delta n^{2/3})\\
	&=\Prob(v(\mathscr{C}_{\max}^{\sss{\geq T}})>\delta n^{2/3} \mid v^\hie({\mathscr{C}^{\sss{\geq T}}_{\text{max}}})\leq \eta n^{2/3})\Prob (v^\hie({\mathscr{C}^{\sss{\geq T}}_{\text{max}}})\leq \eta n^{2/3})\\
	&\quad+\Prob(v(\mathscr{C}_{\max}^{\sss{\geq T}})>\delta n^{2/3} \mid v^\hie({\mathscr{C}^{\sss{\geq T}}_{\text{max}}})> \eta n^{2/3})\Prob (v^\hie({\mathscr{C}^{\sss{\geq T}}_{\text{max}}})> \eta n^{2/3})\\
	&\leq \Prob(v(\mathscr{C}_{\max}^{\sss{\geq T}})>\delta n^{2/3} \mid v^\hie({\mathscr{C}^{\sss{\geq T}}_{\text{max}}})\leq \eta n^{2/3})+\Prob (v^\hie({\mathscr{C}^{\sss{\geq T}}_{\text{max}}})> \eta n^{2/3}).
	\end{aligned}
	\end{equation}
	By~\cite{dhara2016}, for any $\eta>0$,
	\begin{equation}\label{eq:Cmaxcm}
	\lim_{T\to\infty}\limsup_{n\to\infty}\Prob (v^\hie({\mathscr{C}^{\sss{\geq T}}_{\text{max}}})> \eta n^{2/3})=0,
	\end{equation}
	so that the second term in~\eqref{eq:cmax} tends to zero.
	
	Now we study the first term in~\eqref{eq:cmax}.
	Given any component $\mathscr{C}$, we start exploring at a vertex of that component, until time $\eta n^{2/3}$. In Lemma~\ref{lem:exps}, the probability that more than $\delta n^{2/3}$ vertices have been found at time $\eta n^{2/3}$ is quite small. Furthermore, we know that $\mathscr{C}$ has been fully explored, since $v^\hie(\mathscr{C}_{\max}^{\sss{\geq T}})<\eta n^{2/3}$.
	 Then, by the union bound and by Lemma~\ref{lem:exps},
	\begin{equation}
	\begin{aligned}[b]
	\Prob\left(v(\mathscr{C}_{\max}^{\sss{\geq T}}\right)>\delta n^{2/3} &\mid v^\hie({\mathscr{C}^{\sss{\geq T}}_{\text{max}}})\leq \eta n^{2/3})\\
&	\leq \sum_{j=1}^{n}\Prob(v(\mathscr{C}_{j})>\delta n^{2/3} \mid v^\hie({\mathscr{C}^{\sss{\geq T}}_{\text{max}}})\leq \eta n^{2/3})\\
	&= \frac{\sum_{j=1}^{n}\Prob(v(\mathscr{C}_{j})>\delta n^{2/3} , v^\hie({\mathscr{C}^{\sss{\geq T}}_{\text{max}}})\leq \eta n^{2/3})}{\Prob(v^\hie({\mathscr{C}^{\sss{\geq T}}_{\text{max}}})\leq \eta n^{2/3})}\\
		&\leq \frac{\sum_{j=1}^{n}\Prob(v(\mathscr{C}_{j})>\delta n^{2/3} , v^\hie({\mathscr{C}_j^{\sss{\geq T}}})\leq \eta n^{2/3})}{\Prob(v^\hie({\mathscr{C}^{\sss{\geq T}}_{\text{max}}})\leq \eta n^{2/3})}\\
	&\leq \frac{\sum_{j=1}^{n}\Prob(v(\mathscr{C}_{j})>\delta n^{2/3} \mid v^\hie({\mathscr{C}_j^{\sss{\geq T}}})\leq \eta n^{2/3})}{\Prob(v^\hie({\mathscr{C}^{\sss{\geq T}}_{\text{max}}})\leq \eta n^{2/3})}\\
	&\leq \frac{n \mathrm{e}^{-\zeta n^{2/3}/s_{\max}}}{\Prob(v^\hie({\mathscr{C}^{\sss{\geq T}}_{\text{max}}})\leq \eta n^{2/3})},
	\end{aligned}
	\end{equation}
	for some $\zeta>0$.
	Since $s_{\max} \ll n^{2/3}/\log(n)$, using~\eqref{eq:Cmaxcm} and taking limits proves the lemma.
\end{proof}

\begin{proof}[Proof of Theorem~\ref{thm:crit}]
	By~\cite{dhara2016}, the excursions of the process $\bar{Q}_n(t)$ defined in~\eqref{eq:Q} converge to $\boldsymbol{\gamma}^\lambda$, the excursions of a reflected Brownian motion with parabolic drift. Then, by Lemma~\ref{lem:joint} and~\eqref{eq:sizeZ}, the component sizes of the HCM that have been discovered up to time $Tn^{2/3}$ for some $T>0$ converge to $\Ex{DS}/\Ex{D}\boldsymbol{\gamma}^\lambda$. Combining this with Lemma~\ref{lem:latecomp} then shows that
	\begin{equation}
	n^{-2/3}(v(\mathscr{C}_{(j)}))_{j\geq 1}\to \frac{\Ex{DS}}{\Ex{S}} \boldsymbol{\gamma}^{\lambda},
	\end{equation}
	in the product topology. 
	Then, using that $N/n\to \Ex{S}$ completes the proof of Theorem~\ref{thm:crit}.
\end{proof}

\subsection{Convergence in $\ltd$ topology: Proof of \textup{Theorem}~\ref{thm:ltd}.}\label{sec:ltd}
To prove Theorem~\ref{thm:ltd}, we show that the probability that a uniformly chosen vertex is in a large component is small, by using the Markov inequality. Thus, we need to bound the expected component size of a uniformly chosen vertex in a HCM. To do this, we bound the expected component size of a uniformly chosen community of size $s$ and inter-community degree $k$ in Lemma~\ref{lem:Evks}. To prove Lemma~\ref{lem:Evks}, we first count the number of paths in the macroscopic configuration model in Lemma~\ref{lem:path}: the number of paths from community to community, ignoring the internal community structures.
 		Let $P_{L}^{\sss{(H_0)}}$ be the set of all macroscopic paths of length $L$ in a HCM, starting from community $H_0$. Furthermore, define $PW_{L}^{\sss{(H_0)}}$ as the number of macroscopic paths of length $L$, starting in $H_0$, weighted by the size of the last community, i.e.,
 		\begin{equation}
	 		PW_{L}^{\sss{(H_0)}}=\sum_{	P_{L}^{\sss{(H_0)}}}s_{\text{last community}}.
 		\end{equation}
 		
 	\begin{lemma}\label{lem:path}
 		For any $L<\frac14n$ and for some $K>0$,
 		\begin{equation}
 			\Ex{PW_{L}^{\sss{(H_0)}}}\leq K\frac{\Ex{D_nS_n}}{\Ex{D_n}} d_{H_0}\nu_n^{L-1}.
 		\end{equation}
 	\end{lemma}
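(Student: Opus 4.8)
The plan is a first‑moment (path‑counting) computation on the \emph{macroscopic} configuration model, keeping track of the extra weight $s_{\text{last community}}$ at the end of each path. First I would write $PW_L^{\sss{(H_0)}}$ as a sum over all macroscopic walks $H_0\to H_1\to\cdots\to H_L$ realized by the pairing (replacing paths by walks only increases the count), so that
\[
\mathbb{E}\big[PW_L^{\sss{(H_0)}}\big]=\sum_{(H_1,\dots,H_L)} s_{H_L}\,\mathbb{E}\big[\#\{\text{realizations of }H_0H_1\cdots H_L\}\big].
\]
For a fixed ordered tuple of communities, a realization is a choice of incident half‑edges together with the event that the $L$ resulting disjoint pairs are all edges of the matching: one half‑edge at $H_0$ ($d_{H_0}$ choices), an ordered pair of distinct half‑edges at each internal $H_i$, $1\le i\le L-1$ ($d_{H_i}(d_{H_i}-1)$ choices), and one half‑edge at $H_L$ ($d_{H_L}$ choices), while in a uniform matching of $\ell_n$ half‑edges the required $L$ pairs all occur with probability $\prod_{j=0}^{L-1}(\ell_n-2j-1)^{-1}$.

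Next I would sum over the community tuples. Dropping the distinctness constraint only increases the sum, and the sum then factorizes: each internal factor contributes $\sum_H d_H(d_H-1)=n\mathbb{E}[D_n(D_n-1)]$ and the final (weighted) factor contributes $\sum_H d_H s_H=n\mathbb{E}[D_nS_n]$. Using $\ell_n=n\mathbb{E}[D_n]$ and $\nu_n=\mathbb{E}[D_n(D_n-1)]/\mathbb{E}[D_n]$ this yields
\[
\mathbb{E}\big[PW_L^{\sss{(H_0)}}\big]\le d_{H_0}\,\frac{\mathbb{E}[D_nS_n]}{\mathbb{E}[D_n]}\,\nu_n^{L-1}\,\prod_{j=0}^{L-1}\frac{\ell_n}{\ell_n-2j-1},
\]
so it remains to bound the ``depletion correction'' $\prod_{j=0}^{L-1}\ell_n/(\ell_n-2j-1)$ by a constant $K$. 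This is where $L<\tfrac14 n$ is used: since $\Prob(D=0)<1$ we have $\ell_n\ge c n$ for some $c>0$, so every denominator is a positive multiple of $\ell_n$ (and for $L>\ell_n/2$ no such path exists, so $PW_L^{\sss{(H_0)}}=0$ and the bound is trivial); one then controls the product of the at most $L$ factors, each at most $\ell_n/(\ell_n-2L)$, by the elementary inequality $-\log(1-x)\le x/(1-x)$. As a sanity check the case $L=1$ gives $\mathbb{E}[PW_1^{\sss{(H_0)}}]=d_{H_0}\,\mathbb{E}[D_nS_n]/\mathbb{E}[D_n]\cdot \ell_n/(\ell_n-1)$, matching the claimed form.

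The delicate point is precisely this last estimate, because the crude bound $\prod_j\ell_n/(\ell_n-2j-1)\le(\ell_n/(\ell_n-2L))^L$ is only genuinely $O(1)$ when $L=O(\sqrt{\ell_n})$; for larger $L<\tfrac14 n$ one either uses that $\nu_n^{L-1}$ is then so small (for the relevant $\nu_n\le 1$ regime) that the inequality is non‑binding, or — more cleanly — replaces the blunt factorization by a sequential exploration of the path: revealing the communities one at a time, after $i$ steps at most $2i$ half‑edges have been consumed, so the conditional expected weight added at step $i{+}1$ is at most $\big(\sum_H d_H(d_H-1)\big)/(\ell_n-2i)$ times the current weight, and telescoping reproduces the same product with the same constant. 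Once the combinatorial set‑up and the matching‑probability identity are in place the remaining arithmetic is routine, so I expect the only real obstacle to be formulating this depletion bound so that a single constant $K$ works uniformly over all admissible $L$.
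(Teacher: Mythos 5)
Your combinatorial set-up is exactly the paper's: $d_{H_0}\prod_{j=1}^{L-1}d_{H_j}(d_{H_j}-1)\cdot d_{H_L}s_{H_L}$ realizations of a fixed community tuple, and matching probability $\prod_{j=1}^{L}(\ell_n-2j+1)^{-1}$. The gap sits precisely at the point you flag as delicate, and neither of your two proposed resolutions closes it. Once you drop the distinctness constraint, you are left with the depletion correction $\prod_{j=1}^{L}\ell_n/(\ell_n-2j+1)=\exp\left(\Theta(L^2/\ell_n)\right)$, which for $L$ of order $n$ (and the lemma is invoked in Lemma~\ref{lem:Evks} for all $L$ up to $\tfrac14 n$, summed over $L$) is exponentially large in $n$. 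Your first fix fails quantitatively: where the lemma is used one only has $1-\nu_n$ of order $n^{-1/3}$ (cf.~\eqref{eq:nubar}), so $\nu_n^{L-1}\approx\exp(-cLn^{-1/3})$ is overwhelmed by $\exp(cL^2/n)$ as soon as $L\gg n^{2/3}$; moreover the statement must hold for every $\lambda\in\mathbb{R}$, so you cannot assume $\nu_n\leq 1$. Your second fix, as you describe it, ``reproduces the same product with the same constant'' --- that is, the same unbounded correction: a sequential exploration only helps if at each step the numerator is the sum of $d_H(d_H-1)$ over the communities \emph{not yet} on the path, which is exactly the distinctness you discarded.

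That subtraction is the paper's mechanism. Keeping the sum over distinct indices, the bound of Janson (Lemma 5.1 of the cited reference) gives $\sum^{*}_{i_1,\dots,i_{L-1}}\prod_{j=1}^{L-1}d_{i_j}(d_{i_j}-1)\leq\left(n\Ex{D_n(D_n-1)}\right)^{L-1}\prod_{j=0}^{L-2}\left(1-\tfrac{j}{r}\right)$, where $r$ is the number of communities of inter-community degree at least two. Since $r\leq\tfrac12\ell_n$, each factor $1-j/r$ is at most $1-2j/\ell_n$, and the ratio of $\prod_{j=0}^{L-2}(1-2j/\ell_n)$ to the denominator's $\prod_{j=1}^{L-1}(1-2j/\ell_n)$ telescopes to the single factor $\left(1-\tfrac{2L-2}{\ell_n}\right)^{-1}\leq\left(1-\tfrac{1}{2\Ex{D_n}}\right)^{-1}$ for $L<\tfrac14 n$; the last, weighted, index is handled separately by $\sum_{i_L}d_{i_L}s_{i_L}\leq\frac{\Ex{D_nS_n}}{\Ex{D_n}}(\ell_n-2L-1)$ over unused $i_L$, which cancels the last denominator factor exactly. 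In short, the distinctness constraint you treated as a harmless over-count is precisely what produces a constant $K$ uniform over all admissible $L$; without restoring it (or an equivalent step-by-step exclusion of visited communities), the lemma as stated is not reached.
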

 	
 	\begin{proof}
 		This proof is very similar to the proof of~\cite[Lemma 5.1]{janson2009}.
 		If $L=1$, then the equation states that $	\Ex{PW_{1}^{\sss{(H_0)}}}\leq K\frac{\Ex{D_nS_n}}{\Ex{D_n}}d_{H_0}$, which is true, since there are at most $d_{H_0}$ paths from $H_0$, and the expected weight of the community at the end of the path is $\Ex{D_nS_n}/\Ex{D_n}$.
 		
 		For $L\geq 2$, the path consists of communities $H_0,H_1,\dots, H_{L}$. This path consists of two half-edges at communities $H_1,\dots H_{L-1}$, and one half-edge at the start and at the end of the path. The probability that these half-edges are paired is $(\ell_n-1)^{-1}(\ell_n-3)^{-1}\dots (\ell_n-2L+1)^{-1}$. Therefore,
 		\begin{equation}
 		\Ex{	PW_{L}^{\sss{(H_0)}}} = \frac{d_{H_0}\sideset{}{^*}\sum_{i_1,\dots , i_L}\prod_{j=1}^{L-1}d_{i_j}(d_{i_j}-1)d_{i_L}s_{i_L}}{\prod_{j=1}^{L}(\ell_n-2j+1)},
 		\end{equation}
 		where $\sum\nolimits^*$ denotes the sum over distinct indices, since all communities in the path must be distinct. If we only sum over $i_L\neq \{H_0,i_1,\dots i_{L-1} \}$, we obtain
 		\begin{equation}
 		\begin{aligned}[b]
	 		\sum_{i_L\neq \{H_0,i_1,\dots i_{L-1} \}}d_{i_L}s_{i_L}&=\sum_{i\in[n]}d_is_i-d_{H_0}s_{H_0}-\sum_{j=1}^{L-1}d_{i_j}s_{i_j}\leq n\Ex{D_nS_n}-2(L-1)-1\\
	 		&\leq \ell_n\frac{\Ex{D_nS_n}}{\Ex{D_n}}-2L-1
											 		\leq \frac{\Ex{D_nS_n}}{\Ex{D_n}}\left(\ell_n-2L-1\right),
	 		\end{aligned}
 		\end{equation}
 		where we used that $d_{i_j}\geq 2$ for $j=1,\dots L-1$ and that $s_i\geq 1$ for all $i$.
 		Therefore,
 		\begin{equation}
 		\begin{aligned}[b]
 		\Ex{	PW_{L}^{\sss{(H_0)}}}
 		&\leq \frac{\Ex{D_nS_n}}{\Ex{D_n}}\frac{d_{H_0}\sideset{}{^*}\sum_{i_1,\dots, i_{L-1}}\prod_{j=1}^{L-1}d_{i_j}(d_{i_j}-1)}{\prod_{j=1}^{L-1}(\ell_n-2j+1)}\\
 		&\leq\frac{\Ex{D_nS_n}}{\Ex{D_n}} (n\Ex{D_n})^{-L+1}\frac{d_{H_0}\sideset{}{^*}\sum_{i_1,\dots, i_{L-1}}\prod_{j=1}^{L-1}d_{i_j}(d_{i_j}-1)}{\prod_{j=1}^{L-1}(1-2j/\ell_n)}.
 		\end{aligned}
 		\end{equation}
 		By arguments of~\cite[Lemma 5.1]{janson2009}
 		\begin{equation}
	 		\sideset{}{^*}\sum_{i_1,\dots, i_{L-1}}\prod_{j=1}^{L-1}d_{i_j}(d_{i_j}-1)\leq \left(n\Ex{D_n(D_n-1)}\right)^{L-1}\prod_{j=0}^{L-2}\left(1-\frac{j}{r}\right),
 		\end{equation}
 		where $r$ denotes the number of communities with inter-community degree larger than or equal to 2. Since $r\leq \frac12\ell_n$,
 		\begin{equation}
 		\begin{aligned}[b]
	 			\Ex{	PW_{L}^{\sss{(H_0)}}} &\leq \frac{\Ex{D_nS_n}}{\Ex{D_n}}\left(\frac{\Ex{D_n(D_n-1)}}{\Ex{D_n}}\right)^{L-1}\frac{d_{H_0}\prod_{j=0}^{L-2}\left(1-\frac{j}{r}\right)}{\prod_{j=1}^{L-1}(1-\frac{2j}{\ell_n})}\\
	 		&	\leq \frac{\Ex{D_nS_n}}{\Ex{D_n}} \nu_n^{L-1}d_{H_0}\frac{\prod_{j=0}^{L-2}\left(1-\frac{2j}{\ell_n}\right)}{\prod_{j=1}^{L-1}(1-\frac{2j}{\ell_n})}\\
	 		&	\leq\frac{\Ex{D_nS_n}}{\Ex{D_n}} \nu_n^{L-1}d_{H_0}\left(1-\frac{2L-2}{n\Ex{D_n}}\right)^{-1}\\
	 		&	\leq \frac{\Ex{D_nS_n}}{\Ex{D_n}} \nu_n^{L-1}d_{H_0}\left(1-\frac{1}{2\Ex{D_n}}\right)^{-1},
	 			\end{aligned}
 		\end{equation}
 		where we have used that $L<\tfrac14 n$. This proves the claim, since $\Ex{D_n}>1$.
 	\end{proof}
 	Using Lemma~\ref{lem:path}, we can bound the expected component size in a HCM. We are interested in the expected component size of a randomly chosen community of size $s$ and inter-community degree $k$, $v(\mathscr{C}_{(k,s)})$.
 	
 	\begin{lemma}\label{lem:Evks}
 		For some $C>0$,
 		\begin{equation}
 		\Ex{v(\mathscr{C}_{(k,s)})}\leq s+ C\frac{k}{1-\nu_n}+o(1).
 		\end{equation}
 	\end{lemma}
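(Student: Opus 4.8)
\textbf{Proof proposal for Lemma~\ref{lem:Evks}.}

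The plan is to decompose the expected component size of a community chosen with size $s$ and inter-community degree $k$ into two contributions: the $s$ vertices of the root community itself, plus the vertices of all other communities reachable from it. For the second contribution, the key idea is to sum the expected weighted number of macroscopic paths of every length $L\geq 1$ emanating from the root community $H_0$, since every community in the same component is connected to $H_0$ by at least one macroscopic path, and bounding the component size by the total path weight over-counts but gives a valid upper bound. Concretely, I would write
\begin{equation}
\Ex{v(\mathscr{C}_{(k,s)})}\leq s+\sum_{L=1}^{n-1}\Ex{PW_{L}^{\sss{(H_0)}}},
\end{equation}
where the root has $d_{H_0}=k$, and the first term $s$ accounts for the root community not being counted by any path of positive length.

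Next I would invoke Lemma~\ref{lem:path} to bound each term $\Ex{PW_{L}^{\sss{(H_0)}}}\leq K\frac{\Ex{D_nS_n}}{\Ex{D_n}}k\,\nu_n^{L-1}$ for $L<\tfrac14 n$, and handle the (negligible) remaining range $\tfrac14 n\leq L\leq n-1$ separately — e.g.\ by a crude bound showing the contribution of very long self-avoiding macroscopic paths is $o(1)$, using that such paths must visit a positive fraction of all communities and $\nu_n\to 1$ keeps the relevant product small, or simply noting there are no self-avoiding macroscopic paths once $L$ exceeds the number of communities that carry inter-community edges. Summing the geometric series over $1\leq L<\tfrac14 n$ gives
\begin{equation}
\sum_{L=1}^{\lfloor n/4\rfloor}\Ex{PW_{L}^{\sss{(H_0)}}}\leq K\frac{\Ex{D_nS_n}}{\Ex{D_n}}k\sum_{L\geq 1}\nu_n^{L-1}=K\frac{\Ex{D_nS_n}}{\Ex{D_n}}\frac{k}{1-\nu_n},
\end{equation}
which is exactly of the claimed form with $C=K\Ex{D_nS_n}/\Ex{D_n}$ (bounded by Condition~\ref{cond:graph}\ref{cond:EDS} and \ref{cond:ED}), once the tail contribution is absorbed into the $o(1)$ term. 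Note that here $\nu_n<1$ because we are in the context of counting expected component sizes where $\nu_n^{\sss{(n)}}=1+\lambda n^{-1/3}+o(n^{-1/3})$ — so strictly one should be careful: if $\lambda>0$ then $\nu_n>1$ and the geometric series diverges, which means the lemma as stated is really intended to be applied in a regime (or with a sign convention) where $\nu_n<1$; in the proof of Theorem~\ref{thm:ltd} this is presumably arranged by working slightly subcritically, so I would phrase the path-counting bound so that $1/(1-\nu_n)$ is interpreted as $1/|1-\nu_n|$ or restrict to $\nu_n<1$ as the relevant case.

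The main obstacle I anticipate is the handling of the over-counting and the long-path tail rigorously. The bound ``component size $\leq$ total weighted path count'' is very lossy — a vertex in a community reachable by many paths is counted many times — but that is fine for an upper bound; the subtlety is instead that Lemma~\ref{lem:path} only controls lengths $L<\tfrac14 n$, so one must argue that longer macroscopic paths contribute only $o(1)$ in expectation. I would argue this by observing that a macroscopic self-avoiding path of length $L$ uses up $L$ distinct communities each contributing a factor roughly $d_{i_j}(d_{i_j}-1)/\ell_n$, and that the number of communities with inter-community degree $\geq 2$ is at most $\tfrac12\ell_n$, so the expected number of such long paths decays fast enough (super-polynomially once $L\gtrsim n$, and the intermediate range $\tfrac14 n\le L\le n$ can be bounded by the same product estimate extended a bit, giving an exponentially small contribution times a polynomial prefactor). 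Making this tail estimate clean while keeping the leading constant $C$ exactly as stated is the delicate bookkeeping step; everything else is a direct application of Lemma~\ref{lem:path} and summation of a geometric series.
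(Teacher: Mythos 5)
Your main decomposition --- root community contributes $s$, everything else is bounded by $\sum_L \Ex{PW_L^{\sss{(H_0)}}}$ with $d_{H_0}=k$, then Lemma~\ref{lem:path} and a geometric series --- is exactly the paper's argument, and your side remark about the sign of $1-\nu_n$ is well taken (the lemma is indeed applied to the pruned graph $G^{\sss{\geq T}}$, whose criticality parameter satisfies $\bar{\nu}_n\leq \nu_n-CTn^{-1/3}+o_{\sss{\Prob}}(n^{-1/3})<1$ for $T$ large). The genuine gap is in your treatment of the tail $L\geq \tfrac14 n$. You propose to bound the expected weighted number of \emph{long} macroscopic paths directly by ``extending the product estimate a bit,'' but this is not a routine extension: in the proof of Lemma~\ref{lem:path} the upper bound for $\Ex{PW_L^{\sss{(H_0)}}}$ carries the competing factors $\prod_{j=1}^{L-1}(1-2j/\ell_n)^{-1}$ (which is $\exp\bigl(L^2/\ell_n+O(L^3/\ell_n^2)\bigr)$, exponentially large in $n$ for $L$ of order $n$) and $\prod_{j=0}^{L-2}(1-j/r)$ with $r\leq\tfrac12\ell_n$ (which is $\exp\bigl(-L^2/(2r)+\dots\bigr)$). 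These two exponents are of the same order and do not obviously net out to decay, so it is not clear that the contribution of $L\in[\tfrac14 n,\,\tfrac12\ell_n]$ is $o(1)$; the restriction $L<\tfrac14 n$ in Lemma~\ref{lem:path} is precisely what keeps the first product bounded by a constant. As written, your tail estimate is an unproved claim on which the lemma rests.

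The paper avoids this entirely by splitting the expectation on the event $\{v^\hie(\mathscr{C}_{\max})\leq\tfrac14 n\}$ rather than on the path length. On that event the sum over $L$ truncates automatically at $\tfrac14 n$, because a self-avoiding macroscopic path cannot be longer than the number of communities in the largest component, so only the regime covered by Lemma~\ref{lem:path} ever arises. On the complementary event one uses the trivial bound $v(\mathscr{C}_{(k,s)})\leq N=n\Ex{S_n}$ together with
\begin{equation}
\Prob\Bigl(v^\hie(\mathscr{C}_{\max})>\tfrac14 n\Bigr)\leq \frac{16\Ex{D_n}}{n(1-\nu_n)}+o(n^{-1}),
\end{equation}
which is the argument of~\cite[Lemma 14]{dhara2016} applied to the macroscopic CM; the product is then $O\bigl(k/(1-\nu_n)\bigr)+o(1)$ and is absorbed into the constant $C$. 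I would recommend replacing your long-path tail estimate by this conditioning step: it uses only results already available for the underlying CM and keeps the bookkeeping for the constant $C$ trivial.
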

 	\begin{proof}
 		We split the expectation into two different parts,
 		\begin{equation}\label{eq:Evkssplit}
 		\Ex{v(\mathscr{C}_{(k,s)})}=\Ex{v(\mathscr{C}_{(k,s)})\ind{v^\hie(\mathscr{C}_{\max})\leq \frac14 n}}
 		+\Ex{v(\mathscr{C}_{(k,s)})\ind{v^\hie(\mathscr{C}_{\max})> \frac14 n}}.
 		\end{equation}
 		We bound the first part similar to the argument in~\cite[Lemma 4.6]{janson2009a}. For every community $H'$  in the same component as community $H_0$, there is at least one path between $H_0$ and $H'$. Furthermore, $H'$ adds $s_{H'}$ vertices to the component. Therefore,
 		\begin{equation}
 		v(\mathscr{C}(H_0))\leq \sum_L PW_L^{\sss{(H_0)}}
 		\end{equation}
 		This yields
 		\begin{equation}
	 		\Ex{v(\mathscr{C}_{(k,s)})}\leq \sum_L \Ex{PW_L^{\sss{H_{(k,s)}}}},
 		\end{equation}
 		where $H_{(k,s)}$ is a community of size $s$ and inter-community degree $k$.
 		The sum of the first term in~\eqref{eq:Evkssplit} only goes up to $L=\tfrac{1}{4}n$, since the maximal path size is smaller than the maximal component size. Thus, by Lemma~\ref{lem:path},
 		\begin{equation}
 		\begin{aligned}[b]
 		\Ex{v(\mathscr{C}_{(k,s)})\ind{v^\hie(\mathscr{C}_{\max})\leq \frac14 n}} &\leq s+\sum_{L=1}^{\frac14 n} \Ex{PW_L^{\sss{H_(k,s)}}}
 		\leq s+\frac{\Ex{D_nS_n}}{\Ex{D_n}}Kk\sum_{L=1}^{\infty}\nu_n^{L-1}\\
 		&=s+\frac{\Ex{D_nS_n}}{\Ex{D_n}}\frac{kK}{1-\nu_n}.
 		\end{aligned}
 		\end{equation}
 		
 		For the second term, we use that the maximal component size is bounded from above by the total number of vertices $N=\Ex{S_n}n$. Then we need to bound the probability that the maximal hierarchical component is at least $\tfrac{1}{4}n$. This is the probability that the size of the largest component in a regular configuration model is larger than $\tfrac{1}{4}n$.
 		We can use the same arguments as in~\cite[Lemma 14]{dhara2016} to show that
 		\begin{equation}\label{eq:Ckssmall}
 		\Prob\left(v^\hie(\mathscr{C}_{\max})>\frac 14 n\right)\leq \frac{16\Ex{D_n}}{n(1-\nu_n)}+o(n^{-1}).
 		\end{equation}
 		This gives
 		\begin{equation}\label{eq:Ckslarge}
	 		\Ex{v(\mathscr{C}_{(k,s)})\ind{v^\hie(\mathscr{C}_{\max})> \frac14 n}}\leq  \frac{16N\Ex{D_n}}{n(1-\nu_n)}+o(1)\leq \frac{16k\Ex{S_n}\Ex{D_n}}{(1-\nu_n)}+o(1),
 		\end{equation}
 		for $k>0$.
		Combining~\eqref{eq:Ckssmall} and\eqref{eq:Ckslarge} then yields
		\begin{equation}
		\Ex{v(\mathscr{C}_{(k,s)})}\leq s+ C\frac{k}{1-\nu_n}+o(1),
		\end{equation}
		for some $C>0$.
 	\end{proof}

  \begin{proof}[Proof of Theorem~\ref{thm:ltd}]
%
%
%
  	
  We first show that $\Ex{S_n^2}=o(n^{1/3})$ is a necessary condition for convergence in the $\ltd$-topology.
  	  Thus, we assume that $\Ex{S_n}\geq \varepsilon n^{1/3}$ for some $\varepsilon>0$. Let $i_T$ denote the index of the first component that is explored after time $Tn^{2/3}$. Then, for any $\delta>0$,
  	    	 \begin{equation}
  	    	 \Prob(\sum_{i\geq i_T} v(\mathscr{C}_{(i)}^{\sss{\geq T}})^2 \geq \delta n^{4/3})
  	    	 \end{equation}
  	    	 needs to be small  for convergence in the $\ltd$ topology. Because $x^2$ is a convex function,
  	  \begin{equation}
  	  \sum_{i\geq i_T} v(\mathscr{C}_{(i)})^2  \geq \sum_{j\geq Tn^{2/3}}s_{(j)}^2=\sum_{s}n^{\sss{\geq T}}_{s}s^2 = \sum_s n_s s^2-\sum_s n_s^{\sss{\leq T}}s^2,
  	  \end{equation}
  	  where $n^{\sss{\geq T}}_s$ and $n_s^{\sss{\leq T}}s^2$ denote the number of communities of size $s$, discovered after or before time $Tn^{2/3}$ respectively.
  	  
  	  We can use a martingale argument similar to~\cite[Proposition 29]{dhara2016}, to show that 
  	  \begin{equation}
  	    \sup_{u\leq t}\abs{n^{-2/3}\sum_{i=1}^{\lfloor u n^{2/3}\rfloor}s_{(i)}^2-\frac{\sum_{k,s} ks^2 n_{k,s}}{\ell_n}u} = o_\Prob(n^{2/3}).
  	  \end{equation}
  	  Therefore
  	  \begin{equation}
  	  \sum_s n_s^{\sss{\leq T}}s^2 =Tn^{2/3} \frac{\sum_{k,s} ks^2 n_{k,s}}{\ell_n}+o(n^{4/3})= T\frac{o(n)}{\ell_n} \sum_s s^2 n_{s}+o_\Prob(n^{4/3}),
  	  \end{equation}
  	  where we have used that $d_{\max}=o(n^{1/3})$. 
  	  Therefore, we obtain
  	  \begin{equation}
  	  \begin{aligned}[b]
  	  \sum_s s^2n_s^{\sss{\geq T}}
  	  &=
  	  \sum_s s^2n_s\left(1-To(1)\right)+o_\Prob(n^{4/3})\geq \varepsilon n^{4/3}(1-To(1)+o_\Prob(1)).
  	  \end{aligned}
  	  \end{equation}
  	  Taking the limit first for $n\to\infty$, and then for $T\to\infty$ shows that
  	  \begin{equation}
  	  \lim_{T\to\infty}\lim_{n\to\infty}\Prob\left(\sum_{i\geq i_T}v(\mathscr{C}_{(i)})^2>\delta n^{4/3}\right)\neq 0
  	  \end{equation}
  	  for $\delta<\varepsilon$, hence the component sizes do not converge in the $\ltd$-topology.
  	
  	 Now we show that $\Ex{S_n^2}=o(n^{1/3})$ is sufficient for convergence in the $\ltd$-topology.
  	  Let $G^{\sss{\geq T}}$ denote the graph that is obtained by removing all components that have been explored before time $Tn^{2/3}$.
  	 To show that $\Ex{S_n^2}=o(n^{1/3})$ is sufficient for convergence in the $\ltd$ topology, we calculate
  	 \begin{equation}\label{eq:ltdcond}
  	 \begin{aligned}[b]
  	 \Prob\left(\sum_{i\geq i_T}v(\mathscr{C}_{(i)})^2>\delta n^{4/3}\right)&\leq \frac{1}{\delta n^{4/3}}\Ex{\sum_{i\geq i_T}v(\mathscr{C}_{(i)})^2}=\frac{1}{\delta n^{1/3}}\Ex{v(\mathscr{C}^{\sss{\geq T}}(V_n))}\\
  	 &=\frac{1}{\delta n^{1/3}}\Ex{S_{H_n}^{\sss{\geq T}}v(\mathscr{C}^{\sss{\geq T}}(H_n))},
  	 \end{aligned}
  	 \end{equation}
  	 where $V_n$ denotes a randomly chosen vertex of $G^{\sss{\geq T}}$, and $H_n$ denotes a randomly chosen community. Furthermore,  	
  	 \begin{equation}\label{eq:ltd2}
  	 \begin{aligned}[b]
  	 \Ex{S_{H_n}^{\sss{\geq T}}v(\mathscr{C}^{\sss{\geq T}}(H_n))}=\sum_{k,s}p_{k,s}^{\sss{\geq T}}(n)s\Ex{v(\mathscr{C}^{\sss{\geq T}}_{k,s})},
  	 \end{aligned}
  	 \end{equation}
  	 where $v(\mathscr{C}^{\sss{\geq T}}_{k,s})$ denotes the size of a component where the first explored community has size $s$ and inter-community degree $k$.
  	 By~\cite{dhara2016}, the criticality parameter of $G^{\sss{\geq T}}$, $\bar{\nu}_n$, satisfies
  	 \begin{equation}
  	 \label{eq:nubar}
  	 \bar{\nu}_n\leq \nu_n-CTn^{-1/3}+o_\Prob(n^{-1/3}).
  	 \end{equation}
  	  Then, combining Lemma~\ref{lem:Evks} and~\eqref{eq:ltd2} gives
  	  \begin{equation}\label{eq:ESHC}
  	  \begin{aligned}[b]
  	  \Ex{S_{H_n}v(\mathscr{C}^{\sss{\geq T}}(H_n))}=\Ex{(S_n^{\sss{\geq T}})^2}+K\Ex{(D_nS_n)^{\sss{\geq T}}}\frac{n^{1/3}}{CT-\lambda}.
  	  \end{aligned}
  	  \end{equation}
  	 Furthermore, $\Ex{(S_n^{\sss{\geq T}})^2} \leq \Ex{S_n^2}/(n-Tn^{2/3})= \Ex{S_n^2}(1+O(n^{-1/3}))$.
  	  By assumption, $\Ex{S_n^2}=o(n^{1/3})$. Combining this with~\eqref{eq:ltdcond} and~\eqref{eq:ESHC} yields
  	  \begin{equation}
  	  \begin{aligned}[b]
  	  \Prob\left(\sum_{i\geq i_T}v(\mathscr{C}_{(i)})^2>\delta n^{4/3}\right)&\leq o_\Prob(1)+\frac{K}{\delta (CT-\lambda)},
  	  \end{aligned}
  	  \end{equation}
  	  so that
  	  \begin{equation}
  	  \lim_{T\to\infty}\lim_{n\to\infty}\Prob\left(\sum_{i\geq i_T}v(\mathscr{C}_{(i)})^2>\delta n^{4/3}\right)=0.
  	  \end{equation}
  	  Thus, if $\Ex{S_n^2}=o(n^{1/3})$, then Theorem~\ref{thm:crit} also holds in the $\ltd$ topology.
  \end{proof}

  \section{Percolation on the HCM}\label{sec:perc}
In this section we prove Theorem~\ref{thm:prc}, which identifies the scaling limit for the cluster sizes of a HCM under critical percolation.
As described in Section~\ref{sec:percresults}, it is convenient to percolate first only the edges inside communities. This percolation results in a HCM with percolated communities. These percolated communities may be disconnected. However, if we define the connected components of the percolated communities as new communities, we retrieve an updated HCM. After this, we percolate the inter-community connections.
These edges are distributed as in the CM. Therefore, for this second step of percolation, we follow a similar approach as in~\cite{janson2009b}. Combining these two steps of percolation results in the following algorithm that constructs a percolated HCM:
\begin{algorithm}\label{alg:perc}
	\begin{enumerate}
		\item[{(S1)}]
		For each community $H$, remove every edge in $H$ independently with probability $1-\pi$. Let $\bar{n}$ denote the number of connected components of communities after percolation inside the communities. Then, define the connected components of the percolated communities as new communities $(H^\prc_i)_{i\in[\bar{n}]}$.
		\item [{(S2)}]
		Let $H_\mathrm{e}^\prc$ be the percolated community attached to inter-community half-edge $e$. Then, every inter-community half-edge $e$ \textit{explodes} with probability $1-\sqrt{\pi}$, it detaches from $H_\mathrm{e}^\prc$, and is associated to a new community $H'^\prc_e$ of the same shape, but with $e$ as its only inter-community half-edge. Let $n_{H+}$ denote the number of new communities of shape $H$ that are created in this way, and $\tilde{n}=\bar{n}+\sum_H n_{H+}$. Let $(\tilde{H}_i^\prc)_{i\in [\tilde{n}]}$ be the new communities after detaching the half-edges.
		\item [{(S3)}]
		Construct a hierarchical configuration model with community sequence $(\tilde{H}_i^\prc)_{i\in [\tilde{n}]}$.
		\item [{(S4)}]
		For all community shapes $H$, delete the exploded communities with inter-community degree one.
	\end{enumerate}
\end{algorithm}
Figure~\ref{fig:prcalg} illustrates Algorithm~\ref{alg:perc}. By~\cite{janson2009a}, a similar algorithm creates a percolated CM. Therefore, by adding the extra step of percolation inside communities, Algorithm~\ref{alg:perc} creates a percolated HCM.
In this bond percolation procedure, there are three sources of randomness: the percolated communities $H^\prc$ are random, the explosion procedure is random, and then pairing the edges to construct a HCM is random as well.

\begin{figure}[htb]
	\begin{subfigure}[t]{0.31\textwidth}
		\centering
		\includegraphics[width=0.86\linewidth]{comexstubs.pdf}
		\caption{A set of communities} \label{fig:comstubs}
	\end{subfigure}
	\hspace*{\fill} 
	\begin{subfigure}[t]{0.31\textwidth}
		\centering
		\includegraphics[width=0.86\linewidth]{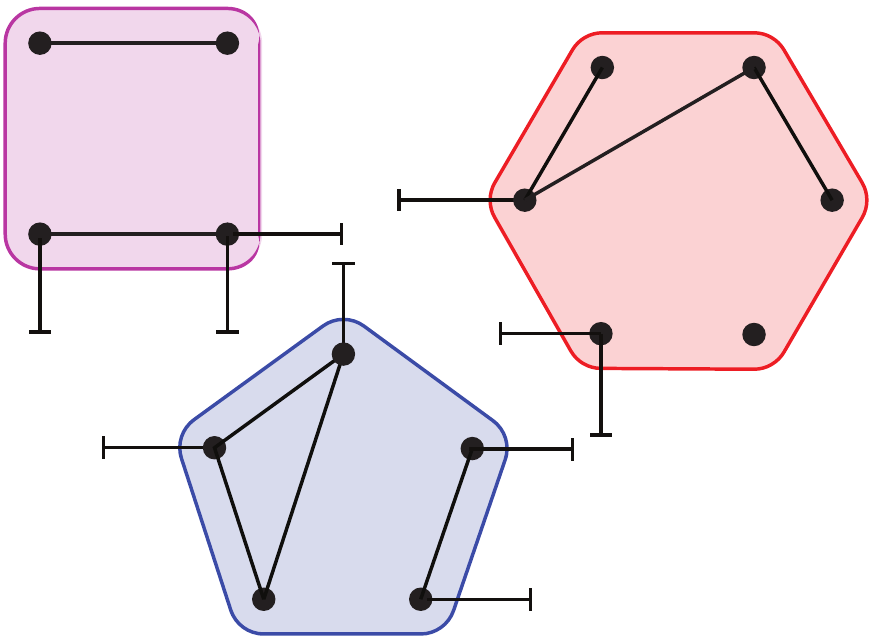}
		\caption{Step (S1): percolation inside communities} \label{fig:percin}
	\end{subfigure}
	\hspace*{\fill} 
	\begin{subfigure}[t]{0.31\textwidth}
		\centering
		\includegraphics[width=0.86\linewidth]{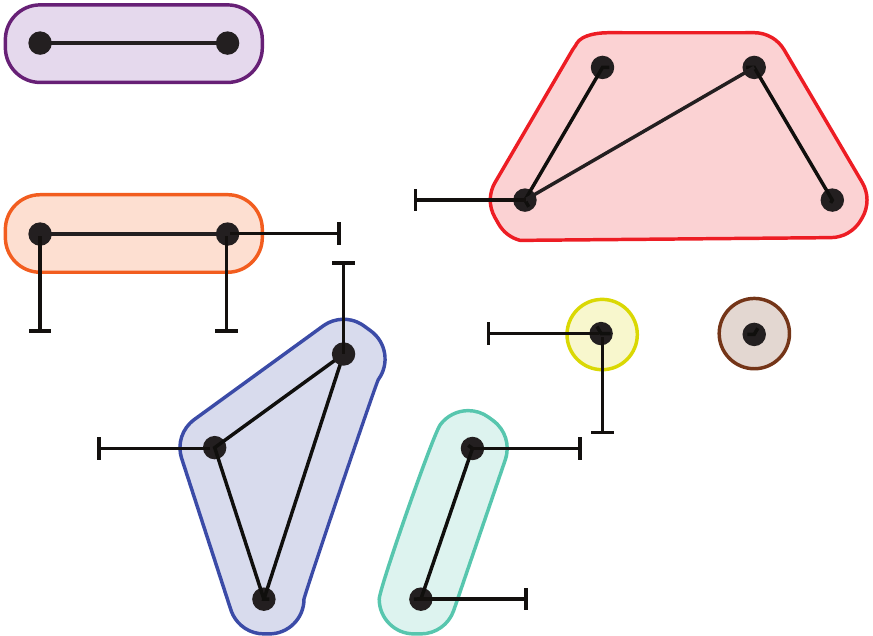}
		\caption{New communities $H^\prc$} \label{fig:percinnew}
	\end{subfigure}\\ [0.8cm]
		\begin{subfigure}[t]{0.31\textwidth}
			\centering
			\includegraphics[width=0.86\linewidth]{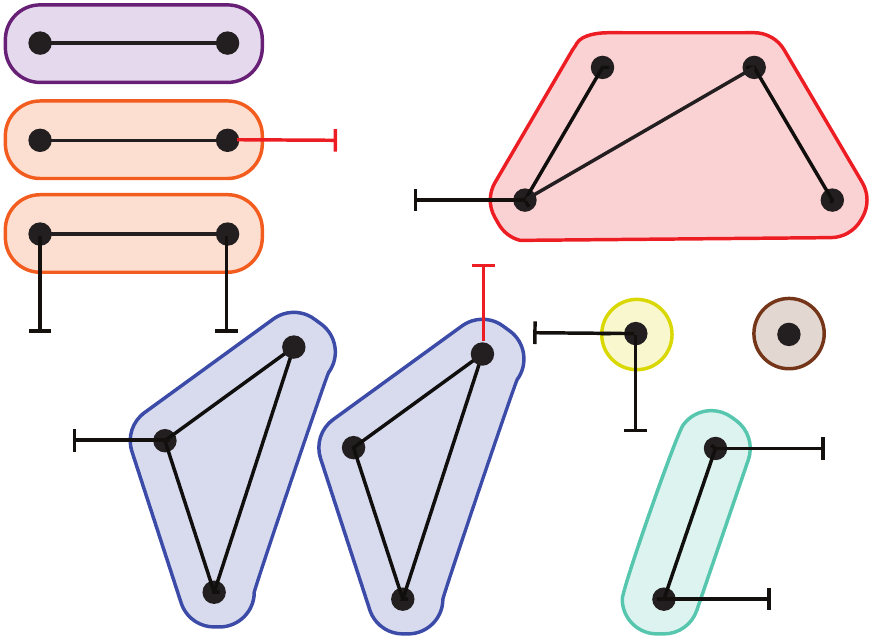}
			\caption{Step {{(S2)}}: exploding half-edges (red) results in communities $\tilde{H}^\prc$} \label{fig:expl}
		\end{subfigure}
		\hspace*{\fill} 
		\begin{subfigure}[t]{0.31\textwidth}
			\centering
			\includegraphics[width=0.86\linewidth]{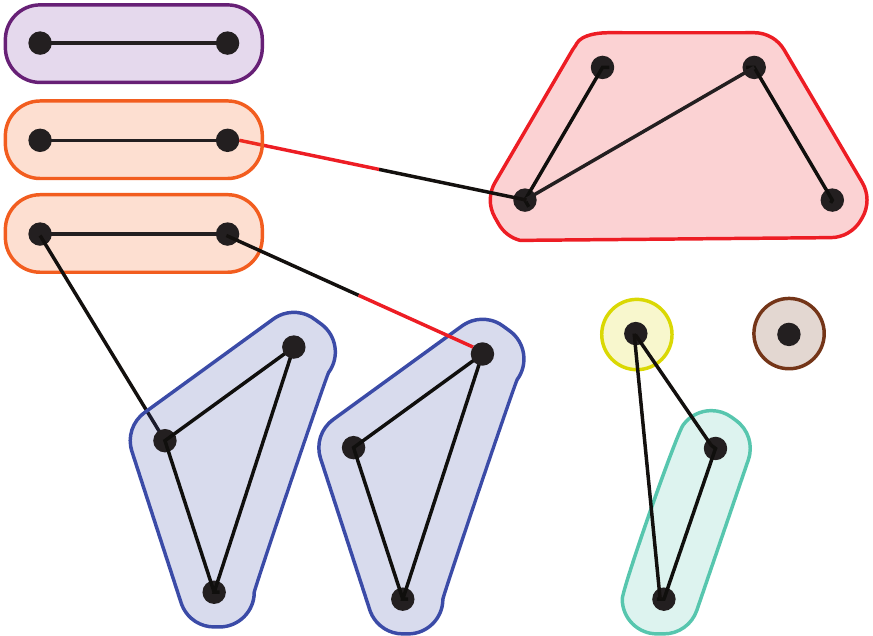}
			\caption{Step {{(S3)}}: connect as in the CM} \label{fig:perccon}
		\end{subfigure}
		\hspace*{\fill} 
		\begin{subfigure}[t]{0.31\textwidth}
			\centering
			\includegraphics[width=0.86\linewidth]{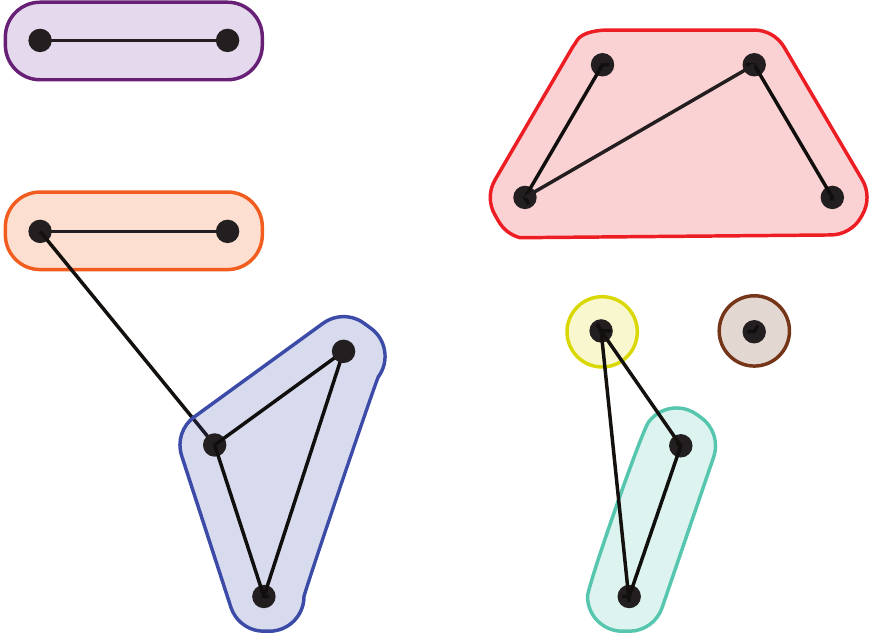}
			\caption{Step {{(S4)}}: delete exploded communities} \label{fig:percout}
		\end{subfigure}
	\caption{Illustration of Algorithm~\ref{alg:perc}. In this example $n=3,\bar{n}=7, \tilde{n}=9$.}
	\label{fig:prcalg}
\end{figure}

\begin{remark}
	In percolation on the regular configuration model,~\cite{janson2009b} showed that instead of deleting the exploded vertices $n_+$, it is also possible to choose $n_+$ vertices uniformly at random from all vertices with degree one, and to delete them. This procedure also results in a multigraph with the same distribution as a percolated configuration model. Similar to this, in our setting it is possible to replace step (S4) of Algorithm~\ref{alg:perc} by
	\begin{enumerate}
		\item [{(S4')}] For all community shapes $H$, choose $n_{H+}$ communities uniformly at random from all communities of shape $H$ and inter-community degree one. Delete these communities.
	\end{enumerate}
\end{remark}

\begin{remark}
	In the CM, each percolated half-edge is replaced by a single half-edge attached to a new vertex. In Algorithm~\ref{alg:perc}, each percolated inter-community half-edge is attached to a new community of the same shape as the original community, but with only one half-edge adjacent to it (see Figure~\ref{fig:expl}). This difference is caused by the fact that in the HCM, communities of the same inter-community degree do not have to be equal. Different communities with inter-community degree $k$ may have different sizes. Therefore, the effect on the component sizes of percolating a half-edge of a community of inter-community degree $k$ is not the same if the community sizes are not the same. Percolating the half-edge adjacent to a larger community has more effect on the component sizes than percolating the half-edges adjacent to a smaller community. For this reason, we replace exploded half-edges by half-edges attached to a community of the same size as the original community, instead of replacing it by a vertex of degree one.
\end{remark}

\subsection{The sizes of critical percolation clusters.}
We now analyze Algorithm~\ref{alg:perc} to prove Theorem~\ref{thm:prc}.
Let $S_n^\prcn$ and $D_n^\prcn$ denote the size and degree of communities after percolation only inside the communities with probability $\pi_n$, and $S^\prc$ and $D^\prc$ their infinite size limits~\cite{hofstad2015}. Furthermore, let $g(H,v,k,\pi_n)$ denote the probability that after percolating community $H$ with parameter $\pi_n$, the connected component containing vertex $v$ contains $k$ half-edges.
By~\cite[(41)]{hofstad2015},
\begin{equation}\label{eq:Dprc}
\Prob(D^\prc=k)=\frac{\sum_H\sum_{v\in V_H}P(H)d_v^{\sss{(b)}}g(H,v,k,\pi)/k}{\sum_H\sum_{v\in V_H}\sum_lP(H)d_v^{\sss{(b)}}g(H,v,l,\pi)/l}.
\end{equation}

We denote the number of communities in the original graph by $n$, the number of communities after percolating only the intra-community edges by $\bar{n}$, and the number of communities after the explosion procedure by $\tilde{n}$. Note that after percolation inside the communities, the number of vertices $N$ remains the same.
Furthermore, similarly to~\cite{dhara2016}, let $\Prob_{\pi_n}^{\bar{n}}$ denote the probability measure containing the shapes of the exploded communities after Algorithm~\ref{alg:perc}, step {(S2)}. Then $\Prob_\pi$ denotes the product measure of $(\Prob_{\pi_n}^{\bar{n}})_{\bar{n}\geq 1}$.
Since $n_{H+}\sim \text{Bin}(d_H\bar{n}_{H},1-\sqrt{\pi_n})$, a.s. with respect to $\Prob_\pi$
\begin{equation}\label{eq:n+s}
n_{H+}=d_H \bar{n}_{H}(1-\sqrt{\pi_n})+o\left(d_H\bar{n}_{H}\right).
\end{equation}
Therefore,
\begin{equation}\label{eq:ntild}
\frac{\tilde{n}}{\bar{n}}=1+\frac{\sum_Hn_{H+}}{\bar{n}}=1+\Ex{D^\prc}(1-\sqrt{\pi})+o(1),
\end{equation}
a.s. with respect to $\Prob_\pi$.

The following lemma proves that the HCM with community sequence $(\tilde{H}_i^\prcn)_{i\in [\tilde{n}]}$ satisfies Conditions~\ref{cond:graph} and~\ref{cond:size}, so that we can apply Theorem~\ref{thm:crit} to find its component sizes:

\begin{lemma}\label{lem:perccond}
	Let $G$ be a hierarchical configuration model satisfying \textup{Condition}~\ref{cond:perc} with community sequence $(H_i)_{i\in[n]}$. Then the hierarchical configuration model with community sequence $(\tilde{H}_i^\prcn)_{i\in [\tilde{n}]}$, constructed as described in \textup{Algorithm}~\ref{alg:perc}, satisfies \textup{Conditions}~\ref{cond:graph} and~\ref{cond:size}.
\end{lemma}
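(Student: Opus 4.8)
The plan is to verify, one by one, the sub-conditions of Conditions~\ref{cond:graph} and~\ref{cond:size} for the community sequence $(\tilde H_i^{(\pi_n)})_{i\in[\tilde n]}$ produced by steps (S1)--(S2) of Algorithm~\ref{alg:perc}, splitting the analysis along the two percolation steps. Step (S1) turns $(H_i)_{i\in[n]}$ into the percolated pieces $(H_i^{(\pi_n)})_{i\in[\bar n]}$, and step (S2) then turns these into $(\tilde H_i^{(\pi_n)})_{i\in[\tilde n]}$ by the binomial explosion. The governing observation is that both operations only \emph{split} a community: for each original community $H$ the sizes of its descendants in step~(S2) sum to at most $s_H$ plus the $\le d_H$ exploded copies (each of size $\le s_H$), their inter-community degrees sum to $d_H$, and one has the crude bounds $\sum_{\mathrm{desc}}\tilde d^{\,3}\le d_H^3+d_H$ and $\sum_{\mathrm{desc}}\tilde d\,\tilde s\le 2 d_H s_H$. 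Throughout I also use that $\pi_n\to\pi$, the solution of $\pi=1/\nu_{D^{(\pi)}}$: this follows from~\eqref{eq:pic} since $\lambda n^{-1/3}\to 0$, together with the continuity of $\pi\mapsto\nu_{D^{(\pi)}}$ (the functions $g(H,v,k,\cdot)$ being polynomials) and the monotonicity noted after Condition~\ref{cond:perc}; in particular $0<\pi<1$ and $\nu_{D^{(\pi_n)}}^{(n)}$ stays bounded.

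For Condition~\ref{cond:graph}\ref{cond:pn}, fix a labeled graph $H'$ and write the number of percolated pieces of shape $H'$ as $\sum_{i=1}^n X_i^{(H')}$ with the $X_i^{(H')}$ independent and $0\le X_i^{(H')}\le s_i$. Hoeffding's inequality gives a deviation probability of order $\exp(-ct^2 n^2/\sum_i s_i^2)$, and $\sum_i s_i^2\le s_{\max}N=O(s_{\max}n)$ with $s_{\max}=o(n)$ makes this vanish; hence $\bar n_{H'}/n$ converges in probability to $\sum_H P(H)\,\mathbb{E}\big[\#\{\text{pieces of shape }H'\text{ in }H\text{ under }\pi\text{-percolation}\}\big]$, using also $P_n(H)\inprob P(H)$ and $\pi_n\to\pi$. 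The explosion then adds $n_{H'+}\sim\mathrm{Bin}(d_{H'}\bar n_{H'},1-\sqrt{\pi_n})$ copies of the degree-one version of $H'$, and by~\eqref{eq:n+s} and~\eqref{eq:ntild} these counts and $\tilde n$ concentrate around explicit deterministic limits; consequently $\tilde P_n(H')=\tilde n_{H'}/\tilde n\inprob\tilde P(H')$ for every $H'$. That $\tilde P$ is a bona fide probability distribution on finite labeled graphs (no mass escapes to infinity) follows from tightness, which follows from $\mathbb{E}[\tilde S^{(\pi)}]<\infty$ established below, since a graph on $s$ vertices has only finitely many shapes.

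The moment conditions are handled by domination plus uniform integrability. Summing the bounds above over $i\in[n]$ and dividing by $\tilde n$ --- with $\tilde n/n$ bounded above and below because $n\le\bar n\le N=O(n)$ and~\eqref{eq:ntild} --- shows $\mathbb{E}[\tilde S_n]$, $\mathbb{E}[\tilde D_n\tilde S_n]$ and $\mathbb{E}[\tilde D_n^{\,3}]$ are uniformly bounded. For the convergence asked in Conditions~\ref{cond:graph}\ref{cond:S},~\ref{cond:EDS} and~\ref{cond:size}\ref{cond:ED}, combine $\tilde D_n\indist\tilde D^{(\pi)}$ and $\tilde S_n\indist\tilde S^{(\pi)}$ (from Condition~\ref{cond:graph}\ref{cond:pn} just proved) with uniform integrability: for instance $\mathbb{E}[\tilde D_n^{\,3}\ind{\tilde D_n>M}]\le(n/\tilde n)\,\mathbb{E}[D_n^{3}\ind{D_n>M}]$, because any descendant with $\tilde d>M$ is a piece of some $H$ with $d_H>M$, and $\{D_n^3\}$ is uniformly integrable since $\mathbb{E}[D_n^3]\to\mathbb{E}[D^3]$; the mixed functional $\tilde D_n\tilde S_n$ is bounded and de-tailed the same way, $\{D_nS_n\}$ being uniformly integrable since $\mathbb{E}[D_nS_n]\to\mathbb{E}[DS]$. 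Condition~\ref{cond:graph}\ref{cond:sd2} is immediate, as $\tilde s_{\max}\le s_{\max}\ll n^{2/3}/\log n\asymp\tilde n^{2/3}/\log\tilde n$. For Condition~\ref{cond:size}\ref{cond:PD}: $\Prob(\tilde D^{(\pi)}=1)>0$ because of the exploded degree-one copies, while $\nu_{D^{(\pi)}}>1$ in~\eqref{eq:condin} forces $\Prob(D^{(\pi)}\ge 2)>0$, hence $\Prob(\tilde D^{(\pi)}\ge 2)>0$, so $\Prob(\tilde D^{(\pi)}=1)<1$ and $\Prob(\tilde D^{(\pi)}=0)<1$.

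It remains to check the critical window, Condition~\ref{cond:size}\ref{cond:nu}. Here step~(S2) is an exact binomial thinning: each inter-community half-edge survives with probability $\sqrt{\pi_n}$, the total half-edge count is preserved, and a piece of inter-community degree $k$ acquires degree $\mathrm{Bin}(k,\sqrt{\pi_n})$. Thus $\sum\tilde d=\ell_{\bar n}$ deterministically, while averaging over the explosion gives $\mathbb{E}\big[\sum\tilde d(\tilde d-1)\big]=\pi_n\,\nu_{D^{(\pi_n)}}^{(n)}\,\ell_{\bar n}$; a finite-third-moment variance bound (each summand $\tilde d^2$ has variance $O(d'^{\,3})$, so $\mathrm{Var}(\sum\tilde d^2)=O(\sum_{\text{pieces}}d'^{\,3})=O(n)$, i.e.\ relative fluctuations $O(n^{-1/2})=o(n^{-1/3})$) upgrades this to $\nu_{\tilde D^{(\pi_n)}}^{(\tilde n)}=\pi_n\,\nu_{D^{(\pi_n)}}^{(n)}+o(n^{-1/3})$, $\Prob_\pi$-almost surely. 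By~\eqref{eq:pic} the right-hand side equals $1+\lambda n^{-1/3}+o(n^{-1/3})$, and since $\tilde n=cn(1+o(1))$ this reads $1+\lambda c^{1/3}\tilde n^{-1/3}+o(\tilde n^{-1/3})$, so Condition~\ref{cond:size}\ref{cond:nu} holds with parameter $\lambda c^{1/3}$. Matching the critical window to $o(n^{-1/3})$ precision across the random explosion, together with the two-stage concentration needed for Condition~\ref{cond:graph}\ref{cond:pn}, is where the real work lies; the remaining steps are the bookkeeping of the domination inequalities, entirely parallel to the size-one (CM) case treated in~\cite{dhara2016}.
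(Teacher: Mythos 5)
Your proof is correct and verifies the same list of sub-conditions under the same (S1)/(S2) decomposition, but the technical machinery differs from the paper's in each of the three main steps. For the moment conditions, the paper works with the explicit size-biased representation~\eqref{eq:Dprc} of the law of $(S^\prc,D^\prc)$, bounds each summand by $P_n(H)d_H^3$ (resp.\ $P_n(H)d_Hs_H$), and invokes the General Lebesgue Dominated Convergence Theorem; you instead prove distributional convergence first and upgrade it via uniform integrability, using the per-community descendant bounds $\sum\tilde d^{\,3}\le d_H^3+d_H$ and $\sum\tilde d\,\tilde s\le 2d_Hs_H$ to transfer the uniform integrability of $\{D_n^3\}$ and $\{D_nS_n\}$ to the percolated sequence. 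For the explosion step, the paper cites~\cite[Lemma 24]{dhara2016} for $\tilde D_n^\prcn$ and computes the empirical sums for $\tilde S_n^\prcn$ and $\tilde D_n^\prcn\tilde S_n^\prcn$ directly; your binomial-thinning identity $\Ex{\sum\tilde d(\tilde d-1)}=\pi_n\nu_{D^\prcn}^{\sss{(n)}}\ell_{\bar n}$ together with the $O(\sum d^3)=O(n)$ variance bound reproves the key display $\nu_{\tilde D_n^\prcn}=\pi_n\nu_{D_n^\prcn}+o(n^{-1/3})$ that the paper imports from~\cite[eq.~(7.2)]{dhara2016}. What your route buys is self-containedness and an explicit treatment of two points the paper passes over: the concentration (via Hoeffding) behind Condition~\ref{cond:graph}\ref{cond:pn} for the percolated shape counts, and the observation that rewriting the window in terms of $\tilde n$ rather than $n$ rescales the location parameter to $\lambda c^{1/3}$ --- harmless for the lemma as stated, since Condition~\ref{cond:size}\ref{cond:nu} only asks for \emph{some} $\lambda$, but worth flagging when the lemma is fed into Theorem~\ref{thm:prc}. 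What the paper's route buys is brevity and closed-form limit laws, which are reused in Section~\ref{sec:exppi}.
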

\begin{proof}
	By~\eqref{eq:Dprc},
	\begin{equation}
	\Ex{(D_n^\prcn)^3}=\frac{\sum_H\sum_{v\in V_H}\sum_k P_n(H)d_v^{\sss{(b)}}g(H,v,k,\pi_n)k^2}{\sum_H\sum_{v\in V_H}\sum_l P_n(H)d_v^{\sss{(b)}}g(H,v,k,\pi_n)/l}.
	\end{equation}
	Let $H^\prcn_v$ denote the connected component of the percolated community $H$ containing vertex $v$. Then,
	\begin{equation}\label{eq:D3bound}
	\begin{aligned}[b]
		\sum_{v\in V_H}\sum_k P_n(H)d_v^{\sss{(b)}}g(H,v,k,\pi_n)k^2&=\sum_{v\in V_H} P_n(H)d_v^{\sss{(b)}}\Ex{(\# \text{ outgoing edges of }H_v^\prcn)^2}\\
		&\leq \sum_{v\in V_H} P_n(H)d_v^{\sss{(b)}}d_H^2=P_n(H)d_H^3.
	\end{aligned}
	\end{equation}
	To show that $\Ex{(D_n^\prcn)^3}$ converges, we use the General Lebesgue Dominated Convergence Theorem (see for example~\cite[Thm. 19]{royden2010}), which states that if $\abs{f_n(x)}\leq g_n(x)$ for all $x\in E$,  $\sum_{x\in E} g_n(x)\to \sum_{x\in E} g(x)<\infty$, and $f_n$ converges pointwise to $f$, then also $\sum_{x\in E}f_n(x)\to\sum_{x\in E}f_n(x)$.
By condition~\ref{cond:size}, $\Ex{D_n^3}\to\Ex{D^3}$, so by the General Lebesgue Dominated Convergence Theorem and~\eqref{eq:D3bound}, $\Ex{(D_n^\prcn)^3}\to \Ex{(D^{\prc})^3}$.
Similarly,
\begin{equation}
\Ex{D_n^\prcn S_n^\prcn}=\frac{\sum_H\sum_{v\in V_H}\sum_k P_n(H)d_v^{\sss{(b)}}g(H,v,k,\pi_n)s_{H^\prc_v}}{\sum_H\sum_{v\in V_H}\sum_l P_n(H)d_v^{\sss{(b)}}g(H,v,k,\pi_n)/l}.
\end{equation}
We can bound the summands in the numerator as
\begin{equation}
\begin{aligned}[b]
	\sum_{v\in V_H}\sum_k P_n(H)d_v^{\sss{(b)}}g(H,v,k,\pi_n)s_{H^\prc_v}&\leq \sum_{v\in V_H}\sum_k P_n(H)d_v^{\sss{(b)}}g(H,v,k,\pi_n)s_{H}\\
	&=P_n(H)d_Hs_H,
\end{aligned}
\end{equation}
so that again by the General Lebesgue Dominated Convergence Theorem and Condition~\ref{cond:graph} $\Ex{S^\prcn_n D^\prcn_n}\to\Ex{S^\prc D^\prc}$.
		By a similar reasoning $\Ex{S^\prcn_n}\to\Ex{S^\prc}$.
		Thus, we have proved that $D_n^\prcn$ and $S_n^\prcn$ satisfy Conditions~\ref{cond:graph} and~\ref{cond:size}\ref{cond:ED}. Hence, after percolating inside the communities, the HCM still satisfies these conditions.
		
		We want to prove that $\tilde{D}_n^\prcn$ and $\tilde{S}_n^\prcn$ also satisfy Conditions~\ref{cond:graph} and~\ref{cond:size}\ref{cond:ED}, so that after the explosion process the conditions are still satisfied.
		Since $D_n^\prcn$ satisfies Condition~\ref{cond:size},~\cite[Lemma 24]{dhara2016} shows that $\tilde{D}_n^\prcn$ also satisfies Condition~\ref{cond:size}.
		
		Now we prove the convergence of the first moment of $\tilde{S}_n^\prcn$.
		After explosion, the first $\bar{n}$ entries of $(\tilde{S}^\prcn_i) _{i\in[\tilde{n}]}$ are the same as in $({S}^\prcn_i) _{i\in[\bar{n}]}$, since the community sizes are not changed when percolating the inter-community edges. Furthermore, there are $n_{H+}$ duplicated communities of shape $H$. Thus, the limiting distribution $(\tilde{S}^\prc,\tilde{D}^\prc)$ can be written as
		\begin{equation}
		\begin{aligned}[b]
		\Prob\left(\tilde{S}^\prc = s,\tilde{D}^\prc = k\right) & = \frac{\Prob\left(S^\prc =s,D^\prc = k\right)}{1+\Ex{D^\prc}(1-\sqrt{\pi})}\\
		&\quad +\ind{k=1}\frac{\sum_j j(1-\sqrt{\pi})\Prob\left(S^\prc =s,D^\prc = j\right)}{1+\Ex{D^\prc}(1-\sqrt{\pi})}.
		\end{aligned}
		\end{equation}
				By~\eqref{eq:n+s} and~\eqref{eq:ntild}
		\begin{equation}
		\begin{aligned}[b]
		\frac{1}{\tilde{n}}\sum_{i\in[\tilde{n}]}\tilde{s}_i^\prcn &=\frac{1}{\tilde{n}}\left(\sum_{i\in[\bar{n}]}s_i^\prcn+\sum_{i=\bar{n}+1}^{\tilde{n}}\tilde{s}_i^\prcn\right)
		=\frac{\bar{n}}{\tilde{n}}\Ex{S_n^\prcn}+\frac{\sum_H s_Hn_{H+}}{\tilde{n}}\\
		&=\frac{\Ex{S^\prcn_n}+(1-\sqrt{\pi_n})\Ex{D^\prcn_nS^\prcn_n}}{1+\Ex{D^\prcn_n}(1-\sqrt{\pi_n})}+o(1),
		\end{aligned}
		\end{equation}
		so that $\Ex{\tilde{S}^\prcn_n}\to\Ex{\tilde{S}^{\prc}}$. Furthermore,
		\begin{equation}\label{eq:kstilde}
		\begin{aligned}[b]
		\frac{1}{\tilde{n}}\sum_{i\in[\tilde{n}]} \tilde{s}_i^\prcn \tilde{d}_i^\prcn=\frac{1}{\tilde{n}}\sum_{i\in[\bar{n}]}s_i^\prcn {d}_i^\prcn,
		\end{aligned}
		\end{equation}
		and therefore the combined moment also converges, and Condition~\ref{cond:graph} is satisfied.
		
		To prove Condition~\ref{cond:nu}, note that
		\begin{equation}
		\begin{aligned}[b]
			\nu_{\tilde{D}_n^\prcn}&=\frac{\sum_{i\in[\tilde{n}]}\tilde{d}_i^\prcn(\tilde{d}_i^\prcn -1)}{\sum_{i\in[\tilde{n}]}\tilde{d}_i^\prcn}
			=\frac{\pi_n\sum_{i\in[\bar{n}]}d_i^\prcn(d_i^\prcn -1)+o(n^{2/3})}{\sum_{i\in[\bar{n}]}d_i^\prcn}\\
			&= \pi_n\nu_{D_n^\prcn}+o(n^{-1/3})=1+\lambda n^{-1/3}+o(n^{-1/3}),
		\end{aligned}
		\end{equation}
		where the second equality follows from~\cite[equation (7.2)]{dhara2016}.
\end{proof}

\begin{remark}
	In Lemma~\ref{lem:perccond}, we have assumed that the HCM satisfies Conditions~\ref{cond:graph} and~\ref{cond:size}~\ref{cond:ED} and~\ref{cond:PD} before percolation. Then, we have shown that $S_n^\prc$ and $D_n^\prc$ also satisfy these conditions. However, it is also possible to assume from the start that $S_n^\prc$ and $D_n^\prc$ satisfy these conditions. This means for example that the inter-community degrees only need to have finite third moment after percolating inside the communities, they may have an infinite third moment before percolating inside the communities. We gave an example of such a community in Remark~\ref{rem:perc}. 
\end{remark}

\begin{proof}[Proof of Theorem~\ref{thm:prc}]
After explosion, the HCM satisfies the assumptions of Theorem~\ref{thm:crit} by Lemma~\ref{lem:perccond}. Then Theorem~\ref{thm:crit} gives the component sizes of the exploded HCM. To obtain the sizes of the components of the percolated HCM, we need to know how many vertices are deleted in the last step of Algorithm~\ref{alg:perc}. We denote the number of deleted vertices from component $\mathscr{C}_j$ by $v^d(\mathscr{C}_j)$.
 If a community of size $s$ is deleted, $s$ vertices are deleted. Thus, this number can be written as
 \begin{equation}
 v^d(\tilde{\mathscr{C}}_{(j)})=\sum_{i=1}^{v^\hie(\tilde{\mathscr{C}}_{(j)})}s_{H_i}\mathbbm{1}_{\{H_i\text{ is deleted}\}}.
 \end{equation}
 Let $\tilde{n}_{H,1}$ denote the number of communities of shape $H$ and inter-community degree one.
 Using~\cite[Proposition 29]{dhara2016} with $\alpha=2/3$ and $f_n(i)$ as the indicator function that community $i$ is of shape $H$ and has inter-community degree one, we can show that the number of communities of shape $H$ with inter-community degree one in a component $\mathscr{C}_{(j)}$ satisfies
 \begin{equation}
 v_{H,1}^\hie(\mathscr{C}_{(j)})=v^\hie(\mathscr{C}_{(j)})\frac{\tilde{n}_{H,1}}{\tilde{n}\Ex{\tilde{D}^\prcn}}+o_{\Prob_\pi}\left(n^{1/3}\frac{\tilde{n}_{H,1}}{\sum_H\tilde{n}_{H,1}}\right).
 \end{equation}
Therefore, the number of vertices in communities of shape $H$ with inter-community degree one in a component $\mathscr{C}_{(j)}$ satisfies
\begin{equation}
v_{H,1}(\mathscr{C}_{(j)})=v^\hie(\mathscr{C}_{(j)})\frac{s_H\tilde{n}_{H,1}}{\tilde{n}\Ex{\tilde{D}^\prcn}}+o_{\Prob_\pi}\left(n^{1/3}\frac{s_H\tilde{n}_{H,1}}{\sum_H\tilde{n}_{H,1}}\right).
\end{equation}
A fraction of $n_{H+}/\tilde{n}_{H,1}$ communities of shape $H$ with outside degree one is removed uniformly. Therefore, for $j$ fixed,
 \begin{equation}\label{eq:+1}
\begin{aligned}
 v^d(\tilde{\mathscr{C}}_{(j)})&=v^\hie(\tilde{\mathscr{C}}_{(j)})\frac{\sum_H s_Hn_{H+}}{\tilde{n}\Ex{\tilde{D}^\prcn}}+o_{\Prob_\pi}\left(n^{1/3}\frac{\sum_Hs_H\tilde{n}_{+H}}{\sum_H\tilde{n}_{H,1}}\right)+o_{\Prob_\pi}\left(\sum_H {n}_{+H}\right)\\
 &=v^\hie(\tilde{\mathscr{C}}_{(j)})\frac{\sum_H s_Hn_{H+}}{\tilde{n}\Ex{\tilde{D}^\prcn}}+o_{\Prob_\pi}(n^{1/3})+o_{\Prob_\pi}(n^{2/3}),
\end{aligned}
 \end{equation}
 where we used~\eqref{eq:n+s} and the fact that $\tilde{n}_{H,1}\geq n_{H+}$.
Thus, by~\eqref{eq:+1},~\eqref{eq:n+s},~\eqref{eq:kstilde} and~\eqref{eq:critsize},
\begin{equation}
\begin{aligned}[b]
v^d(\tilde{\mathscr{C}}_{(j)})&=v^\hie(\tilde{\mathscr{C}}_{(j)})\frac{\sum_H s_H{n}_{+H}}{\tilde{n}\Ex{\tilde{D}^\prcn}}+o_{\Prob_\pi}(n^{2/3})\\
&= \frac{(1-\sqrt{\pi_n})\sum_{H}d_Hs_H \bar{n}_H}{\tilde{n}\Ex{\tilde{D}^\prcn}}v^\hie(\tilde{\mathscr{C}}_{(j)})+o_{\Prob_\pi}(n^{2/3})\\
&= (1-\sqrt{\pi_n})\frac{\sum_{k,s} k s \bar{n}_{k,s}}{\tilde{n}\Ex{\tilde{D}^\prcn}}v^\hie(\tilde{\mathscr{C}}_{(j)})+o_{\Prob_\pi}(n^{2/3})\\
&=(1-\sqrt{\pi_n})\frac{\sum_{k,s}ks\tilde{n}_{k,s}}{\tilde{n}\Ex{\tilde{D}^\prcn}}v^\hie(\tilde{\mathscr{C}}_{(j)})+o_{\Prob_\pi}(n^{2/3})\\
&=(1-\sqrt{\pi_n})\frac{\Ex{\tilde{D}^\prcn \tilde{S}^\prcn}}{\Ex{\tilde{D}^\prcn}}v^\hie(\tilde{\mathscr{C}}_{(j)})+o_{\Prob_\pi}(n^{2/3})\\
&=(1-\sqrt{\pi_n})v(\tilde{\mathscr{C}}_{(j)})+o_{\Prob_\pi}(n^{2/3}).
\end{aligned}
\end{equation}
Then, Theorem~\ref{thm:crit} gives
\begin{equation}
\tilde{n}^{-2/3}(v(\mathscr{C}_{(j)}))_{j\geq 1}\indist \frac{\Ex{\tilde{D}^\prc\tilde{S}^\prc}}{\Ex{\tilde{S}^\prc}}\sqrt{\pi}\tilde{\boldsymbol{\gamma}}^\lambda.
\end{equation}
Noting that $N=\sum_{i=1}^{\tilde{n}}\tilde{s}^\prcn_i$ leads to
\begin{equation}
\frac{N}{\bar{n}}\inprob \Ex{\tilde{S}^\prc},
\end{equation}
so that~\eqref{eq:sizeperc} follows.
\end{proof}

\subsection{The critical window.}\label{sec:exppi}
Equation~\eqref{eq:pic} gives an implicit equation for the critical window. We want to know whether it is possible to write~\eqref{eq:pic} in the form
\begin{equation}\label{eq:pila}
\pi_n(\lambda)=\pi_n(0)\left(1+\frac{\lambda c^*}{n^{1/3}}\right)+o(n^{-1/3}),
\end{equation}
for some $c^*\in\mathbb{R}$, so that the width of the critical window in the hierarchical configuration model is similar to the width of the critical window in the configuration model.

Since $g(H,v,k,\pi_n(\lambda))$ is not necessarily increasing in $\lambda$, we rewrite~\eqref{eq:pic} as
\begin{equation}\label{eq:picin}
\pi_n(\lambda)=\frac{\mathbb{E}[D_n]}{\sum_{H}P_n(H)\sum_{v\in V_H}d_v^{\sss{(b)}}\sum_{k=1}^{D_H-1}B(H,v,k+1,\pi_n(\lambda))}\left(1+\frac{\lambda}{n^{1/3}}\right),
\end{equation}
where $B(H,v,k,\pi_n(\lambda))$ is the probability that after percolating $H$ with probability $\pi_n(\lambda)$, the connected component of $v$ contains at least $k$ inter-community half-edges, which is increasing in $\lambda$.

\begin{lemma}\label{lem:exppic}
	For a hierarchical configuration model satisfying \textup{Condition}~\ref{cond:perc} and $\lim_{n\to\infty}\Ex{D_n^2S_n}=\Ex{D^2S}<\infty$,
	\begin{equation}\label{eq:critex}
	\pi_n(\lambda)=\pi_n(0)\left(1+\frac{\lambda c^*}{n^{1/3}}\right)+o(\lambda n^{-1/3}),
	\end{equation}
	where
	\begin{equation}\label{eq:lstar}
	c^*= \frac{\Ex{D}}{\Ex{D}+\pi^2\sum_HP(H)\sum_v d_v^{\sss{(b)}}\sum_k \frac{d}{dp}B(H,v,k+1,p)_{p=\pi}}.
	\end{equation}
\end{lemma}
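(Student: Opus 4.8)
The plan is to treat equation~\eqref{eq:picin} as a fixed-point equation in $\pi_n(\lambda)$ and expand it around the critical point $\pi_n(0)$ to first order in $\lambda n^{-1/3}$. Write the denominator of~\eqref{eq:picin} as $\Phi_n(p) := \sum_H P_n(H)\sum_{v\in V_H}d_v^{\sss{(b)}}\sum_{k=1}^{D_H-1}B(H,v,k+1,p)$, so that~\eqref{eq:picin} reads $\pi_n(\lambda) = \frac{\Ex{D_n}}{\Phi_n(\pi_n(\lambda))}(1+\lambda n^{-1/3})$. At $\lambda=0$ this gives $\pi_n(0)\Phi_n(\pi_n(0)) = \Ex{D_n}$, i.e.\ $\pi_n(0)$ is the solution of $p\,\Phi_n(p)=\Ex{D_n}$; this matches $\pi=1/\nu_{D^\prc}$ in the limit. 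I would first record that $\pi_n(\lambda)\to\pi_n(0)$ as $\lambda\to0$ (continuity and monotonicity of $p\mapsto p\Phi_n(p)$, already noted in the excerpt's remark), so the expansion is legitimate.

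Next I would differentiate implicitly. Set $F_n(p,\lambda) := p\,\Phi_n(p) - \Ex{D_n}(1+\lambda n^{-1/3})$, so $\pi_n(\lambda)$ solves $F_n=0$. Then
\begin{equation}
\frac{d\pi_n}{d\lambda}\Big|_{\lambda=0} = -\frac{\partial_\lambda F_n}{\partial_p F_n}\Big|_{(\pi_n(0),0)} = \frac{\Ex{D_n} n^{-1/3}}{\Phi_n(\pi_n(0)) + \pi_n(0)\Phi_n'(\pi_n(0))}.
\end{equation}
Using $\Phi_n(\pi_n(0)) = \Ex{D_n}/\pi_n(0)$ in the denominator gives
\begin{equation}
\frac{d\pi_n}{d\lambda}\Big|_{\lambda=0} = \frac{\pi_n(0)\,n^{-1/3}\,\Ex{D_n}}{\Ex{D_n} + \pi_n(0)^2\,\Phi_n'(\pi_n(0))},
\end{equation}
and since $\Phi_n'(p) = \sum_H P_n(H)\sum_v d_v^{\sss{(b)}}\sum_k \frac{d}{dp}B(H,v,k+1,p)$, a first-order Taylor expansion $\pi_n(\lambda) = \pi_n(0) + \lambda\,\frac{d\pi_n}{d\lambda}|_{\lambda=0} + o(\lambda n^{-1/3})$ yields exactly~\eqref{eq:critex} with $c^*$ as in~\eqref{eq:lstar}, after replacing $\pi_n(0)$, $\Ex{D_n}$ and $\Phi_n'(\pi_n(0))$ by their limits $\pi$, $\Ex{D}$ and $\sum_H P(H)\sum_v d_v^{\sss{(b)}}\sum_k \frac{d}{dp}B(H,v,k+1,p)|_{p=\pi}$.

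The main obstacle is justifying that this limiting passage is valid: I need the convergence $\Phi_n(p)\to\Phi(p)$ and $\Phi_n'(p)\to\Phi'(p)$ uniformly near $p=\pi$, and that the remainder term in the Taylor expansion is genuinely $o(\lambda n^{-1/3})$ uniformly in $n$. This is where the extra hypothesis $\Ex{D_n^2 S_n}\to\Ex{D^2 S}<\infty$ enters: the derivative $\frac{d}{dp}B(H,v,k+1,p)$ — a polynomial in $p$ coming from differentiating the percolation law of a finite community — is bounded by $C\, d_v^{\sss{(b)}} d_H$ (the community has at most $d_H$ ways to keep an edge that affects the count, and $B$ involves edge-retention probabilities), so $\Phi_n'(p) \le C \sum_H P_n(H)\sum_v d_v^{\sss{(b)}} d_H = C\,\Ex{D_n^2}$, and the weighted analogue needed to control error terms uniformly is $\Ex{D_n^2 S_n}$. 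I would use the General Lebesgue Dominated Convergence Theorem (as in the proof of Lemma~\ref{lem:perccond}) with this domination to pass $\Phi_n,\Phi_n'\to\Phi,\Phi'$, and a second-derivative bound $|\Phi_n''|\le C\Ex{D_n^3}<\infty$ (finite by Condition~\ref{cond:size}\ref{cond:ED}) to control the Taylor remainder, giving the $o(\lambda n^{-1/3})$ claim. Finally $c^*\le1$ follows because $\Phi_n'(p)\ge0$ (each $B$ is increasing in $p$), so the denominator in~\eqref{eq:lstar} is at least $\Ex{D}$.
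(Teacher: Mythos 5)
Your overall strategy---treating \eqref{eq:picin} as a fixed-point equation, expanding to first order in $\lambda n^{-1/3}$, and passing to the limit by dominated convergence---is the same as the paper's, and your implicit-differentiation algebra is correct: writing $L_n(p)=\Ex{D_n}/\Phi_n(p)$, the paper's $\pi_n'(0)=L_n(\pi_n(0))\,n^{-1/3}\bigl(1-L_n'(\pi_n(0))\bigr)^{-1}$ reduces to exactly your expression $\pi_n(0)n^{-1/3}\Ex{D_n}\bigl(\Ex{D_n}+\pi_n(0)^2\Phi_n'(\pi_n(0))\bigr)^{-1}$, which in the limit gives $\pi c^*$. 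The difference in execution is that the paper applies the mean value theorem to $\lambda\mapsto\pi_n(\lambda)$ on $[0,\lambda]$, so it only ever needs to control $L_n'(\pi_n(\lambda^*))$ at an intermediate point, whereas you propose a Taylor expansion with a remainder controlled by a second derivative. That is where your argument has a genuine gap: your claimed bound $\abs{\Phi_n''}\leq C\,\Ex{D_n^3}$ is unsupported (second derivatives of percolation probabilities have no such direct Russo-type bound), and it is not needed if you use the Lagrange form of the expansion as the paper does.

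The more serious gap is in the domination used to pass $\Phi_n'\to\Phi'$. Your proposed bound $\frac{d}{dp}B(H,v,k+1,p)\leq C\,d_v^{\sss{(b)}}d_H$ is not justified: Russo's formula expresses $B'$ as a sum over \emph{all} edges of $H$ of pivotality probabilities, and a community may have order $s_H^2$ edges, so counting half-edges gives no control. Worse, if your bound were used, summing over $v$ and over $k=1,\dots,d_H-1$ would produce a quantity of order $d_H^2\sum_v (d_v^{\sss{(b)}})^2$, requiring a fourth moment of $D_n$ that is not assumed---it would not produce the hypothesis $\Ex{D_n^2S_n}\to\Ex{D^2S}<\infty$ at all. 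The paper's key observation, which you are missing, is that
\begin{equation}
B'(H,v,k,p)=\frac{1}{p}\,\mathbb{E}_p[\#\text{ pivotal, present edges for }\mathcal{E}(H,v,k)]\leq \frac{s_H-1}{p},
\end{equation}
because more than $s_H-1$ simultaneously present pivotal edges would contain a cycle, and a cycle edge cannot be pivotal. Summing then gives $\abs{L_n'}\leq \sum_H P_n(H)d_H^2 s_H/(\Ex{D_n}\pi_n(\lambda))$, which is precisely the $\Ex{D_n^2S_n}$ domination that makes the General Lebesgue Dominated Convergence Theorem applicable and explains the extra hypothesis in the lemma. Without this (or an equivalent) bound, your limiting passage is not justified.
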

\begin{remark}
Equation~\eqref{eq:lstar} shows that $c^*\leq 1$, so that the critical window of a HCM is smaller than the critical window of a CM where no communities are inserted. Here $\frac{d}{dp}B(H,v,k,p)_{p=\pi}$ captures how vulnerable community $H$ is to percolation inside the community. The larger $\frac{d}{dp}B(H,v,k,p)_{p=\pi}$ will be, the larger the difference between $\lambda$ and $\lambda c^*$ will be. Intuitively, when $\frac{d}{dp}B(H,v,k,p)_{p=\pi}$ is small, this indicates that changing the percolation probability changes the degrees of the percolated communities very little. Therefore, the critical behavior is almost entirely explained by the macroscopic CM in that case. On the other hand, when $\frac{d}{dp}B(H,v,k,p)_{p=\pi}$ is large, increasing the percolation probability by a small amount increases the degrees of the communities by a lot. Then $\lambda c^*$ may be much smaller than $\lambda$.
\end{remark}

\begin{proof}
We can write~\eqref{eq:picin} as
\begin{equation}
\pi_n(\lambda)=L_n(\pi_n(\lambda))\left(1+\frac{\lambda}{n^{1/3}}\right),
\end{equation}
where
\begin{equation}
L_n(\pi_n(\lambda))=\frac{\mathbb{E}[D_n]}{\sum_{H}P_n(H)\sum_{v\in V_H}d_v^{\sss{(b)}}\sum_{k=1}^{D_H-1}B(H,v,k+1,\pi_n(\lambda))}.
\end{equation}
Calculating the derivative gives
\begin{equation}
\pi_n'(\lambda)=\frac{L_n(\pi_n(\lambda))}{n^{1/3}(1-L_n'(\pi_n(\lambda))(1+\lambda/n^{1/3}))}
\end{equation}
Then, by the mean value theorem, there exists $\lambda^*\in[0,\lambda]$ such that
\begin{equation}\label{eq:mv}
\begin{aligned}[b]
\pi_n(\lambda)
&=\pi_n(0)+\frac{\lambda}{n^{1/3}}\frac{L_n(\pi_n(\lambda^*))}{1-L_n'(\pi_n(\lambda^*))(1+\lambda^*/n^{1/3})}.
\end{aligned}
\end{equation}
Since $B(H,v,k,\pi)$ is the probability of an increasing event, $L_n(\pi_n(\lambda))$ is continuous. Calculating the derivative of $L_n(\pi_n(\lambda))$ gives
\begin{equation}
L_n'(\pi_n(\lambda))=-\frac{\mathbb{E}[D_n]\sum_{H}P_n(H)\sum_vd_v^{\sss{(b)}}\sum_kB'(H,v,k+1,\pi_n(\lambda))}{\left(\sum_HP_n(H)\sum_vd_v^{\sss{(b)}}\sum_kB(H,v,k+1,\pi_n(\lambda))\right)^2},
\end{equation}
where we denoted
\begin{equation}
B'(H,v,k,\pi) = \frac{d}{dp}B(H,v,k,p)_{p=\pi}.
\end{equation}
Since $B$ is an increasing function of the percolation parameter $p$, $L_n'(\pi_n(\lambda))\leq 0$.
 By~\cite[Thm. 9]{hofstad2015}
\begin{equation}\label{eq:nun}
\nu_{D^\prcn}^{\sss{(n)}}=\frac{\sum_{H}P_n(H)\sum_{v\in V_H}d_v^{\sss{(b)}}\sum_{k=1}^{D_H-1}B(H,v,k+1,\pi_n)}{\mathbb{E}[D_n]},
\end{equation}
Therefore, by~\eqref{eq:condin} and~\eqref{eq:nun},
\begin{equation}
\begin{aligned}[b]
\abs{L'(\pi_n(\lambda))}&=\frac{\mathbb{E}[D_n]\sum_{H}P_n(H)\sum_vd_v^{\sss{(b)}}\sum_kB'(H,v,k+1,\pi_n(\lambda))}{\left(\sum_HP_n(H)\sum_vd_v^{\sss{(b)}}\sum_kB(H,v,k+1,\pi_n(\lambda))\right)^2}\\
&\leq \frac{\sum_{H}P_n(H)\sum_vd_v^{\sss{(b)}}\sum_kB'(H,v,k+1,\pi_n(\lambda))}{\mathbb{E}[D_n]}.
\end{aligned}
\end{equation}
Hence, we need to bound $B'(H,v,k,\pi_n(\lambda))$. The event that $v$ is connected to at least $k$ half-edges is increasing in $\lambda$. Let $\mathcal{E}(H,v,k)$ denote the event that vertex $v$ is connected to at least $k$ half-edges of community $H$. An edge is pivotal for $\mathcal{E}(H,v,k)$ in a certain configuration, if the event occurs if the edge is present, and the event does not occur if the edge is not present. Then, by Russo's formula~\cite{russo1981},
\begin{equation}
\begin{aligned}[b]
B'(H,v,k,\pi_n(\lambda))&=\sum_{e\in H}\Prob_{\pi_n(\lambda)}(e\text{ pivotal for }\mathcal{E}(H,v,k+1))\\
&=\frac{1}{\pi_n(\lambda)}\sum_{e\in H}\Prob_{\pi_n(\lambda)}(e\text{ present and pivotal for }\mathcal{E}(H,v,k+1))\\
&=\frac{1}{\pi_n(\lambda)}\mathbb{E}_{\pi_n(\lambda)}[\text{\# pivotal, present edges for }\mathcal{E}(H,v,k+1)]\\
&\leq\frac{1}{\pi_n(\lambda)}(S_H-1),
\end{aligned}
\end{equation}
because at most $S_H-1$ pivotal edges can be present in a community, since otherwise, they would form a cycle. Therefore,
\begin{equation}\label{eq:boundgp}
\begin{aligned}[b]
\abs{L_n'(\pi_n(\lambda))}\leq \frac{\sum_{H}P_n(H)\sum_vd_v^{\sss{(b)}}\sum_{k=1}^{d_H-1}(s_H-1)}{\mathbb{E}[D_n]\pi_n(\lambda)}\leq \frac{\sum_{H}P_n(H)d_H^2s_H}{\mathbb{E}[D_n]\pi_n(\lambda)}.
\end{aligned}
\end{equation}

Since $\Ex{D_n^2S_n}\to \Ex{D^2S}$, we can use the General Lebesgue Dominated Convergence Theorem to conclude that
\begin{equation}
\begin{aligned}[b]
\lim_{n\to\infty} L_n'(\pi_n(\lambda^*))&=-\frac{\Ex{D}\sum_H P(H)\sum_vd_v^{\sss{(b)}}\sum_k B'(H,v,k+1,\pi)}{\left(\sum_HP(H)\sum_vd_v^{\sss{(b)}}\sum_kB(H,v,k+1,\pi)\right)^2}\\
&=-\frac{\pi^2\sum_H P(H)\sum_vd_v^{\sss{(b)}}\sum_k B'(H,v,k+1,\pi)}{\Ex{D}},
\end{aligned}
\end{equation}
where the last inequality follows from~\eqref{eq:nun} and~\eqref{eq:pic}.
Furthermore, we can use the General Lebesgue Dominated Convergence Theorem (see the proof of~\cite[Thm. 9]{hofstad2015}) to conclude that
\begin{equation}
\lim_{n\to\infty}L_n(\pi_n(\lambda))=L(\pi)=\pi.
\end{equation}
Inserting this into~\eqref{eq:mv} proves~\eqref{eq:critex}.
\end{proof}

\begin{example}[Star-shaped communities]
We now consider the case where all communities are star-shaped, so that every community has one vertex in the middle, connected to $l$ other vertices that all have inter-community degree one (as in Figure~\ref{fig:star}). Then~\eqref{eq:pic} becomes
\begin{equation}
\pi_n(\lambda)=\frac{1}{(l-1)\pi_n(\lambda)^2}\left(1+\frac{\lambda}{n^{1/3}}\right),
\end{equation}
or
\begin{equation}
\pi_n(\lambda)=\frac{1}{(l-1)^{1/3}}\left(1+\frac{\lambda}{n^{1/3}}\right)^{1/3}.
\end{equation}
A first order Taylor approximation then gives
\begin{equation}
\begin{aligned}[b]
\pi_n(\lambda)&=\frac{1}{(l-1)^{1/3}}+\frac{\lambda}{3n^{1/3}(l-1)^{1/3}}+O(n^{-2/3})\\
&=\frac{1}{(l-1)^{1/3}}\left(1+\frac{\lambda}{3n^{1/3}}\right)+O(n^{-2/3}),
\end{aligned}
\end{equation}
so that $c^*=1/3$, which is the same result that is obtained when computing~\eqref{eq:critex}.
Table~\ref{tab:piappstar} compares the approximation of~\eqref{eq:critex} with the exact values of $\pi_n(\lambda)$.
\begin{table}[htbp]
		\centering
		\begin{tabular}{rrrrr}
			\toprule
			& \multicolumn{2}{c}{$n=10^5$} & \multicolumn{2}{c}{$n=10^6$} \\
			$\lambda$ & $\pi_n(\lambda)$ & $\pi_n(\lambda)$ appr. & $\pi_n(\lambda)$ & $\pi_n(\lambda)$ appr.\\
			\midrule
		-10   & 0,581 & 0,585 & 0,608 & 0,609 \\
		-1    & 0,625 & 0,625 & 0,628 & 0,628 \\
		0 & 0,630 & 0,630 & 0,630 & 0,630\\
		1     & 0,634 & 0,634 & 0,632 & 0,632 \\
		10    & 0,672 & 0,675 & 0,650 & 0,651 \\
		\bottomrule
	\end{tabular}%
	\caption{Values of $\pi_n(\lambda)$ for star-shaped communities with 5 end points, and the approximation by~\eqref{eq:critex}.}
	\label{tab:piappstar}%
\end{table}%
\end{example}

\begin{figure}[tb]
	\centering
	\begin{minipage}[b]{0.31\linewidth}
		\centering
		\includegraphics[width=0.5\textwidth]{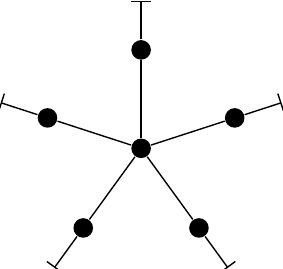}
		\caption{A star-shaped community}
		\label{fig:star}
	\end{minipage}
	\hspace{0.2cm}
	\begin{minipage}[b]{0.31\linewidth}
		\centering
		\includegraphics[width=0.8\textwidth]{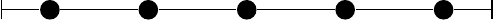}
		\caption{A line community of length 5}
		\label{fig:line}
	\end{minipage}
\end{figure}

\begin{example}[Line communities]
	We now consider the case where all communities are either line communities of length 5 (as in Figure~\ref{fig:line}), or single vertices of degree 3, both with probability 1/2. Here a line community is a community that consists of a line of 5 vertices. The two vertices at the ends of the line have inter-community degree one, the other vertices have inter-community degree zero.
	It is possible to calculate~\eqref{eq:critex} analytically in this setting. Table~\ref{tab:piappr} compares this approximation with the exact values of $\pi_n(\lambda)$. We can see that the approximation is very close to the actual value of $\pi_n(\lambda)$, especially for $n$ large and $\lambda$ small. 
	\begin{table}[htbp]
		\centering
		\begin{tabular}{rrrrr}
			\toprule
			& \multicolumn{2}{c}{$n=10^5$} & \multicolumn{2}{c}{$n=10^6$} \\
			$\lambda$ & $\pi_n(\lambda)$ & $\pi_n(\lambda)$ appr. & $\pi_n(\lambda)$ & $\pi_n(\lambda)$ appr.\\
						\midrule
			-10   & 0,623 & 0,636 & 0,696 & 0,698 \\
			-1    & 0,741 & 0,741 & 0,747 & 0,747 \\
			0 & 0,753 &0,753 &0,753 &0,753\\
			1     & 0,764 & 0,764 & 0,758 & 0,758 \\
			10    & 0,858 & 0,870 & 0,804 & 0,807 \\
			\bottomrule
		\end{tabular}%
				\caption{Values of $\pi_n(\lambda)$ for line communities and single vertex communities, and the approximation by~\eqref{eq:critex}.}
		\label{tab:piappr}%
	\end{table}%
\end{example}

\section{Conclusion}
We investigated the influence of mesoscopic structures on critical component sizes by studying the hierarchical configuration model (HCM). In the HCM, the mesoscopic structures refer to community structures, and the connections between different communities are as in the configuration model. We considered the critical component sizes of the HCM when the inter-community connections have a finite third moment. These critical component sizes converge as $n\to\infty$ to a similar scaling limit as the critical component sizes in the CM, as long as the mesoscopic scales remain smaller than $n^{2/3}$. The critical component sizes of the HCM only depend on the sizes of the communities, and are independent of the precise community shapes. We also obtained an implicit critical percolation window for the HCM, that depends on both the connections between communities, as well as the connections inside communities. We found that under stricter conditions on the community sizes and the inter-community edges, the critical window can be written in an explicit form. The question whether this stricter condition is necessary to write the critical window in an explicit form remains open for further research. 

The HCM can be used to model real-world networks with a community structure. Since many real-world networks have diverging third moments of their inter-community connections~\cite{stegehuis2015}, it would be worthwhile to investigate the scaling limits of the HCM in this setting.

\paragraph*{Acknowledgement.}
This work is supported by NWO TOP grant 613.001.451 and by the NWO Gravitation Networks grant 024.002.003.
The work of RvdH is further supported by the NWO VICI grant 639.033.806.  The work of JvL is further supported by an NWO TOP-GO grant and by an ERC Starting Grant.

\bibliographystyle{abbrv}
\bibliography{../references}

\begin{thebibliography}{10}

\bibitem{aldous1997}
D.~Aldous.
\newblock Brownian excursions, critical random graphs and the multiplicative
  coalescent.
\newblock {\em The Annals of Probability}, 25(2):812--854, 1997.

\bibitem{ball2009}
F.~Ball, D.~Sirl, and P.~Trapman.
\newblock Threshold behaviour and final outcome of an epidemic on a random
  network with household structure.
\newblock {\em Adv. in Appl. Probab.}, 41(3):765--796, 09 2009.

\bibitem{bhamidi2014a}
S.~Bhamidi, N.~Broutin, S.~Sen, and X.~Wang.
\newblock Scaling limits of random graph models at criticality: Universality
  and the basin of attraction of the {E}rd\H{o}s {R}\'enyi random graph.
\newblock {\em arXiv:1411.3417}, 2014.

\bibitem{bhamidi2012}
S.~Bhamidi, R.~van~der Hofstad, and J.~S.~H. van Leeuwaarden.
\newblock Novel scaling limits for critical inhomogeneous random graphs.
\newblock {\em Ann. Probab.}, (6):2299--2361, 11.

\bibitem{bhamidi2010}
S.~Bhamidi, R.~van~der Hofstad, and J.~S.~H. van Leeuwaarden.
\newblock Scaling limits for critical inhomogeneous random graphs with finite
  third moments.
\newblock {\em Electron. J. Probab.}, 15:no. 54, 1682--1702, 2010.

\bibitem{chung2006}
F.~Chung and L.~Lu.
\newblock Concentration inequalities and martingale inequalities: a survey.
\newblock {\em Internet Mathematics}, 3(1):79--127, 2006.

\bibitem{dhara2016}
S.~Dhara, R.~{\swap{Hofstad}{van der }}, and J.~S.~H. \swap{Leeuwaarden}{van }.
\newblock Critical window for the configuration model: finite third moment
  degrees.
\newblock {\em arXiv:1605.02868}, 2016.

\bibitem{dhara2016a}
S.~Dhara, R.~van~der Hofstad, J.~S. van Leeuwaarden, and S.~Sen.
\newblock Critical window for the configuration model: infinite third moment
  degrees.
\newblock In preparation.

\bibitem{fountoulakis2007}
N.~Fountoulakis.
\newblock Percolation on sparse random graphs with given degree sequence.
\newblock {\em Internet Mathematics}, 4(4):329--356, 2007.

\bibitem{groeneboom1989}
P.~Groeneboom.
\newblock Brownian motion with a parabolic drift and airy functions.
\newblock {\em Probability Theory and Related Fields}, 81(1):79--109, 1989.

\bibitem{hofstad2009}
R.~{\swap{Hofstad}{van der }}.
\newblock Random {G}raphs and {C}omplex {N}etworks {Vol. 1}.
\newblock {\em To appear with Cambridge University Press}, 2016.

\bibitem{hofstad2010}
R.~{\swap{Hofstad}{van der }}, A.~Janssen, and J.~S.~H. van Leeuwaarden.
\newblock Critical epidemics, random graphs and brownian motion with a
  parabolic drift.
\newblock {\em Advances in Applied Probability}, 42(4):1187 -- 1206, 2010.

\bibitem{hofstad2015}
R.~{\swap{Hofstad}{van der }}, J.~S.~H. {\swap{Leeuwaarden}{van }}, and
  C.~Stegehuis.
\newblock Hierarchical configuration model.
\newblock {\em arXiv:1512.08397}, 2015.

\bibitem{janson2009b}
S.~Janson.
\newblock On percolation in random graphs with given vertex degrees.
\newblock {\em Electron. J. Probab.}, 14:86--118, 2009.

\bibitem{janson2009}
S.~Janson.
\newblock Susceptibility of random graphs with given vertex degrees.
\newblock {\em Journal of Combinatorics}, 1(4):357 -- 387, 2010.

\bibitem{janson2009a}
S.~Janson and O.~Riordan.
\newblock Susceptibility in inhomogeneous random graphs.
\newblock {\em Electronic Journal of Combinatorics}, 19(1):\# P31, 2012.

\bibitem{joseph2014}
A.~Joseph.
\newblock The component sizes of a critical random graph with given degree
  sequence.
\newblock {\em The Annals of Applied Probability}, 24(6):2560--2594, 2014.

\bibitem{nachmias2010}
A.~Nachmias and Y.~Peres.
\newblock Critical percolation on random regular graphs.
\newblock {\em Random Structures \& Algorithms}, 36(2):111--148, 2010.

\bibitem{riordan2012}
O.~Riordan.
\newblock The phase transition in the configuration model.
\newblock {\em Combinatorics, Probability and Computing}, 21(1-2):265--299,
  2012.

\bibitem{royden2010}
H.~L. Royden and P.~Fitzpatrick.
\newblock {\em Real Analysis}.
\newblock Prentice Hall, 4 edition, 2010.

\bibitem{russo1981}
L.~Russo.
\newblock On the critical percolation probabilities.
\newblock {\em Zeitschrift f{\"u}r Wahrscheinlichkeitstheorie und Verwandte
  Gebiete}, 56(2):229--237, 1981.

\bibitem{stegehuis2016}
C.~Stegehuis, R.~{\swap{Hofstad}{van der }}, and J.~S.~H.
  {\swap{Leeuwaarden}{van }}.
\newblock Epidemic spreading on complex networks with community structures.
\newblock {\em Scientific Reports}, 6:29748, 2016.

\bibitem{stegehuis2015}
C.~Stegehuis, R.~{\swap{Hofstad}{van der }}, and J.~S.~H.
  {\swap{Leeuwaarden}{van }}.
\newblock Power-law relations in random networks with communities.
\newblock {\em Phys. Rev. E}, 94:012302, Jul 2016.

\bibitem{turova2009}
T.~S. Turova.
\newblock Diffusion approximation for the components in critical inhomogeneous
  random graphs of rank 1.
\newblock {\em arXiv:0907.0897}, 2009.

\end{thebibliography}
\end{document}